\title{Abelian groups definable in $p$-adically closed fields}
\author{Will Johnson and  Ningyuan Yao}
\DeclareMathOperator*{\ind}{\raise0.2ex\hbox{\ooalign{\hidewidth$\vert$\hidewidth\cr\raise-0.9ex\hbox{$\smile$}}}}
\newcommand{\dimind}{\ind^{\dim}}
\newcommand{\ba}{\bar{a}}
\newcommand{\bb}{\bar{b}}
\newcommand{\bx}{\bar{x}}
\newcommand{\by}{\bar{y}}
\newcommand{\pCF}{p\mathrm{CF}}
\newcommand{\Aut}{\operatorname{Aut}}
\newcommand{\acl}{\operatorname{acl}}
\newcommand{\dcl}{\operatorname{dcl}}
\newcommand{\tp}{\operatorname{tp}}
\DeclareMathOperator{\stab}{stab}
\DeclareMathOperator{\st}{st}
\newcommand{\trdeg}{\operatorname{tr.deg}}
\newcommand{\dpr}{\operatorname{dp-rk}}
\newcommand{\Q}{\mathbb{Q}}
\newcommand{\Qp}{\mathbb{Q}_p}
\newtheorem{theorem}{Theorem}[section] % numbered like the section
\newtheorem{lemma}[theorem]{Lemma}
\newtheorem{corollary}[theorem]{Corollary}
\newtheorem{fact}[theorem]{Fact}
\newtheorem{conjecture}[theorem]{Conjecture}
\newtheorem{proposition}[theorem]{Proposition}
\newtheorem{proposition-eh}[theorem]{Proposition(?)}
\newtheorem*{theorem-star}{Theorem}
\newtheorem*{conjecture-star}{Conjecture}
\newtheorem*{lemma-star}{Lemma}
\theoremstyle{definition}
\newtheorem{definition}[theorem]{Definition}
\newtheorem{remark}[theorem]{Remark}
\newtheorem*{acknowledgment}{Acknowledgments}
\newcommand{\Qq}{\mathbb{Q}}
\newcommand{\eq}{\mathrm{eq}}
\newcommand{\Zz}{\mathbb{Z}}
\newcommand{\Mm}{\mathbb{M}}
\newcommand{\Pp}{\mathbb{P}}
\newcommand{\sq}{\subseteq}
\begin{document}
\maketitle

\begin{abstract}
Recall that a group $G$ has finitely satisfiable generics (\textit{fsg}) or definable $f$-generics (\textit{dfg}) if there is a global type $p$ on $G$ and a small model $M_0$ such that every left translate of $p$ is finitely satisfiable in $M_0$ or definable over $M_0$, respectively.
We show that any abelian group definable in a $p$-adically closed field is an extension of a definably compact \textit{fsg} definable group by a \textit{dfg} definable group.  We discuss an approach which might prove a similar statement for interpretable abelian groups.  In the case where $G$ is an abelian group definable in the standard model $\Qp$, we show that $G^0 = G^{00}$, and that $G$ is an open subgroup of an algebraic group, up to finite factors.  This latter result can be seen as a rough classification of abelian definable groups in $\Qp$.
\end{abstract}

\section{Introduction}
In this paper we study abelian groups definable in $p$-adically closed
fields.  Recall that a definable group $G$ has \emph{finitely
satisfiable generics} (\textit{fsg}) if there is a global type on $G$,
finitely satisfiable in a small model, with boundedly many left
translates.  Similarly, $G$ has \emph{definable f-generics}
(\textit{dfg}) if there is a definable global type on $G$ with
boundedly many left translates.
  The main theorem of this paper is the following decomposition
of abelian definable groups into \textit{dfg} and \textit{fsg} components:
\begin{theorem} \label{th1}
  Suppose that $M$ is a $p$-adically closed field and $G$ is an
  abelian group definable in $M$.  Then there is a short exact
  sequence of definable groups
  \begin{equation*}
    1 \to H \to G \to C \to 1
  \end{equation*}
  where $H$ has \textit{dfg} and $C$ is definably compact and has
  \textit{fsg}.
\end{theorem}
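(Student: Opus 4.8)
The plan is to split off a ``\textit{dfg} part'' of $G$ as a definable subgroup $H$ and show that the quotient $G/H$ is definably compact. First I would reduce the clause ``$C$ has \textit{fsg}'' to ``$C$ is definably compact'': recall that in a $p$-adically closed field a definable group has \textit{fsg} as soon as it is definably compact, so it suffices to produce a normal definable subgroup $H \trianglelefteq G$ (normality being automatic since $G$ is abelian) with $H$ having \textit{dfg} and $G/H$ definably compact. My candidate for $H$ is the largest \textit{dfg} definable subgroup of $G$ (this mirrors the o-minimal situation, where one takes the maximal normal torsion-free definable subgroup). To see that a largest such subgroup exists I would use three soft facts about \textit{dfg} for definably amenable NIP groups: it is preserved under definable quotients, under finite products, and under extensions. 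The first two give that the sum $H_1 + H_2$ of two \textit{dfg} definable subgroups is again \textit{dfg} (it is a definable quotient of $H_1 \times H_2$), so the \textit{dfg} definable subgroups of $G$ form an upward-directed family; combined with a bound on the length of strictly increasing chains of \textit{dfg} subgroups (coming from the dimension theory --- a point that needs checking, since $p$-adically closed fields do possess infinite chains of definable subgroups), this directed family has a greatest element $H$.

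With $H$ fixed, I claim $C := G/H$ is definably compact. Suppose not. Here I would invoke a $p$-adic analogue of the Peterzil--Steinhorn theorem: an abelian group, definable (or interpretable) in a $p$-adically closed field and not definably compact, contains an infinite definable subgroup which has \textit{dfg}. Morally, a definable ``arc going off to infinity'' in the group can, using commutativity, be straightened into a one-dimensional definable subgroup, definably isomorphic to an open subgroup of $(M,+)$ or to the value group $(\mathbb{Z},+)$, and groups of these two shapes have \textit{dfg}. Applying this to $C$ gives an infinite \textit{dfg} definable subgroup $\bar H' \le C$; let $H'$ be its preimage in $G$. Then $H'$ is an extension of the \textit{dfg} group $H$ by the \textit{dfg} group $\bar H'$, hence has \textit{dfg}, while $H' \supsetneq H$ --- contradicting the maximality of $H$. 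Therefore $C$ is definably compact, and the short exact sequence $1 \to H \to G \to C \to 1$ does the job, $C$ acquiring \textit{fsg} from its definable compactness.

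The hard part will be the $p$-adic Peterzil--Steinhorn statement used in the second paragraph: this is where one must actually exploit the structure theory of groups definable in $p$-adically closed fields --- that such a group carries a definable local $p$-adic manifold structure and is locally governed, near the identity, by an algebraic (Lie-theoretic) picture with a well-behaved dimension --- together with a usable criterion for definable non-compactness (a definable curve with no limit point inside the group), and then carry out the genuinely delicate step of extracting an honest definable subgroup, rather than a mere definable subset or sub-quotient, from such a curve. A secondary difficulty is that the statement has to be applied to $C = G/H$, which a priori is only interpretable, and for genuinely interpretable groups the manifold/algebraic structure underlying the Peterzil--Steinhorn argument is not directly available; interpretable abelian groups also exhibit extra phenomena --- for instance $\Qp/\mathbb{Z}_p$-type groups, which are locally finite, definably \textit{dfg} (being quotients of $(M,+)$), yet not definably compact --- so some care is needed to see such groups cannot obstruct the argument. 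This is precisely the sort of issue that makes the analogue of Theorem~\ref{th1} for all interpretable abelian groups harder, as the abstract anticipates. A final, more routine point to nail down is the chain condition guaranteeing that a largest \textit{dfg} definable subgroup exists.
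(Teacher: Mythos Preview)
Your overall strategy is the paper's: take a maximal \textit{dfg} subgroup, apply a $p$-adic Peterzil--Steinhorn theorem to the quotient, and use closure of \textit{dfg} under extensions to contradict maximality. The Peterzil--Steinhorn input you describe is not new work here --- the paper simply cites \cite[Corollary~6.11]{johnson-yao}, which already produces a one-dimensional \textit{dfg} definable subgroup inside any non-definably-compact abelian \emph{definable} group. The reduction of \textit{fsg} to definable compactness and the extension lemma for \textit{dfg} are likewise available off the shelf (the latter is Lemma~\ref{dfg-ext}).

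The genuine gap is precisely your ``secondary difficulty'', and it is not secondary: the cited Peterzil--Steinhorn result applies only to \emph{definable} groups, so one must ensure that $G/H$ stays definable throughout the induction. The paper does not solve this by proving an interpretable Peterzil--Steinhorn (Section~\ref{sec5} attempts that route and leaves termination open as Conjecture~\ref{conj}); instead it builds definability of the quotient into the inductive hypothesis. One calls $H$ \emph{good} if $H$ has \textit{dfg} \emph{and} $G/H$ is definable, and chooses $H$ good of maximal $\dim(H)$. The key technical ingredient is then Corollary~\ref{quots}: the quotient of a definable group by a one-dimensional \textit{dfg} normal subgroup is again definable. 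This rests on Theorem~\ref{codes}, showing that one-dimensional global definable types in $\pCF$ are coded by real (not imaginary) tuples --- just enough elimination of imaginaries for the purpose at hand. With this in place, the $H'/H$ produced by Peterzil--Steinhorn is one-dimensional, so $G/H' \cong (G/H)/(H'/H)$ is again definable and $\dim(H') = \dim(H)+1$; this simultaneously preserves goodness and dispatches your chain-condition worry, since one is simply maximizing a bounded integer rather than seeking a literal largest subgroup. Your instinct that the interpretable case is harder is exactly right: without Corollary~\ref{quots} one is exposed to the $\Qp/\Zz_p$-type phenomena you mention, and the paper leaves that case as a conjecture.
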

An analogous decomposition for definably amenable groups in o-minimal structures was proved by Conversano and Pillay \cite[Propositions~4.6--7]{cp} (see also \cite[Fact~1.18]{pillay-yao0}).  Pillay and Yao asked whether such a decomposition exists for any definably amenable group in a distal theory \cite[Question~1.19]{pillay-yao0}; Theorem~\ref{th1} can be seen as evidence towards a positive answer.

When $M = \Qp$, we obtain two useful consequences from Theorem~\ref{th1}:
\begin{theorem} \label{th2}
  Suppose that $G$ is an abelian definable group in $\Qp$.
  \begin{enumerate}
  \item $G^{00} = G^0$.
  \item There is a finite index definable subgroup $E \subseteq G$ and a finite subgroup $F \subseteq E$ such that $E/F$ is isomorphic to an open
    subgroup of an algebraic group.
  \end{enumerate}
\end{theorem}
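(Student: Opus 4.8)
The plan is to obtain both parts from Theorem~\ref{th1} together with the known structure of \textit{fsg} and \textit{dfg} groups in $\Qp$; I would prove (2) first and then deduce (1). Fix an abelian $G$ definable in $\Qp$ and let $1 \to H \to G \xrightarrow{\pi} C \to 1$ be the decomposition from Theorem~\ref{th1}, with $H$ having \textit{dfg} and $C$ definably compact with \textit{fsg}.

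\emph{Step 1: the outer terms are almost algebraic.} Since $C$ is definably compact, the $p$-adic analogue of Pillay's conjecture (Onshuus--Pillay, and refinements) shows that after passing to a definable finite-index subgroup and a finite quotient, $C$ becomes a compact $p$-adic analytic group; being abelian, such a group is an open subgroup of a commutative algebraic group over $\Qp$ --- the model cases being $\mathbb{Z}_p^n$, $\mathbb{Z}_p^\times$ and $E(\Qp)$ for an abelian variety $E$. Likewise $H$ is a $p$-adic analytic group by Pillay's theorem on groups definable in $p$-adically closed fields; it is abelian, and having \textit{dfg} it has no nontrivial definably compact definable quotient, and from this one checks that $H$ too is, up to finite index and a finite quotient, an open subgroup of a commutative algebraic group over $\Qp$ (here the model cases are $(\Qp^n,+) = \mathbb{G}_a^n(\Qp)$ and open subgroups of tori such as $\Qp^\times = \mathbb{G}_m(\Qp)$).

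\emph{Step 2: reassembling the extension.} Replacing $G$ by a finite-index subgroup and a finite quotient, we may assume $H$ and $C$ are open subgroups of commutative algebraic groups $\tilde H, \tilde C$ over $\Qp$. Pushing $1 \to H \to G \to C \to 1$ out along $H \hookrightarrow \tilde H(\Qp)$ yields a definable extension $1 \to \tilde H(\Qp) \to G_1 \to C \to 1$ in which $G$ is open, since $G_1/G \cong \tilde H(\Qp)/H$ is discrete; so it suffices to realise $G_1$ algebraically. One then wants to extend this extension along the open inclusion $C \hookrightarrow \tilde C(\Qp)$ to an extension of $\tilde C(\Qp)$ by $\tilde H(\Qp)$ containing $G_1$ as an open subgroup, and to recognise the result as $\tilde G(\Qp)$ for a commutative algebraic group $\tilde G$ --- that is, to compare definable and algebraic $\Ext^1$ of commutative algebraic groups over $\Qp$, using the explicit description of extensions of a semiabelian variety and a vector group by another such. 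I expect this to be the main obstacle: extending the extension along $C \hookrightarrow \tilde C(\Qp)$ need not be possible outright and may force passage to a further finite-index subgroup of $C$ (hence of $G$), and proving that a definable commutative extension of algebraic groups over $\Qp$ is, up to a finite-index subgroup and a finite central subgroup, algebraic is the technical heart of the argument.

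\emph{Step 3: deducing (1).} By part (2), $G$ has a finite-index definable subgroup $E$ and a finite subgroup $F \subseteq E$ with $E/F$ an open subgroup of a commutative algebraic group over $\Qp$, and such groups satisfy $G^{00} = G^0$. Since the property ``$G^{00} = G^0$'' passes both ways between a group and its definable finite-index subgroups and between a group and its finite quotients (a short computation with indices, using that $G/G^0$ and $G/G^{00}$ are compact), part (1) follows. One can also argue directly from Theorem~\ref{th1}: from $\pi(G^{00}) = C^{00}$ and $\pi^{-1}(C^{00}) = H \cdot G^{00}$ one obtains a short exact sequence $1 \to H/(H \cap G^{00}) \to G/G^{00} \to C/C^{00} \to 1$; here $C/C^{00}$ is profinite because $C/C^{00}$ is a compact $p$-adic analytic group, and $H/H^{00}$ is profinite because a group with \textit{dfg} satisfies $H^{00} = H^0$; an extension of a profinite group by a profinite group is profinite, so $G/G^{00}$ is profinite, and $G^{00} = G^0$ follows.
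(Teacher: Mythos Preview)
Your direct argument for part~(1) at the end of Step~3 is essentially the paper's proof: the short exact sequence $1 \to H/(H \cap G^{00}) \to G/G^{00} \to C/C^{00} \to 1$, profiniteness of the outer terms (from \cite{onshuus-pillay} for $C$ and from \cite{pillay-yao0} for $H$), and closure of profinite groups under extensions. That part is fine and is exactly Lemma~\ref{0-00-ext} combined with Theorem~\ref{g00}.

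For part~(2), however, your route diverges sharply from the paper's and has real gaps. The paper does \emph{not} attempt to recognise $H$ and $C$ separately as almost-algebraic and then reassemble the extension. Instead it proves~(1) first, then invokes a single black box (Fact~\ref{mos-fact}, from Montenegro--Onshuus--Simon): for any definably amenable group over $\Qp$ there is a definable finite-to-one homomorphism from $G^{00}$ into an algebraic group. Since $G^{00}=G^0$ by part~(1), this map is defined on a definable finite-index subgroup, and a short compactness argument extends it to a definable finite-index $E\subseteq G$; taking the Zariski closure of the image gives openness. No extension-splicing or $\Ext^1$ comparison is needed.

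Your Steps~1 and~2 both contain unfilled obligations. In Step~1 you pass from ``$C$ is a compact abelian $p$-adic Lie group'' to ``$C$ is (up to finite factors) \emph{definably} isomorphic to an open subgroup of an algebraic group''; Pillay's theorem gives only a Lie-group structure, and a Lie-group isomorphism to, say, $\Zz_p^n$ need not be definable---this is precisely the content supplied by the MOS result, so you are implicitly assuming the hard part. The same objection applies to your treatment of $H$. In Step~2 you yourself identify the comparison of definable and algebraic $\Ext^1$ as ``the main obstacle'' and do not carry it out; even granting Step~1, this would be substantial new work, and the paper avoids it entirely. Finally, note that the logical order is reversed: the paper uses~(1) to prove~(2), not the other way around.
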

This yields a loose ``classification'' of abelian definable groups in
$\Qp$---up to finite factors, they are exactly the open subgroups of
algebraic groups.

\begin{acknowledgment}
  The first author was supported by the National Natural Science
  Foundation of China (Grant No.\@ 12101131). The second auothor was supported by the National Social Fund of China (Grant No.\@ 20CZX050). Section~\ref{sec5} was partially based on joint work with Zhentao Zhang, who declined to be an author on this paper.  
\end{acknowledgment}

\subsection{Outline}
In Section~\ref{tools}, we review some tools needed in the proof.  In
Section~\ref{sec3} we prove the decomposition in Theorem~\ref{th1}.
In Section~\ref{qp-case} we obtain the consequences for
$\Qp$-definable groups listed in Theorem~\ref{th2}.  In
Section~\ref{sec5} we discuss our original strategy for
Theorem~\ref{th1}, which suggests a generalization of
Theorem~\ref{th1} to interpretable groups (Conjecture~\ref{conj}).

There are also two appendices.  Appendix~\ref{ict} proves a technical
statement about topological properties of ict patterns in
interpretable groups, needed in Lemma~\ref{hard}.
Appendix~\ref{exts2} is on \textit{dfg} in short exact sequences, and
generalizes some facts in Section~\ref{exts} beyond the context of
$\pCF$.

\subsection{Notation and conventions}
``Definable'' means ``definable with parameters.''
We write the monster model as $\Mm$. A ``type'' is a complete type, and a ``partial type'' is a partial type.  Tuples are finite by default.  We usually write tuples as $a, b, x, y$ rather than $\ba, \bb, \bx, \by$.  We distinguish between ``real'' elements or tuples (in $\Mm$) and ``imaginaries'' (in $\Mm^\eq$), and we distinguish between ``definable'' (in $\Mm$) and ``interpretable'' (in $\Mm^\eq$).  The exception is Appendix~\ref{exts2}, where we work in $\Mm^\eq$.  If $D$ is a definable set, then $\ulcorner D \urcorner$ denotes its code, a tuple in $\Mm^\eq$.  If $p$ is a definable type, then $\ulcorner p \urcorner$ denotes its code, an infinite tuple in $\Mm^\eq$.

Throughout, $\pCF$ means the complete theory of $\Qp$, and a ``$p$-adically closed field'' is a model of this theory, or equivalently, a field elementarily equivalent to $\Qp$.  We do not consider ``$p$-adically closed fields'' in the broader sense (fields elementarily equivalent to finite extensions of $\Qp$), though we strongly suspect that all the results generalize to these theories.
We write the language of $\pCF$ as $\mathcal{L}$.  The language $\mathcal{L}$ should be one-sorted; otherwise the choice of $\mathcal{L}$ is irrelevant.

\section{Tools} \label{tools}
In this section, we review a few tools that will be needed in the
proof of the main theorems.  In Section~\ref{exts} we show that
certain properties ($G^0 = G^{00}$, \textit{dfg}) behave well in short
exact sequences.  In Section~\ref{csec} we show that we can take
quotients by certain \textit{dfg} groups without leaving the definable
category.
\subsection{Extensions} \label{exts}
Recall that $G^{00}$ and $G^0$ exist for definable groups $G$ in NIP theories \cite[Proposition~6.1]{HPP}.
\begin{lemma}[Assuming NIP]
  Let $\pi : G \to X$ be a surjective homomorphism of definable groups.  Then
  $\pi(G^{00}) = X^{00}$.
\end{lemma}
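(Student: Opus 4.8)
The plan is to prove the two inclusions $\pi(G^{00}) \subseteq X^{00}$ and $X^{00} \subseteq \pi(G^{00})$ separately, using throughout that $G^{00}, X^{00}$ exist as type-definable, bounded-index, normal subgroups by the cited result, and are type-definable over small parameter sets. The inclusion $\pi(G^{00}) \subseteq X^{00}$ is the easy one: since $X^{00}$ is type-definable and $\pi$ is a definable map, $\pi^{-1}(X^{00})$ is a type-definable subgroup of $G$, and since $\pi$ is surjective it induces an isomorphism $G/\pi^{-1}(X^{00}) \cong X/X^{00}$, so $\pi^{-1}(X^{00})$ has bounded index. By minimality of $G^{00}$ among type-definable bounded-index subgroups, $G^{00} \subseteq \pi^{-1}(X^{00})$, i.e.\ $\pi(G^{00}) \subseteq X^{00}$.

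For the reverse inclusion, the point is to see that $\pi(G^{00})$ is itself type-definable; granting that, $\pi(G^{00})$ is a type-definable subgroup of $X$ which is normal (image of a normal subgroup under a surjection) and of bounded index (its index is at most $[G:G^{00}]$, as $\pi$ is surjective), so minimality of $X^{00}$ gives $X^{00} \subseteq \pi(G^{00})$. To show $\pi(G^{00})$ is type-definable, write $G^{00} = \bigcap_{i \in I} D_i$ with the $D_i$ definable and, after passing to finite intersections, the family directed downward; each $\pi(D_i)$ is then definable, so it suffices to prove $\pi(G^{00}) = \bigcap_{i} \pi(D_i)$. The inclusion $\subseteq$ is immediate. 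For $\supseteq$, suppose $x \in \bigcap_i \pi(D_i)$; then for every $i$ the fiber $\pi^{-1}(x)$ meets $D_i$, so by directedness the partial type over $x$ asserting $g \in D_i$ for all $i$ together with $\pi(g) = x$ is finitely satisfiable, hence realized in the monster model by saturation, and any realization $g$ lies in $G^{00}$ and satisfies $\pi(g) = x$.

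The routine facts here — preimages of type-definable sets are type-definable, images of definable sets under definable maps are definable, and indices transfer along surjections — need no comment. The only step with genuine content is the identity $\pi(G^{00}) = \bigcap_i \pi(D_i)$, i.e.\ that $\pi$ commutes with this filtered intersection, and this is exactly where saturation of the monster model is used (together with $G^{00}$ being type-definable over a small set, so that $I$ may be taken small). Equivalently, one could phrase this step topologically: the image of a type-definable set under the quotient map to the compact group $G/G^{00}$ is closed in the logic topology. I expect this to be the only real obstacle; everything else is bookkeeping with type-definable groups.
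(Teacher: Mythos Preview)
Your proof is correct and follows essentially the same strategy as the paper: both inclusions are obtained by showing that $\pi^{-1}(X^{00})$ and $\pi(G^{00})$ are type-definable bounded-index subgroups, then invoking minimality of $G^{00}$ and $X^{00}$ respectively. The only difference is that the paper leaves the type-definability of $\pi(G^{00})$ implicit, whereas you spell out the standard compactness argument for it; this is a reasonable detail to include.
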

\begin{proof}
  There is a surjection $G/G^{00} \to X/\pi(G^{00})$, so
  $X/\pi(G^{00})$ is bounded and $\pi(G^{00}) \supseteq X^{00}$.
  There is an bijection $G/\pi^{-1}(X^{00}) \to X/X^{00}$, so
  $G/\pi^{-1}(X^{00})$ is bounded and $G^{00} \subseteq
  \pi^{-1}(X^{00})$.  This implies $\pi(G^{00}) \subseteq X^{00}$.
\end{proof}
\begin{lemma}[Assuming NIP] \label{0-00-ext}
  Let $1 \to H \to G \overset{\pi}{\to} X \to 1$ be a short exact
  sequence of definable groups.  If $H^0 = H^{00}$ and $X^0 = X^{00}$,
  then $G^0 = G^{00}$.
\end{lemma}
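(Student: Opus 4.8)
The plan is to pass to the compact group $K := G/G^{00}$ with its logic topology and show that $K$ is totally disconnected; since the identity component of $K$ is $G^0/G^{00}$, this is exactly the assertion $G^0 = G^{00}$.

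First I would collect the easy consequences of the preceding lemma. Let $q : G \to K$ be the quotient map. The preceding lemma gives $\pi(G^{00}) = X^{00}$, so $\pi$ descends to a continuous surjection $\bar\pi : K \to X/X^{00}$ whose kernel is $q(H) = HG^{00}/G^{00}$. Also, since $\pi^{-1}(Y)$ is a finite-index definable subgroup of $G$ for every finite-index definable subgroup $Y \subseteq X$, we get $\pi(G^0) \subseteq X^0$. Using the hypothesis $X^0 = X^{00}$ together with $\pi(G^{00}) = X^{00}$, this yields $\pi(G^0) \subseteq \pi(G^{00})$, and hence $G^0 \subseteq G^{00} \cdot \ker\pi = G^{00} H$.

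The main step is then algebraic: because $G^{00} \subseteq G^0$, Dedekind's modular law applies to the inclusion $G^0 \subseteq G^{00}H$ and gives $G^0 = G^{00}(G^0 \cap H)$. Applying $q$, which kills $G^{00}$, we get $q(G^0) = q(G^0 \cap H) \subseteq q(H)$. Next I would argue that $q(H)$ is profinite. Since $H \cap G^{00}$ is type-definable (over a small set) and of bounded index in $H$, it contains $H^{00}$; so the inclusion $H^{00} \subseteq H \cap G^{00}$ induces a continuous surjection of compact groups $H/H^{00} \twoheadrightarrow H/(H \cap G^{00}) \cong q(H)$. By the hypothesis $H^0 = H^{00}$ the source is $H/H^0$, which is profinite, and a continuous image of a profinite group is profinite; hence $q(H)$ is profinite, in particular totally disconnected.

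Finally, $q(G^0) = G^0/G^{00}$ is the identity component of $K$, hence connected; being a connected subgroup of the totally disconnected group $q(H)$, it is trivial, so $G^0 \subseteq G^{00}$ and therefore $G^0 = G^{00}$. I expect the only delicate points to be topological bookkeeping in the logic topology: that the inclusion of type-definable subgroups $H^{00} \subseteq H \cap G^{00}$ induces a continuous surjection $H/H^{00} \to H/(H \cap G^{00})$ of compact Hausdorff groups, and that the identity component of $G/G^{00}$ equals $G^0/G^{00}$ — both of which are standard facts about connected components of definable groups in NIP theories.
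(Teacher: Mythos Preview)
Your argument is correct. Both your proof and the paper's hinge on the same core observation that $H/(H\cap G^{00})$ is a continuous image of the profinite group $H/H^{00}$, hence profinite; the difference lies in how the hypothesis on $X$ is used. The paper sets up the short exact sequence of compact Hausdorff groups
\[
1 \to H/(H\cap G^{00}) \to G/G^{00} \to X/X^{00} \to 1
\]
and invokes the structural fact that an extension of a profinite group by a profinite group is profinite. You instead translate $X^0=X^{00}$ into the algebraic containment $\pi(G^0)\subseteq X^0=\pi(G^{00})$, hence $G^0\subseteq G^{00}H$, and then use the modular law to trap the connected group $G^0/G^{00}$ inside the totally disconnected group $q(H)$. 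Your route is more elementary in that it avoids the ``extension of profinite by profinite'' fact, at the cost of the small extra computation with $\pi(G^0)$ and the modular law; the paper's route is shorter but leans on slightly more structure theory of compact groups. The topological bookkeeping you flag (continuity of $H/H^{00}\to H/(H\cap G^{00})$, and the identification of $(G/G^{00})^\circ$ with $G^0/G^{00}$) is indeed standard and is used implicitly in the paper as well.
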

\begin{proof}
  The fact that $H^0 = H^{00}$ and $X^0 = X^{00}$ means that
  $H/H^{00}$ and $X/X^{00}$ are profinite.  The short exact sequence
  \begin{equation*}
    1 \to H/(H \cap G^{00}) \to G/G^{00} \to X/X^{00} \to 1 \tag{$\ast$}
  \end{equation*}
  shows that $H/(H \cap G^{00})$ is bounded, and then ($\ast$)
  is continuous in the logic topology.  As $H/(H \cap G^{00})$ is
  bounded, it must be a quotient of $H/H^{00}$ which is profinite.  Therefore $H/(H \cap G^{00})$ is profinite.  In
  the category of compact Hausdorff groups, an extension of a
  profinite group by a profinite group is profinite.  Therefore
  $G/G^{00}$ is profinite, which implies $G^0 = G^{00}$.
\end{proof}
Recall that $\pCF$ has definable Skolem functions.
\begin{lemma} \label{dfg-ext}
  Suppose that $\Mm$ is a saturated model of $\pCF$.  Let
  \begin{equation*}
    1\longrightarrow
    A\overset{i}{\longrightarrow}B\overset{\pi}{\longrightarrow} C
     \longrightarrow 1
  \end{equation*}
  be a short exact sequence of definable groups.  Then $B$ has
  \textit{dfg} iff $A$ and $C$ do.
\end{lemma}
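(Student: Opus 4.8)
The plan is to prove the two directions separately, using the characterization of \textit{dfg} in terms of a global definable type with boundedly many translates, together with the fact that $\pCF$ has definable Skolem functions.

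First I would handle the easier direction: if $B$ has \textit{dfg}, then so do $A$ and $C$. For $C$, one pushes forward a global definable $f$-generic type $p$ of $B$ along $\pi$; since $\pi$ is a definable surjective homomorphism, $\pi_*(p)$ is a global definable type on $C$, and its left translates are among the images of left translates of $p$, so there are boundedly many. For $A$, the point is that $A$ is definably isomorphic (via $i$) to a definable subgroup of $B$ of ``bounded index'' in the relevant sense—more precisely, $A$ has \textit{dfg} because any definable subgroup of a \textit{dfg} group has \textit{dfg}; one can see this by noting that $A$ has bounded index in $\pi^{-1}(\text{stabilizer data})$, or more directly by invoking the general principle (valid in NIP, or provable here using definable Skolem functions) that \textit{dfg} passes to definable subgroups. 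I expect this direction to be essentially formal.

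The harder direction is the converse: given that $A$ and $C$ have \textit{dfg}, produce a global definable $f$-generic type on $B$. The natural strategy is to take a global definable $f$-generic type $q$ on $C$ and a global definable $f$-generic type $r$ on $A$, and try to ``combine'' them into a type on $B$. Using definable Skolem functions, fix a definable section $s : C \to B$ of $\pi$ (not a homomorphism, just a definable set-theoretic section). Then every element $b \in B$ factors uniquely as $b = s(\pi(b)) \cdot i(a)$ for some $a \in A$, giving a definable bijection $B \cong C \times A$. One then wants to define a type $p$ on $B$ as (roughly) the ``product'' of $q$ on the $C$-coordinate and $r$ on the $A$-coordinate, i.e.\ realize the $C$-coordinate by a realization of $q$ and then, over that, realize the $A$-coordinate by $r$. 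Definability of $p$ follows from definability of $q$ and $r$ and of the section $s$.

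The main obstacle is verifying that this $p$ has boundedly many left translates. Left translation on $B$ does not respect the product decomposition $C \times A$ cleanly: translating by $b_0$ sends the $C$-coordinate $c$ to $\pi(b_0) c$ (fine, controlled by the bounded orbit of $q$), but the $A$-coordinate gets translated by an element that depends on both $b_0$ \emph{and} $c$, via the cocycle coming from the non-homomorphic section $s$ and the (possibly nontrivial) conjugation action of $C$ on $A$. So the $A$-coordinate of $b_0 \cdot p$ is governed by a $c$-dependent family of translates of $r$. The key observation that rescues the argument is that $r$, being \textit{dfg} on the \emph{abelian} group $A$, has a \emph{unique} translate: a \textit{dfg} type on an abelian group is translation-invariant (its stabilizer is all of $A^{00}$... more carefully, in an abelian group a global $f$-generic type is the unique $f$-generic and is invariant under $A$, or at least the stabilizer has bounded index and one can pass to it). Hence the $c$-dependent translates of $r$ are all equal, the $A$-coordinate behaves invariantly, and $b_0 \cdot p$ depends only on the coset data, giving boundedly many translates. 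I would need to be careful here about whether $r$ is genuinely $A$-invariant or merely has bounded-index stabilizer—in the latter case one passes to the finite-index subgroup, applies the bounded-orbit count there, and then notes that finitely many cosets only multiply the bound by a bounded factor. I would also check that the conjugation action of $C$ on $A$ sends $r$ to a translate of itself (it sends $r$ to another \textit{dfg} type on $A$, hence to the same type by uniqueness up to translation), which is what makes the bookkeeping close up.
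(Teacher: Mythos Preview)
Your overall strategy for the hard direction---fix a definable section $s : C \to B$, build a type on $B$ from the Morley product of the \textit{dfg} types on $C$ and $A$, then analyze left translates coordinate-wise---is exactly the paper's approach. But both directions of your proposal have real gaps.

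For $B$ \textit{dfg} $\Rightarrow$ $A$ \textit{dfg}: the ``general principle'' that \textit{dfg} passes to definable subgroups is simply false, even in $\pCF$. The additive group $\Qq_p$ has \textit{dfg}, but its definable subgroup $\Zz_p$ is definably compact, hence \textit{fsg} and not \textit{dfg}. (This does not contradict the lemma, since $\Qq_p/\Zz_p$ is only interpretable, not definable---but it shows the step cannot be dismissed as formal.) The paper's argument is specific to the short exact sequence and genuinely uses the section: take $b_0$ realizing a \textit{dfg} type on $B$, set $a_0 = b_0 \cdot s(\pi(b_0))^{-1} \in A$, and verify directly that $\tp(a_0/\Mm)$ is definable and $A^{00}$-invariant.

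For $A,C$ \textit{dfg} $\Rightarrow$ $B$ \textit{dfg}: your identification of the obstacle is right---translating by $\delta \in B(\Mm)$ shifts the $A$-coordinate by some $\delta'$ depending on the realization $c_0$ of $q$---but your resolution is not. The lemma does not assume $A$ is abelian, and even for abelian $A$ a \textit{dfg} type is neither translation-invariant nor unique up to translation (its stabilizer is $A^{00}$, generally a proper subgroup), so your ``uniqueness'' argument for handling both the cocycle and the conjugation action does not go through. The point you are missing is simpler: $\delta' = s(\pi(\delta) c_0)^{-1} \cdot \delta \cdot s(c_0)$ lies in $A(U)$ for $U = \dcl(\Mm c_0)$, and since $r$ is \textit{dfg} with all translates $M_0$-definable, the same holds for its canonical extension $r|U$ and all its translates by elements of $A(U)$. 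Hence $\tp(\delta' \cdot a_0/U)$ is $M_0$-definable, and so is $\tp(\delta \cdot b_0/\Mm)$ for every $\delta$, which is exactly \textit{dfg}. No separate handling of conjugation is needed; it is already absorbed into $\delta'$.
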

\begin{proof}
  We prove the following:
  \begin{enumerate}
  \item If $B$ has \textit{dfg}, then $C$ has \textit{dfg}.
  \item If $B$ has \textit{dfg}, then $A$ has \textit{dfg}.
  \item If $A$ and $C$ have \textit{dfg}, then $B$ has \textit{dfg}.
  \end{enumerate}
  By definable Skolem functions, there is a definable function $f : C
  \to B$ which is a set-theoretic section of $\pi$, in the sense that
  $\pi(f(c)) = c$ for $c \in C$.  Now we proceed with the proofs:
  \begin{enumerate}
  \item If $\tp(b/\Mm)$ is a definable f-generic type in $B$, then
    $\tp(\pi(b)/\Mm)$ is a definable f-generic type in $C$.
  \item The proof is nearly identical to \cite[Lemmas~2.24,
    2.25]{pillay-yao}.  In an elementary extension $\Mm' \succeq \Mm$,
    take $b_0 \in B(\Mm')$ realizing a definable f-generic type in
    $B$.  Write $b_0$ as $a_0 \cdot f(\pi(b_0))$ for some $a_0 \in
    A(\Mm')$.  Then $a_0 \in \dcl(\Mm b_0)$, so $\tp(a_0/\Mm)$ is
    definable.  We claim that $\tp(a_0/\Mm)$ has boundedly many left
    translates, and is therefore a definable f-generic type in $A$.
    Note that $A^{00} \subseteq B^{00}$ because $A/(A \cap B^{00})
    \cong AB^{00}/B^{00}$ is bounded.  If $\delta \in A^{00}(\Mm)$,
    then $\tp(\delta \cdot b_0 / \Mm) = \tp(b_0 / \Mm)$, and therefore
    \begin{equation*}
      \tp(\delta \cdot b_0 \cdot f(\pi(\delta \cdot b_0))^{-1} / \Mm)
      = \tp(b_0 \cdot f(\pi(b_0))^{-1}/\Mm) = \tp(a_0/\Mm).
    \end{equation*}
    But $\pi(\delta \cdot b_0) = \pi(b_0)$, and so
    \begin{equation*}
      \tp(\delta \cdot b_0 \cdot f(\pi(\delta \cdot b_0))^{-1} / \Mm)
      = \tp(\delta \cdot b_0 \cdot f(\pi(b_0))^{-1} / \Mm)
      = \tp(\delta \cdot a_0 / \Mm).
    \end{equation*}
    Therefore $\tp(a_0/\Mm)$ is invariant under left translation by
    any $\delta \in A^{00}$, and it has boundedly many left
    translates.
  \item 
  Let $p(x)\in S_A(\Mm)$ and $q(y)\in
  S_C(\Mm)$ be \textit{dfg} types of $A$ and $C$ respectively.  Let
  $M_0$ be a small model defining the section $f$, the short exact
  sequence, and all the left translates of $p$ and $q$.

  In some elementary extension $\Mm' \succeq \Mm$, take $c_0 \models q$ and $a_0
  \models p|\Mm c_0$.  Then $\tp(a_0,c_0/\Mm)$ is $M_0$-definable---it is
  the Morley product of $p$ and $q$.  Let $b_0 = f(c_0) \cdot a_0$.  Then
  $\tp(b_0/\Mm)$ is again $M_0$-definable.  We claim that every left
  translate of $\tp(b_0/\Mm)$ is $M_0$-definable.

  Fix some $\delta \in B(\Mm)$.  Let $b_1 = \delta \cdot b_0$.  Let
  $c_1 = \pi(\delta) \cdot c_0$.  Let $\delta' = f(c_1)^{-1} \cdot
  \delta \cdot f(c_0)$.  Note
  \begin{equation*}
    \pi(\delta') = \pi(f(c_1))^{-1} \cdot \pi(\delta) \cdot
    \pi(f(c_0)) = c_1^{-1} \cdot \pi(\delta) \cdot c_0 = 1,
  \end{equation*}
  so $\delta' \in A(\Mm')$.
  Let $a_1 = \delta' \cdot a_0$.  Then
  \begin{equation*}
    b_1 = \delta \cdot b_0 = \delta \cdot f(c_0) \cdot a_0 = f(c_1)
    \cdot \delta' \cdot a_0 = f(c_1) \cdot a_1.
  \end{equation*}
  Now $\tp(c_1/\Mm) = \tp(\pi(\delta) \cdot c_0 / \Mm)$ is a
  left-translate of the \textit{dfg} type $\tp(c_0/\Mm) = q$, and so
  $\tp(c_1/\Mm)$ is $M_0$-definable.  If $U$ is $\dcl(\Mm c_0) =
  \dcl(\Mm c_1)$, then $\tp(a_1 / U) = \tp(\delta' \cdot a_0 / U)$ is
  a left translate of the \textit{dfg} type $\tp(a_0 / U) = p | U$
  (because $\delta' \in U$).  Therefore $\tp(a_1 / U)$ is again
  $M_0$-definable.  As $b_1 = f(c_1) \cdot a_1$, we see that
  $\tp(\delta \cdot b_0 / \Mm) = \tp(b_1/\Mm)$ is $M_0$-definable for
  the same reason that $\tp(b_0/\Mm)$ is $M_0$-definable, essentially because $\tp(c_1/\Mm)$ and $\tp(a_1/\Mm c_1)$ are $M_0$-definable. \qedhere
  \end{enumerate}
\end{proof}
\noindent See Theorem~\ref{dfg-ext-2} in the appendix for an alternate proof of (3) not using definable
Skolem functions.

\subsection{Codes and quotients} \label{csec}
Let $G$ be a definable group and $H$ be a normal subgroup.  A priori,
the quotient group $G/H$ is interpretable, not definable.  In this
section, we show that for certain \textit{dfg} groups $H$, the
quotient $G/H$ is automatically definable (Corollary~\ref{quots}).
The key is to show that certain definable types are coded by
\emph{real} tuples (Theorem~\ref{codes}).  Both of these results will
be proved in greater generality in future work \cite[Theorems~3.4, 4.1]{andujar-johnson}.

If $D$ is a definable set in a model $M$, let $\ulcorner D \urcorner$
denote ``the'' code of $D$ in $M^\eq$, which is well-defined up to
interdefinability.  If $\sigma \in \Aut(M)$, then
\begin{equation*}
  \sigma(D) = D \iff \sigma(\ulcorner D \urcorner) = \ulcorner D \urcorner,
\end{equation*}
and this property characterizes $\ulcorner D \urcorner$ when $M$ is
sufficiently saturated and homogeneous.
\begin{lemma} \label{var-codes}
  Let $K$ be a field and $V \subseteq K^n$ be Zariski closed.  Then
  the definable set $V$ is coded by a tuple in $K$ (rather than
  $K^\eq$).  In particular, finite subsets of $K^n$ are coded by
  tuples in $K$.
\end{lemma}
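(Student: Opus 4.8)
The plan is to encode the Zariski closed set $V$ by a finite-dimensional $K$-vector space of bounded-degree polynomials, and then to appeal to the elementary fact that a linear subspace of affine space is coded over the base field. We may assume $K$ is sufficiently saturated and homogeneous, so that by the automorphism criterion for codes recalled above it suffices to exhibit a tuple $c$ with entries in $K$ such that $\sigma(V) = V \iff \sigma(c) = c$ for every $\sigma \in \Aut(K)$.

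First I would pass to the vanishing ideal $I = \{f \in K[x_1, \dots, x_n] : f|_V \equiv 0\}$; writing $Z(S) = \{a \in K^n : f(a) = 0 \text{ for all } f \in S\}$ for the zero set of a set of polynomials, we have $Z(I) = V$ because $V$ is Zariski closed, and $I$ is visibly determined by $V$. By the Hilbert basis theorem $K[x_1,\dots,x_n]$ is Noetherian, so I may fix $d \in \Nn$ large enough that $I$ is generated by $W := I \cap K[x]_{\le d}$, where $K[x]_{\le d}$ is the space of polynomials of degree $\le d$, a $K$-vector space of dimension $N = \binom{n+d}{n}$ via the monomial basis. Then $V = Z(W)$, since $W$ generates $I$, while $W = \{g \in K[x]_{\le d} : g|_V \equiv 0\}$ is canonically attached to $V$. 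A short computation shows that $\sigma(W) = \{h \in K[x]_{\le d} : h|_{\sigma(V)} \equiv 0\}$ and $\sigma(Z(W)) = Z(\sigma(W))$ for any $\sigma \in \Aut(K)$, whence $\sigma(V) = V \iff \sigma(W) = W$: the forward direction is immediate from the first identity, and the reverse follows from the second together with $V = Z(W)$.

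It remains to code the finite-dimensional subspace $W \subseteq K^N$ by a tuple in $K$. For this I would take $c$ to be the tuple of entries of the reduced row echelon matrix of $W$, that is, the matrix whose rows form the unique reduced echelon basis of $W$. Its entries lie in $K$; it depends only on $W$, as the reduced echelon basis is a complete invariant of the subspace, and it is $\Aut(K)$-equivariant, as that basis is singled out by quantifier-free conditions on its entries; and $W$ is the row span of this matrix, so distinct subspaces yield distinct tuples. Hence $\sigma(W) = W \iff \sigma(c) = c$, and combining with the previous paragraph $\sigma(V) = V \iff \sigma(c) = c$, so $\ulcorner V \urcorner$ is interdefinable with $c \in K$. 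Finally, the ``in particular'' holds because every finite subset of $K^n$ is Zariski closed: a single point is $Z(\{x_i - a_i : i \le n\})$, and a finite union of closed sets is closed.

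I do not expect a serious obstacle; the one point needing care is canonicity. One cannot code $V$ by an arbitrarily chosen finite generating set of $I$, since that choice is not determined by $V$; the remedy is to use the canonical truncation $I \cap K[x]_{\le d}$, and it is precisely in order to know that some such truncation still generates $I$ that Noetherianity is invoked. The remaining ingredient --- that a $K$-subspace of $K^N$ is coded over $K$ --- is routine linear algebra.
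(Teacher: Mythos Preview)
Your proof is correct and takes a genuinely different route from the paper's. The paper passes to the algebraic closure $M = K^{alg}$, takes the Zariski closure $\overline{V}$ of $V$ in $M^n$, invokes elimination of imaginaries in ACF to get a real code $b \in M$ for $\overline{V}$, and then uses Galois theory (and a Frobenius twist in positive characteristic) to descend $b$ into $K$; finally one checks that this $b$ also codes $V = \overline{V} \cap K^n$ in the structure $K$. Your argument stays entirely inside $K$: you replace $V$ by the finite-dimensional truncation $W = I(V) \cap K[x]_{\le d}$ of its vanishing ideal, and then code the linear subspace $W \subseteq K^N$ by its reduced row echelon basis. The paper's approach is shorter if one takes EI in ACF as a black box, and it explains conceptually \emph{why} real codes exist (they come from the algebraically closed world). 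Your approach is more elementary and self-contained---it avoids ACF, Galois theory, and the perfect-closure manoeuvre in characteristic $p$---and it makes the code completely explicit. One small point worth tightening: the RREF matrix has $\dim W$ rows, so to get a tuple of fixed length you should pad with zero rows (or record the pivot set separately); this is harmless since the lemma only asks for \emph{some} real tuple coding the single set $V$.
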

\begin{proof}
  Passing to an elementary extension, we may assume $K$ is
  $\aleph_1$-saturated and strongly $\aleph_1$-homogeneous.  Let $M =
  K^{alg}$.  Let $\overline{V}$ be the Zariski closure of $V$ in
  $M^n$.  Note $V = \overline{V} \cap K^n$.  By elimination of
  imaginaries in ACF, there is a tuple $b \in M$ which codes
  $\overline{V}$ in the structure $M^n$.  If $\sigma \in \Aut(M/K)$
  then $\sigma$ fixes $V$ setwise, so it also fixes the Zariski
  closure $\overline{V}$.  Therefore $\sigma(b) = b$, for any $\sigma
  \in \Aut(M/K)$.  By Galois theory, $b$ is in the perfect closure of
  $K$.  Replacing $b$ with $b^{p^n}$ if
  necessary, we may assume $b$ is a tuple in $K$.

  We claim that $b$ codes $V$ in the structure $K$.  Suppose $\sigma_0
  \in \Aut(K)$.  Extend $\sigma_0$ to an automorphism $\sigma \in
  \Aut(M)$ arbitrarily.  Then $b$ codes $V$ because
  \begin{equation*}
    \sigma_0(V) = V \iff \sigma(V) = V \overset{\ast}{\iff}
    \sigma(\overline{V}) = \overline{V} \iff \sigma(b) = b \iff
    \sigma_0(b) = b.
  \end{equation*}
  The starred $\overset{\ast}{\iff}$ requires some explanation.  The
  direction $\Rightarrow$ holds because the formation of Zariski
  closures is automorphism invariant.  The direction $\Leftarrow$
  holds because $\sigma$ fixes $K$ setwise and $V = \overline{V} \cap
  K^n$.
\end{proof}

\begin{lemma} \label{interdef}
Work in a monster model $\Mm$ of $\pCF$.
\begin{enumerate}
    \item If an imaginary tuple $a$ is algebraic over a real tuple $b$, then $a$ is definable over $b$.
    \item If an imaginary tuple $a$ is interalgebraic with a real tuple $b$, then $a$ is interdefinable with some real tuple $b'$.
\end{enumerate}
More generally, both statements hold if we work over a set of real parameters $C \subseteq \Mm$.
\end{lemma}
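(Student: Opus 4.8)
The plan is to reduce everything to the statement that algebraic imaginaries over a real base are in fact definable, which in turn I would deduce from the known structure of $\pCF^\eq$ — specifically, from elimination of imaginaries down to the geometric sorts (Hrushovski–Martin–Rideau, or in weaker forms sufficient here, the fact that $\pCF$ has elimination of imaginaries after adding the sorts $S_n = GL_n/GL_n(\Oo)$ etc.), or alternatively from the weak elimination of imaginaries plus the coding of finite sets of lattices. For part (1): suppose the imaginary $a$ is algebraic over the real tuple $b$ (and a set $C$ of real parameters). After naming $Cb$ we may assume $a \in \acl(\emptyset)$ over a real base, so $a$ has finitely many conjugates $a = a_1, \ldots, a_k$ under $\Aut(\Mm/Cb)$. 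Using elimination of imaginaries to the geometric sorts, each $a_i$ is coded by a finite tuple of lattices, i.e.\ is interdefinable with an imaginary coming from the sorts $S_n$. The finite set $\{a_1,\ldots,a_k\}$ is then $\Aut(\Mm/Cb)$-invariant, hence (by weak elimination of imaginaries for these sorts, or directly) definable over $Cb$; but a single element of a finite $Cb$-definable set need not be $Cb$-definable in general — here is where $p$-adic-specific input is needed. The key point is that in $\pCF$, a finite $C$-definable set of elements of the geometric sorts, with $C$ real, actually consists of $\dcl(C)$-points: this is because $\Qp$ and its elementary extensions have no finite Galois extensions inside the relevant sorts that are not already split — more precisely, $\acl^\eq(C) = \dcl^\eq(C)$ fails in general but $\acl$ of a real set within the geometric sorts is definable, by the analysis of $\acl$ in $\pCF$ (the residue field and value group are stably embedded with the value group being a pure ordered abelian group where $\acl = \dcl$, and the residue field $\Ff_p$ is finite so contributes nothing new over a real base).

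Concretely, I would argue as follows. Let $a$ be algebraic over real $Cb$; by elimination of imaginaries to the geometric sorts we may assume $a$ is a tuple from those sorts. An element of $S_n = GL_n(\Mm)/GL_n(\Oo)$ is a lattice in $\Mm^n$; a finite $Cb$-definable set of lattices is, by saturation, coded by a real tuple (one can write down, in the field sort, a defining system of linear inequalities with coefficients in $\dcl(Cb)$, using that $\Qp$ has definable choice on the value group side and the residue field is finite). Thus the finite set $\{a_1,\ldots,a_k\}$ is coded by a real tuple $d \in \dcl(Cb)$. Now I invoke that over a real base, $\acl^\eq$ restricted to the geometric sorts coincides with $\dcl^\eq$: this is Lemma~2.1(ii) of Hrushovski–Martin–Rideau-Kikuchi, or can be extracted from the cell decomposition and the fact that $\pCF$ has definable Skolem functions together with $|\Ff_p| < \infty$. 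Definable Skolem functions give us, from the finite $Cb$-definable set, an actual $Cb$-definable element; applied to the set $\{a_1, \ldots, a_k\}$ this forces $k = 1$ and $a \in \dcl(Cb)$. That proves (1).

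For part (2): if the imaginary $a$ is interalgebraic with the real tuple $b$ over $C$, then by part (1) applied to $a/Cb$, we get $a \in \dcl(Cb)$, so $a = g(b)$ for some $C$-definable function $g$. Conversely $b \in \acl(Ca)$, and I want a real tuple $b'$ with $\dcl(Cb') = \dcl(Ca)$. Consider the fiber $g^{-1}(a) \ni b$; it is $Ca$-definable, and since $b \in \acl(Ca)$ this fiber is finite. By Lemma~\ref{var-codes} (finite subsets of $K^n$ are coded by tuples in $K$), the finite set $g^{-1}(a)$ is coded by a real tuple $b'$. Then $b' \in \dcl(Ca)$ since $g^{-1}(a)$ is $Ca$-definable, and $a \in \dcl(Cb')$ because from $b'$ we recover the finite set $g^{-1}(a)$, pick any element $\beta$ of it by definable Skolem functions, and apply $g$ to get $a = g(\beta)$. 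Hence $a$ and $b'$ are interdefinable over $C$, as required. The parenthetical ``more generally over real $C$'' is automatic throughout, since $C$ never had to be empty.

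The main obstacle, and the step I would spend the most care on, is the claim that $\acl^\eq = \dcl^\eq$ over a real base when restricted to imaginaries that are relevant here — equivalently, that definable Skolem functions survive the passage to the geometric imaginary sorts, or at least to the specific finite definable sets that arise. In the pure field sort this is just definable Skolem functions for $\pCF$ (quoted in the excerpt), but for lattice-valued imaginaries one must either cite the full $p$-adic elimination of imaginaries result or give an ad hoc argument that a finite $Cb$-definable set of lattices has a $\dcl(Cb)$-member. I expect the cleanest route is to cite Hrushovski–Martin–Rideau-Kikuchi for elimination of imaginaries to the geometric sorts and then note that those sorts, over a real base, have no nontrivial finite definable subsets without a definable point, because the field has definable Skolem functions and the obstruction (a finite Galois action) would have to come from $\acl$ of the value group or residue field, both of which are trivial over real parameters.
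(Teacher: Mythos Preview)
Your approach to part (1) is a long detour around a one-line argument. Definable Skolem functions in $\pCF$ imply that $\dcl(Cb)$ is an elementary substructure of $\Mm$ whenever $Cb$ is a set of real elements; hence $\dcl^\eq(Cb) \preceq \Mm^\eq$, and elementary substructures are algebraically closed. That is the paper's entire proof of (1). Your route through Hrushovski--Martin--Rideau and the geometric sorts is not only unnecessary, it does not close: you need that a finite $Cb$-definable subset of the lattice sorts $S_n$ has a $Cb$-definable point, and you yourself flag this as ``the main obstacle.'' Definable Skolem functions as stated apply only to the home sort, so invoking them on $\{a_1,\ldots,a_k\}$ is not legitimate without further work. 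The observation you are missing is that the model $\dcl(Cb)$ already gives you $\acl^\eq(Cb) = \dcl^\eq(Cb)$ for free, with no case analysis on sorts.

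Your part (2) is close to the paper's argument but contains a genuine gap: the fiber $g^{-1}(a)$ need not be finite. The function $g$ witnessing $a \in \dcl^\eq(Cb)$ is arbitrary, and nothing prevents it from being constant on a large set; the hypothesis $b \in \acl^\eq(Ca)$ constrains $b$, not the fiber. The paper fixes this by taking $S$ to be the set of realizations of $\tp(b/Ca)$ rather than $g^{-1}(a)$: this set is finite because $b \in \acl^\eq(Ca)$, it is $Ca$-definable because algebraic types are isolated, and every element of $S$ still maps to $a$ under $g$ (being a conjugate of $b$ over $Ca$). With that correction your argument for (2) matches the paper's.
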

\begin{proof}
  \begin{enumerate}
      \item Note that $\dcl(b) \preceq \Mm$ by definable Skolem functions, and so $\dcl^\eq(b) \preceq \Mm^\eq$.  Submodels are algebraically closed, so $\acl^\eq(b) = \dcl^\eq(b)$ and $a \in \dcl^\eq(b)$.
      \item By part (1), $a \in \dcl^\eq(b)$.  Write $a$ as $f(b)$ for some $\varnothing$-definable function $f$.  Let $S \subseteq \Mm^n$ be the set of realizations of $\tp(b/a)$.  Then $S$ is finite as $b \in \acl^\eq(a)$.  Moreover, $S$ is $a$-definable, and so the code $\ulcorner S \urcorner$ is in $\dcl^\eq(a)$.  By Lemma~\ref{var-codes}, we can take the code $\ulcorner S \urcorner$ to be a real tuple.
      For any $c \in S$, we have $f(c) = a$, which implies $a \in \dcl^\eq(\ulcorner S \urcorner)$.  Then $a$ is interdefinable with the real tuple $\ulcorner S \urcorner$.
  \end{enumerate}
    The ``more general'' statements follow by the same proofs.  Indeed, we can name the elements of $C$ as constants without losing definable Skolem functions or codes for finite sets.
\end{proof}
If $p$ is a definable $n$-type over $M$, let
$\ulcorner p \urcorner$ denote the infinite tuple $(\ulcorner
D_\varphi \urcorner : \varphi \in L)$, where
\begin{equation*}
  D_\varphi = \{b \in M^m : \varphi(x,b) \in p(x)\}.
\end{equation*}
For $\sigma \in \Aut(M)$, we have
\begin{equation*}
  \sigma(p) = p \iff \sigma(\ulcorner p \urcorner) = \ulcorner p
  \urcorner,
\end{equation*}
and this property determines $\ulcorner p \urcorner$ up to
interdefinability when $M$ is sufficiently saturated and homogeneous.
\begin{lemma} \label{codes-1}
  If $q \in S_1(\Mm)$ is definable, then $\ulcorner q \urcorner$ is
  interdefinable with a (finite) real tuple.
\end{lemma}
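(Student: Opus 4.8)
The plan is to choose a realization $a \models q$ in an elementary extension $\Mm' \succeq \Mm$ and to classify $q$ by how $a$ sits over $\Mm$, then read off $\ulcorner q \urcorner$ from the classification. Two external facts do the work: Macintyre quantifier elimination, so that a $1$-variable formula over $\Mm$ is a Boolean combination of conditions $t(x) = 0$ and $P_n(t(x))$ ($t \in \Mm[x]$, $n \ge 2$) and hence $q$ is determined by which such conditions it contains; and the fact that $\val(\Mm)$, being a pure Presburger structure, has no definable cut other than the realized ones and $\pm\infty$.

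First, if $a \in \acl(\Mm)$ then $a \in \Mm$ (models are algebraically closed in elementary extensions), so $q$ is realized and $\ulcorner q \urcorner$ is interdefinable with the real element $a$. So assume $a$ is transcendental over $\Mm$. For $c \in \Mm$, the cut of $v(a - c)$ in $\val(\Mm)$ is definable, since its initial segments $\{c' \in \Mm : v(a - c') \ge \gamma\}$ form a uniformly definable family of balls; by the Presburger fact, either $v(a - c) \in \val(\Mm)$ or $v(a - c) > \val(\Mm)$ (meaning it exceeds every element). A quick ultrametric computation shows there is at most one $c$ of the latter kind.

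The crux is to show there is such a $c$. Otherwise, the supremum of $v(a - c)$ over $c \in \Mm$ is not attained --- if it were attained at $c$, with $v(a-c) = \gamma \in \val(\Mm)$, then, since the residue field is $\Ff_p$, correcting $c$ by a suitable element of valuation $\gamma$ would strictly increase the valuation --- and, being definable, the supremum is then $+\infty$; so $\{c' \in \Mm : v(a - c') \ge \gamma\}$ is a strictly decreasing uniformly definable chain of nonempty balls with empty intersection. This contradicts the standard fact that $\pCF$ is definably spherically complete (no $\Mm$-definable decreasing chain of balls has empty intersection): such a chain has definable centers by definable Skolem functions, hence a pseudo-limit in $\Mm$, which lies in every ball. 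Supplying or citing this input cleanly is the step I expect to be the main obstacle --- it is essential, since if such a chain existed the corresponding pseudo-limit type would be a counterexample to the lemma; the rest is cell-decomposition bookkeeping. So there is a unique $c_0 \in \Mm$ with $v(a - c_0) > \val(\Mm)$.

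Finally I would identify the code. Write $a = c_0 + \epsilon$ with $v(\epsilon) > \val(\Mm)$. For $c \ne c_0$ we get $v(a - c) = v(c_0 - c)$, and for any $t \in \Mm[x]$ with $t(c_0) \ne 0$ we get $t(a) = t(c_0)(1 + \eta)$ with $v(\eta) > \val(\Mm)$, so $t(a) \in t(c_0)P_n$ by Hensel's lemma. Factoring polynomials over $\Mm$, it follows that whether $P_n(t(x)) \in q$ is determined by $c_0$ together with the class of $\epsilon$ in $(\Mm')^\times/((\Mm')^\times)^n$; but that group is finite and each of its cosets is $\varnothing$-definable (already realized in the prime model), so this adds only $\varnothing$-definable information. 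Hence every automorphism fixing $c_0$ fixes $q$, and $\ulcorner q \urcorner \in \dcl^\eq(c_0)$. Conversely $c_0$ is the unique real $c$ such that $q$ contains $v(x - c) \ge v(d)$ for every $d \in \Mm$, so $c_0 \in \dcl^\eq(\ulcorner q \urcorner)$. Thus $\ulcorner q \urcorner$ is interdefinable with the finite real tuple $c_0$ (and with $a$ in the realized case), as required.
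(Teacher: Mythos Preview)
Your approach is sound and genuinely different from the paper's. Both arguments locate the same ``center'' $c_0$: the paper obtains it as the accumulation point of $q$ in the definably compact space $\Pp^1(\Mm)$, you via definable spherical completeness --- two phrasings of the same underlying fact. The divergence is in passing from $c_0$ to the code. The paper quotes the result that only boundedly many definable $1$-types accumulate at a given point, concludes $\ulcorner q \urcorner \in \acl^\eq(c_0)$, and then upgrades to $\dcl^\eq(c_0)$ via definable Skolem functions (as in Lemma~\ref{interdef}). You instead run an explicit Macintyre QE calculation showing directly that every automorphism of $\Mm$ fixing $c_0$ fixes $q$. Your route is more elementary and self-contained; the paper's is shorter but leans on heavier external citations.

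There is, however, a genuine omission. Your Presburger ``dichotomy'' --- either $v(a-c) \in \val(\Mm)$ or $v(a-c) > \val(\Mm)$ --- misses the third case where the definable cut is $-\infty$, i.e.\ $v(a-c) < \val(\Mm)$. This occurs precisely when $v(a) < \val(\Mm)$; then $v(a-c) = v(a)$ for every $c \in \Mm$, so no $c_0$ exists and your spherical-completeness argument never gets off the ground. Such definable types do exist (e.g.\ the pushforward of the infinitesimal type along $x \mapsto 1/x$), so this case must be handled. The paper's fix works for you too: first push $q$ forward along the $\varnothing$-definable map $x \mapsto 1/x$ if necessary to ensure $q$ does not accumulate at $\infty$; this does not change the code. (A minor related point: your characterization of $c_0$ via ``$q$ contains $v(x-c) \ge v(d)$ for every $d \in \Mm$'' is vacuous as written, since $d = 0$ forces $x = c$; restrict to $d \ne 0$.)
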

\begin{proof}
  By \cite[Proposition~2.24]{johnson-yao}, the type $q$ must
  accumulate at some point $c$ in the projective line $\Pp^1(\Mm)$,
  because $\Pp^1(\Mm)$ is definably compact.  If necessary, we can
  push $q$ forward along the map $x \mapsto 1/x$ to ensure $c \ne
  \infty$.  Then $c \in \Mm$.  Note $c \in \dcl^\eq(\ulcorner q
  \urcorner)$.  There are only boundedly many types concentrating at
  $c$ by \cite[Corollary 7.5]{johnson-dpm1} or \cite[Fact~2.20]{johnson-yao}, so $\ulcorner q \urcorner$ has a small
  orbit under $\Aut(\Mm/c)$.  Then $\ulcorner q \urcorner \in
  \acl^\eq(c)$.  As in the proof of Lemma~\ref{interdef}(1), $\ulcorner q
  \urcorner \in \dcl^\eq(c)$, so $\ulcorner q \urcorner$ is interdefinable with $c$.
\end{proof}

\begin{theorem} \label{codes}
  Suppose $q \in S_n(\Mm)$ is a definable type, and $\dim(q) = 1$.
  Then $\ulcorner q \urcorner$ is interdefinable with a real tuple.
\end{theorem}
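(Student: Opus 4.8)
The plan is to reduce to the one‑variable case (Lemma~\ref{codes-1}) and to the fact that Zariski closed sets have real codes (Lemma~\ref{var-codes}); the outcome will be that $\ulcorner q\urcorner$ is interdefinable with $(c,z)$, where $c$ codes the first‑coordinate restriction of $q$ and $z$ codes the Zariski closure of its locus.

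First I would normalize $q$. Pick $a=(a_1,\dots,a_n)\models q$ in an elementary extension $\Mm'\succ\Mm$. Since $\dim(q)=1$ exactly one coordinate is transcendental over $\Mm$, and after permuting coordinates (which alters $\ulcorner q\urcorner$ only up to interdefinability) we may assume $a_1\notin\acl(\Mm)=\Mm$. The remaining coordinates lie in $\acl(\Mm a_1)=\dcl(\Mm a_1)$, the equality holding because $\dcl(\Mm a_1)\preceq\Mm'$ by definable Skolem functions; thus $q$ is the pushforward of $q_1:=\tp(a_1/\Mm)$ along an $\Mm$-definable map $x\mapsto(x,f_2(x),\dots,f_n(x))$. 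Let $Z\subseteq\Mm^n$ be the Zariski closure of the locus of $q$, so $\dim(Z)=1$, $a\in Z$, and $q$ concentrates on $Z$.

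Now introduce the two real parameters. The code $\ulcorner q_1\urcorner$ occurs among the coordinates of $\ulcorner q\urcorner$, so $\ulcorner q_1\urcorner\in\dcl^\eq(\ulcorner q\urcorner)$; since $q_1$ is a definable $1$-type, Lemma~\ref{codes-1} gives a finite real tuple $c$ interdefinable with $\ulcorner q_1\urcorner$, and then $c\in\dcl^\eq(\ulcorner q\urcorner)$. Likewise $Z$ is determined by $q$, so $\ulcorner Z\urcorner\in\dcl^\eq(\ulcorner q\urcorner)$, and by Lemma~\ref{var-codes} there is a finite real tuple $z$ interdefinable with $\ulcorner Z\urcorner$, with $z\in\dcl^\eq(\ulcorner q\urcorner)$. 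In particular every $\sigma\in\Aut(\Mm)$ fixing $(c,z)$ fixes $q_1$ and fixes $Z$ setwise. The key claim is that $\ulcorner q\urcorner\in\acl^\eq(c,z)$. For this, set
\[
  \mathcal{S}:=\{\,p\in S_n(\Mm)\ :\ \text{$p$ concentrates on $Z$ and restricts to $q_1$ in the first variable}\,\}.
\]
Because $Z$ contains the point $a$ with $a_1$ transcendental, the first projection $Z\to\Aa^1$ is dominant, so a realization of $q_1$ avoids the finite ``bad locus'' in $\Aa^1(\Mm)$ over which $Z$ has infinite fibres; hence the fibre of $Z$ over a realization of $q_1$ is finite, and $\mathcal{S}$ injects into that fibre and is finite. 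On the other hand $\mathcal{S}$ is manifestly $\Aut(\Mm/(c,z))$-invariant, as its definition refers only to $q_1$ and $Z$. Since $q\in\mathcal{S}$, the orbit of $q$ under $\Aut(\Mm/(c,z))$ is contained in $\mathcal{S}$, hence finite, hence so is the orbit of each coordinate of $\ulcorner q\urcorner$; thus $\ulcorner q\urcorner\in\acl^\eq(c,z)$. As $(c,z)$ is real, Lemma~\ref{interdef}(1) upgrades this to $\ulcorner q\urcorner\in\dcl^\eq(c,z)$, and combined with $c,z\in\dcl^\eq(\ulcorner q\urcorner)$ we get that $\ulcorner q\urcorner$ is interdefinable with the real tuple $(c,z)$.

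The point that $\mathcal{S}$ is engineered to absorb is that $q$ is genuinely not determined by $(q_1,Z)$ alone: the section $x\mapsto(x,f_2(x),\dots,f_n(x))$ may travel along any of the finitely many sheets of $Z$ over $\Aa^1$, and distinct sheets can carry distinct (even $\varnothing$-definably distinct) types. But the ambiguity is finite, so it is invisible to $\acl^\eq$, and Lemma~\ref{interdef}(1) then removes it. I expect the only genuinely delicate step to be the finiteness of $\mathcal{S}$ --- equivalently, that the fibres of $Z\to\Aa^1$ over realizations of $q_1$ are finite --- which is exactly where the hypothesis $\dim(q)=1$ is used; the rest is bookkeeping with codes and definable closure. (When $n=1$ everything collapses: $Z=\Aa^1$ and $\mathcal{S}=\{q_1\}$, and the statement is just Lemma~\ref{codes-1}.)
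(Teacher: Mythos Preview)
Your proof is correct and follows essentially the same route as the paper's: reduce to the code of the one-variable projection $q_1$ (Lemma~\ref{codes-1}) together with the code of the Zariski closure $Z$ (Lemma~\ref{var-codes}), using the finite-fibre property of $Z\to\Aa^1$ that comes from $\dim(q)=1$. The only stylistic difference is that the paper applies Lemma~\ref{interdef}(1) one level down---to the realization $\bar a$ over $\ulcorner Z\urcorner a_1$, obtaining an explicit $\ulcorner Z\urcorner$-definable bijection $f$ with $q=f_*q_1$ and hence $\ulcorner q\urcorner\in\dcl^\eq(\ulcorner Z\urcorner,\ulcorner q_1\urcorner)$ directly---whereas you apply it at the level of codes after the orbit-counting argument on $\mathcal S$.
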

\begin{proof}
  Take an elementary extension $\Mm' \succeq \Mm$ containing a realization
  $\ba$ of $q$.  Then $\trdeg(\ba/\Mm) = \dim(q) = 1$, so there is
  some $i$ such that $a_i$ is a transcendence basis of $\ba$ over
  $\Mm$, implying that $\ba$ is field-theoretically algebraic over $\Mm$ and
  $a_i$.  Then there is a Zariski-closed set $V_0 \subseteq \Mm^n$ such
  that there are only finitely many $\bb \in V_0(\Mm')$ with $b_i =
  a_i$.

  Let $V \subseteq \Mm^n$ be the smallest Zariski-closed set such that
  $\ba \in V(\Mm')$, or equivalently, the smallest Zariski-closed set
  on which $q$ concentrates.  Any automorphism of $\Mm$ which fixes
  $q$ fixes $V$, and so
  \begin{equation}
    \ulcorner V \urcorner \in \dcl^\eq(\ulcorner q \urcorner). \label{eq1}
  \end{equation}
  As $V \subseteq V_0$, there are only finitely many $\bb \in V(\Mm')$
  with $b_i = a_i$.  Therefore $\ba \in \acl^\eq(\ulcorner V \urcorner
  a_i)$.  By Lemma~\ref{var-codes}, we may assume $\ulcorner V
  \urcorner$ is a real tuple in $\Mm$, and then $\ba \in
  \dcl^\eq(\ulcorner V \urcorner a_i)$ by
  Lemma~\ref{interdef}(1).  Therefore $\ba$ and $a_i$ are
  interdefinable over $\ulcorner V \urcorner$.

  Take a bijection $f$ defined over $\ulcorner V \urcorner$ such that
  $\ba = f(a_i)$.  Then $q = \tp(\ba/\Mm)$ is the pushforward of the
  definable type $r := \tp(a_i/\Mm)$ along the $\ulcorner V
  \urcorner$-definable bijection $f$.  Therefore
  \begin{gather}
    \ulcorner q \urcorner \in \dcl^\eq(\ulcorner V \urcorner \ulcorner r \urcorner) \label{eq2}
   \end{gather}
   Likewise, $r$ is the pushforward of $q$ along the 0-definable coordinate projection $\pi(\bx) = x_i$, so 
   \begin{gather}
    \ulcorner r \urcorner \in \dcl^\eq(\ulcorner q \urcorner) \label{eq3}
  \end{gather}
  Combining equations (\ref{eq1})--(\ref{eq3}), we see that $\ulcorner
  q \urcorner$ is interdefinable with $\ulcorner V \urcorner \ulcorner
  r \urcorner$.  But $\ulcorner V \urcorner$ is a real tuple by
  Lemma~\ref{var-codes} as noted above, and $\ulcorner r
  \urcorner$ is a real tuple by Lemma~\ref{codes-1}.
\end{proof}
Using a different argument, one can show that Theorem~\ref{codes}
holds for \emph{any} definable $n$-type, without the assumption
$\dim(q) = 1$ \cite[Theorem~3.4]{andujar-johnson}.  However, the real tuple may
need to be infinite \cite[Proposition~3.7]{andujar-johnson}.

\begin{proposition} \label{quotient}
If a one-dimensional \textit{dfg} group $G$ acts on a definable set $X$, then the quotient space $X/G$ is definable (not just interpretable).
\end{proposition}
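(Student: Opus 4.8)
The plan is to reduce the definability of $X/G$ to the coding statement in Theorem~\ref{codes}. Given a point $x \in X$, its $G$-orbit $G \cdot x$ is a definable subset of $X$, and the quotient $X/G$ is definable if and only if the orbits $G \cdot x$ can be uniformly coded by \emph{real} tuples: indeed, the map $x \mapsto \ulcorner G \cdot x \urcorner$ would then be a definable map to a definable set of codes whose image is in bijection with $X/G$. So the whole question is whether the code $\ulcorner G \cdot x \urcorner \in \Mm^\eq$ can be taken to lie in $\Mm$ (the real sort), uniformly in $x$.

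First I would reduce coding the orbit $G \cdot x$ to coding a suitable definable \emph{type}. Since $G$ has \textit{dfg}, fix a definable f-generic type $p \in S_G(\Mm)$ with boundedly many left translates, defined over a small model $M_0$. For $x \in X$, push $p$ forward along the orbit map $g \mapsto g \cdot x$ to get a definable type $p_x$ concentrating on $G \cdot x$; here $p_x$ is definable over $M_0 x$, uniformly in $x$. The orbit $G \cdot x$ is recovered from $p_x$ as its $G$-saturation (or: as the smallest $G$-invariant definable set on which $p_x$ concentrates), so $\ulcorner G \cdot x \urcorner \in \dcl^\eq(\ulcorner p_x \urcorner)$; conversely $\ulcorner p_x \urcorner \in \dcl^\eq(\ulcorner G \cdot x \urcorner \cdot x \cdot \ulcorner p \urcorner)$ roughly speaking, so it is enough to show $\ulcorner p_x \urcorner$ is interdefinable with a real tuple (over the real parameters $M_0 x$). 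Now $\dim(p_x) \le \dim(G) = 1$, so Theorem~\ref{codes} (together with the $C$-relativized version of Lemma~\ref{interdef}, taking $C = M_0 x$) applies: $\ulcorner p_x \urcorner$ is interdefinable over $M_0 x$ with a real tuple. The case $\dim(p_x) = 0$ is the finite-orbit case and is handled directly by Lemma~\ref{var-codes}. Tracing through, $\ulcorner G \cdot x \urcorner$ is interdefinable over the real parameters with a real tuple, uniformly in $x$, and by compactness this uniformity can be made into a single definable assignment $x \mapsto$ (real code of $G \cdot x$), whose fibers are exactly the $G$-orbits. Replacing $X/G$ by the image of this map gives a definable set representing $X/G$.

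The main obstacle I expect is the \textbf{uniformity}: Theorem~\ref{codes} and Lemma~\ref{interdef} produce a real code of $p_x$ (hence of $G\cdot x$) for each fixed $x$, but a priori the $\emptyset$-definable function witnessing the interdefinability could vary with $x$. To get a single definable map $x \mapsto \ulcorner G \cdot x \urcorner_{\mathrm{real}}$ one argues by compactness: for each $x$ there is a formula $\psi(x,z)$ (from a countable list) such that $\psi(x,\Mm)$ is finite and its real code serves as a code for $G \cdot x$; by compactness finitely many $\psi_i$ suffice, and one splices them together into a single definable function using that $\pCF$ has definable Skolem functions (to choose, definably in $x$, which $\psi_i$ to use and to pick out the relevant finite set). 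A second, more minor point to check carefully is that $p_x$ really does determine the orbit $G\cdot x$ — i.e.\ that the f-generic $p$ is "large enough" in $G$ that its translates under the orbit map cover, in the relevant sense, all of $G \cdot x$; this uses that $p$ is $G$-f-generic so that $G$ is covered by finitely many translates of any formula in $p$, hence $G \cdot x$ is covered by finitely many translates of any formula in $p_x$, which pins down $G \cdot x$ as a definable set from $p_x$ and the group action. Everything else is bookkeeping with codes along the lines already carried out in the proofs of Theorem~\ref{codes} and Lemma~\ref{interdef}.
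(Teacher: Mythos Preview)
Your approach is the paper's: push a \textit{dfg} type $p$ forward along the orbit map and invoke Theorem~\ref{codes}. But there is a genuine gap. You assert that the assignment $x \mapsto \ulcorner p_x \urcorner$ (or its real avatar) has fibers \emph{exactly} equal to the $G$-orbits. This is false: if $x' = g \cdot x$, then $p_{x'}$ is the pushforward along $h \mapsto h \cdot x$ of the \emph{right} translate $p \cdot g$, not of $p$ itself, so in general $p_{x'} \ne p_x$. (Relatedly, the argument needs boundedly many \emph{right} translates, not left; this is harmless since one can invert.) Your own line ``$\ulcorner p_x \urcorner \in \dcl^\eq(\ulcorner G \cdot x \urcorner, x, \ulcorner p \urcorner)$'' already signals the problem: you only control $\ulcorner p_x \urcorner$ over $M_0 x$, not over $M_0 e$ where $e = G \cdot x$.

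The missing step---and this is where \textit{dfg} is actually used, beyond mere definability of $p$---is to show $\ulcorner p_x \urcorner \in \acl^\eq(M_0 e)$. The set $\{p_{x'} : x' \in G \cdot x\} = \{(p \cdot g)_x : g \in G\}$ is $\Aut(\Mm/M_0 e)$-invariant and \emph{small} precisely because $p$ has boundedly many right translates; hence $\ulcorner p_x \urcorner \in \acl^\eq(M_0 e)$. Combined with $e \in \dcl^\eq(M_0, \ulcorner p_x \urcorner)$ and the fact (Theorem~\ref{codes}) that $\ulcorner p_x \urcorner$ is a real tuple, the imaginary $e$ is \emph{interalgebraic} over $M_0$ with a real tuple, and Lemma~\ref{interdef}(2) upgrades this to interdefinability over $M_0$. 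This is exactly the paper's argument. Once every $e \in X/G$ is interdefinable over the \emph{fixed} base $M_0$ with a real tuple, the uniformity you flag as the ``main obstacle'' becomes a routine compactness; the real content lies in the interalgebraicity step you skipped.
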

\begin{proof}
  Take a global definable type $p$ on $G$ with boundedly many right translates.
  Take a small model $M_0$ over which everything is defined, including the boundedly many right translates of $p$.  It suffices to show that every element of the interpretable set $X/G$ is interdefinable over $M_0$ with a real tuple.  By Lemma~\ref{interdef}(2), it suffices to show that every element of $X/G$ is \emph{interalgebraic} over $M_0$ with a real tuple.  Fix some element $e = G \cdot a \in X/G$, where $a \in X$.  Let $p \cdot a$ denote the pushforward of $p$ along the map $x \mapsto x \cdot a$.  Note that the global types $p$ and $p \cdot a$ both have dimension 1 (or less).  By Theorem~\ref{codes}, the code $\ulcorner p \cdot a \urcorner$ can be taken to be a real tuple.  We claim that $\ulcorner p \cdot a \urcorner$ is interalgebraic with $e$ over $M_0$.
  
  In one direction, $p \cdot a$ is contained in the collection
  \begin{align*}
      \mathfrak{S} &= \{p \cdot a' : a' \in G \cdot a\} \\ &= \{p \cdot (g \cdot a) : g \in G\} = \{(p \cdot g) \cdot a : g \in G\},
  \end{align*}
  which is $\Aut(\Mm/M_0e)$-invariant by the first line, and small by the second line.  It follows that $p \cdot a$ has a small number of conjugates over $M_0 e$, and so $\ulcorner p \cdot a \urcorner \in \acl^\eq(M_0 e)$.
  
  In the other direction, the type $p \cdot a$ concentrates on $G \cdot a$, so its pushforward along the $M_0$-definable map $X \to X/G$ is the constant type $x = e$.  Therefore $e \in \dcl^\eq(M_0 \ulcorner p \cdot a \urcorner)$.  This completes the proof that $e$ is interalgebraic with $\ulcorner p \cdot a \urcorner$ over $M_0$.
\end{proof}
Again, this holds without the assumption $\dim(G) = 1$.  See
\cite[Theorem~4.1]{andujar-johnson}.
\begin{corollary} \label{quots}
  Let $G$ be a definable group and $H$ be a 1-dimensional
  definable normal subgroup.  If $H$ has \textit{dfg}, then $G/H$ is
  definable and $\dim(G/H) = \dim(G) - 1$.
\end{corollary}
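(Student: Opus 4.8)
The plan is to apply Proposition~\ref{quotient} to a translation action of $H$, and then to check that the resulting definable quotient \emph{set} carries a definable \emph{group} structure of the expected dimension. Concretely, let $H$ act on $G$ by left translation, $h \cdot g = hg$. This is a definable action of the one-dimensional \textit{dfg} group $H$ on the definable set $G$, so Proposition~\ref{quotient} applies and the orbit space $G/H$ is definable: there are a definable set $Y$ and a definable surjection $\pi \colon G \to Y$ whose fibers are exactly the $H$-orbits. Since $H$ is normal, these orbits are the cosets $Hg = gH$, so up to relabeling $\pi$ is the group-theoretic quotient map, and we may use $Y$ as a definable avatar of $G/H$.

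Next I would transport the group operations. Define multiplication on $Y$ by $\pi(g_1) \cdot \pi(g_2) = \pi(g_1 g_2)$ and inversion by $\pi(g)^{-1} = \pi(g^{-1})$. Normality of $H$ makes both well defined, and they are definable because their graphs are the images of definable subsets of $G \times G \times G$ (resp.\ $G \times G$) under $\pi$ applied coordinatewise, hence definable subsets of $Y^3$ (resp.\ $Y^2$). So $Y$, with this structure, is a definable group isomorphic to $G/H$; in particular $G/H$ is definable.

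For the dimension statement: each fiber $\pi^{-1}(y)$ is a coset $Hg$, which is in definable bijection with $H$ via $h \mapsto hg$, hence has dimension $\dim H = 1$. By the additivity of dimension along definable surjections in $\pCF$ (the fiber-dimension theorem for $p$-adically closed fields), $\dim G = \dim Y + 1$, that is, $\dim(G/H) = \dim(G) - 1$.

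I do not expect a serious obstacle here: essentially all the work is in Proposition~\ref{quotient}. The only points that need a little care are (a) upgrading the definable quotient \emph{set} to a definable \emph{group}, which works precisely because $\pi$ is a definable map, so that the induced operations have definable graphs; and (b) the dimension count, which is routine given standard dimension theory for $\pCF$.
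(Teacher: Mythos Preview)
Your proof is correct and is exactly the argument the paper intends: the corollary is stated immediately after Proposition~\ref{quotient} without proof, and your write-up supplies precisely the expected details---apply the proposition to the translation action, observe that normality of $H$ lets the definable orbit space inherit a definable group law, and read off the dimension from fiber additivity.
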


\section{Proof of Theorem~\ref{th1}} \label{sec3}
Work in a model $M \models \pCF$.
\begin{theorem} \label{decomp}
  Let $M$ be a $p$-adically closed field and $G$ be a definable
  abelian group in $M$.  Then there is a definable short exact
  sequence
  \begin{equation*}
    1 \to H \to G \to C \to 1
  \end{equation*}
  such that $H$ has \textit{dfg}, $C$ has \textit{fsg}, and $C$ is
  definably compact.
\end{theorem}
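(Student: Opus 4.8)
The plan is to prove Theorem~\ref{decomp} by induction on $\dim G$, with the inductive step resting on one nontrivial input, which I will call the Key Claim: every abelian definable group in $\pCF$ that is not definably compact has a one-dimensional definable subgroup with \textit{dfg}. Everything else is already in hand: Corollary~\ref{quots} lets us quotient by such a subgroup without leaving the definable category, Lemma~\ref{dfg-ext} lets us recombine \textit{dfg} across short exact sequences, and it is known that a definably compact definable group in $\pCF$ has \textit{fsg}. (The lemmas above are stated for saturated models, so one works in the monster and descends the resulting sequence at the end.)

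Granting the Key Claim, the induction is routine. If $\dim G = 0$ then $G$ is finite, hence definably compact with \textit{fsg}, and we take $H = 1$, $C = G$. If $\dim G > 0$ and $G$ is definably compact, again take $H = 1$, $C = G$. Otherwise pick a one-dimensional definable subgroup $H_1 \le G$ with \textit{dfg}, which is normal since $G$ is abelian. By Corollary~\ref{quots}, $\bar G := G/H_1$ is definable with $\dim \bar G = \dim G - 1$, and it is again abelian, so by the inductive hypothesis there is a definable short exact sequence $1 \to \bar H \to \bar G \to C \to 1$ with $\bar H$ having \textit{dfg} and $C$ definably compact with \textit{fsg}. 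Let $H$ be the preimage of $\bar H$ in $G$; then $H$ is definable, $1 \to H_1 \to H \to \bar H \to 1$ is exact, and Lemma~\ref{dfg-ext} gives that $H$ has \textit{dfg}. Since $G/H \cong \bar G / \bar H \cong C$, the sequence $1 \to H \to G \to C \to 1$ is as required, and the recursion terminates because $\dim$ strictly drops at each step.

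The real content, and the step I expect to be the main obstacle, is the Key Claim --- presumably Lemma~\ref{hard}, which is what Appendix~\ref{ict} is set up to support. The idea is to exploit the failure of definable compactness, which yields a definable curve in $G$ with no limit point; after translating we may assume it passes through the identity. One then wants to extract from its ``direction at infinity'' an honest one-dimensional definable subgroup along which $G$ is still unbounded, i.e.\ a one-dimensional definable subgroup that is itself not definably compact. Such a subgroup automatically has \textit{dfg}: a one-dimensional not-definably-compact definable group in $\pCF$ has an end escaping to infinity in the field, and carries a definable f-generic much as the type saying ``$\val(x)$ is below everything in the base model'' is a translation-invariant definable f-generic for $(\Mm, +)$ (alternatively one invokes the classification of one-dimensional definable groups and checks the cases). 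Passing from a divergent definable curve to a divergent definable \emph{subgroup} is the delicate point, and is where one must control the topology of ict-patterns inside $G$; I expect everything downstream of it to be bookkeeping on top of Lemma~\ref{dfg-ext} and Corollary~\ref{quots}.
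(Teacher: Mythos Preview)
Your inductive argument is correct and is essentially the paper's proof, which phrases the same recursion as ``take a \textit{dfg} subgroup $H$ of maximal dimension with $G/H$ definable, and contradict maximality if $G/H$ is not definably compact''; both versions rest on precisely the three inputs you name (the Key Claim, Corollary~\ref{quots}, Lemma~\ref{dfg-ext}), together with the equivalence of \textit{fsg} and definable compactness for definable groups.

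One correction on attribution: the Key Claim is not Lemma~\ref{hard}.  In the paper it is imported as a black box from \cite[Corollary~6.11]{johnson-yao}, and the proof of Theorem~\ref{decomp} uses nothing from Section~\ref{sec5} or Appendix~\ref{ict}.  Lemma~\ref{hard} and the ict-pattern machinery are there to prove the \emph{interpretable} analogue (Theorem~\ref{int-ps}), which is a parallel thread, not an ingredient of Theorem~\ref{decomp}.  Your sketch of how one might prove the Key Claim is in the right spirit---one does start from an unbounded definable type $q$ coming from the failure of definable compactness---but the actual construction in \cite{johnson-yao} produces the subgroup as a $\varphi$-stabilizer $\stab_\varphi(\mu\cdot q)$ rather than by directly reading off a ``direction at infinity'' from a curve.
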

\begin{proof}
  For definable groups, \textit{fsg} is equivalent to definable
  compactness \cite[Theorem~1.2]{johnson-fsg}.  Say a subgroup $H
  \subseteq G$ is ``good'' if $G/H$ is definable and $H$ has
  \textit{dfg}.  For example, $H = \{1\}$ is good.  Take a good subgroup $H$ maximizing $\dim(H)$.  If $G/H$ is
  definably compact then we are done.
  Otherwise, $G/H$ is not definably
  compact.  By \cite[Corollary~6.11]{johnson-yao}, there is a 1-dimensional  definable \textit{dfg}
  subgroup of $G/H$.  This subgroup has the form $H'/H$ for some
  definable subgroup of $H$.  The short exact sequence
  \begin{equation*}
    1 \to H \to H' \to H'/H \to 1
  \end{equation*}
  shows that $H'$ has \textit{dfg} by Lemma~\ref{dfg-ext}, and that
  \begin{equation*}
    \dim(H') = \dim(H) + \dim(H'/H) = \dim(H) + 1 > \dim(H).
  \end{equation*}
  The quotient $ G/H' = (G/H)/(H'/H)$ is definable by
  Corollary~\ref{quots}, and so $H'$ is a good subgroup,
  contradicting the choice of $H$.
\end{proof}

\section{Abelian groups over $\Qp$} \label{qp-case}

\begin{fact} \label{mos-fact}
Let $G$ be a definably amenable group definable over $\Qp$.  There is an algebraic group $H$ over $\Qp$ and a definable finite-to-one group homomorphism from $G^{00}$ to $H$.
\end{fact}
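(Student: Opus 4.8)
The plan is to use the $p$-adic analytic structure carried by definable groups to reduce, via the adjoint representation, to the case of abelian groups, and then to handle abelian groups through the \textit{dfg}/\textit{fsg} decomposition of Theorem~\ref{decomp}; this is close to the argument in the source from which the fact is taken.

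For the reduction: replacing $G$ by a definable subgroup of finite index does not change $G^{00}$, and a composite of definable finite-to-one homomorphisms is again finite-to-one, so it suffices to build such a homomorphism out of $G_1^{00}$ for some finite-index definable $G_1 \le G$, and we may induct on $\dim G$. By the theorem of Onshuus and Pillay that a group definable in a $p$-adically closed field carries a canonical definable $p$-adic Lie group structure (defined over $\Qp$ here, since $G$ is), $G$ has a Lie algebra $\mathfrak{g}$, a $\Qp$-vector space with $\dim_{\Qp}\mathfrak{g} = \dim G$, and the adjoint action yields a \emph{definable} homomorphism $\operatorname{Ad}\colon G \to \operatorname{GL}(\mathfrak{g})$ into an algebraic group over $\Qp$. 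The kernel of $\operatorname{Ad}$ centralizes a small open subgroup of $G$; using definable amenability to keep $G^{00}$ under control, one argues that $\ker(\operatorname{Ad})\cap G^{00}$ is tame enough --- abelian up to bounded index --- that composing $\operatorname{Ad}$ with the inductive hypothesis reduces everything to the case where $G$ is abelian (in which $\operatorname{Ad}$ is trivial and carries no information).

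For $G$ abelian over $\Qp$, apply Theorem~\ref{decomp} to get a short exact sequence $1 \to H \to G \overset{\pi}{\to} C \to 1$ with $H$ of \textit{dfg} and $C$ definably compact with \textit{fsg}. Since $H$ has \textit{dfg} we have $H = H^{00}$, and one shows that a \textit{dfg} abelian group definable in $\Qp$ is, up to finite factors, a vector group, hence admits a finite-to-one definable homomorphism into $\mathbb{G}_a^m$. For the definably compact factor $C$, one uses that $C$ is a compact $p$-adic Lie group, together with the structure theory of such groups, to produce a finite-to-one definable homomorphism out of $C^{00}$ into an algebraic group over $\Qp$ --- concretely, the inclusion of an open subgroup into the $\Qp$-points of a product of an abelian variety, a torus, and a vector group. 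One then assembles the two pieces along the exact sequence, using $\pi(G^{00}) = C^{00}$ and the good behavior of \textit{dfg} and of $(-)^{00}$ under extensions (Lemma~\ref{dfg-ext} and the lemmas of Section~\ref{exts}).

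The main obstacle is the definably compact abelian factor: exhibiting the algebraic group and the finite-to-one map out of $C^{00}$ is essentially the algebraicity assertion of Theorem~\ref{th2}(2) --- a genuine classification of definably compact abelian groups in $\Qp$ as open subgroups, up to finite factors, of commutative algebraic groups --- and this is where real $p$-adic Lie theory, rather than soft model theory, is needed. A secondary, more technical obstacle is justifying the tameness of $\ker(\operatorname{Ad})\cap G^{00}$ in the non-abelian case; this is the step that actually uses the definable amenability hypothesis.
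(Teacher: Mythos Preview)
The paper does not prove this statement: it is stated as a \emph{Fact} and dispatched in one line by citing \cite[Theorem~2.19 and the proof of Corollary~2.22]{MOS}. The argument in \cite{MOS} proceeds by the stabilizer/group-chunk machinery for definably amenable groups in NIP (or NTP${}_2$) theories: one takes an $f$-generic type, forms its stabilizer and the associated germs, and recovers an algebraic group via a Hrushovski--Weil construction, with the finite-to-one map coming from the canonical base. In particular, the \cite{MOS} argument does \emph{not} pass through a \textit{dfg}/\textit{fsg} decomposition or through any structure theory of compact $p$-adic Lie groups; that decomposition is the new contribution of the present paper, and it is logically downstream of Fact~\ref{mos-fact}.

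Your plan therefore has a genuine circularity problem, which you partly flag yourself. For the definably compact abelian factor $C$ you say one must exhibit $C^{00}$ as (up to finite factors) an open subgroup of a commutative algebraic group, and you identify this with ``the algebraicity assertion of Theorem~\ref{th2}(2)''. But in this paper Theorem~\ref{th2}(2) is the Corollary in Section~\ref{qp-case}, and its proof \emph{invokes} Fact~\ref{mos-fact}. So within the logical structure of the paper you are assuming what you are trying to prove. If instead you intend to supply an independent proof for the compact abelian case via $p$-adic Lie theory, that is a substantial piece of work in its own right (comparable in depth to the \cite{MOS} result), and nothing in your sketch indicates how it would go. The non-abelian reduction via $\operatorname{Ad}$ has the same flavor: controlling $\ker(\operatorname{Ad})\cap G^{00}$ using definable amenability is exactly the kind of step that \cite{MOS} handles with the stabilizer theorem, and you have not proposed an alternative mechanism.

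In short: the ``proof'' here is a citation, the cited argument is orthogonal to the decomposition approach you outline, and your outline bottoms out in statements that the paper only obtains \emph{from} Fact~\ref{mos-fact}.
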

\begin{proof}
  This follows from \cite[Theorem~2.19]{MOS} via the proof of \cite[Corollary~2.22]{MOS}.
\end{proof}

\begin{theorem} \label{g00}
  If $G$ is an abelian group definable over $\Qp$, then $G^0 =
  G^{00}$.
\end{theorem}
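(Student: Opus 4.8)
The plan is to decompose $G$ into a \textit{dfg} part and an \textit{fsg} part using Theorem~\ref{decomp}, handle each separately, and recombine via Lemma~\ref{0-00-ext}. First I would apply Theorem~\ref{decomp} to get a definable short exact sequence $1 \to H \to G \to C \to 1$ with $H$ having \textit{dfg} and $C$ definably compact (equivalently, \textit{fsg}). Since $G$ is definable over $\Qp$ and $\Qp$ is algebraically closed inside $\Mm$, I would check that the construction in the proof of Theorem~\ref{decomp} can be carried out over $\Qp$: it only uses \cite[Corollary~6.11]{johnson-yao} to produce one-dimensional \textit{dfg} subgroups (which can be found over $\acl$ of the ambient parameters) together with Corollary~\ref{quots} for the quotients (whose construction, through Theorem~\ref{codes} and Lemma~\ref{var-codes}, respects the field of definition). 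Hence $H$ and $C$ may be taken definable over $\Qp$, and by Lemma~\ref{0-00-ext} it suffices to prove $H^0 = H^{00}$ and $C^0 = C^{00}$.

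For the \textit{dfg} part I would induct on $\dim H$. The case $\dim H = 0$ is trivial. If $\dim H \geq 1$, then $H$ is not definably compact (a group with both \textit{dfg} and \textit{fsg} is finite), so by \cite[Corollary~6.11]{johnson-yao} it has a one-dimensional definable \textit{dfg} subgroup $H_1$, which we may take over $\Qp$. By Corollary~\ref{quots}, $H/H_1$ is definable of dimension $\dim H - 1$, and by Lemma~\ref{dfg-ext} it has \textit{dfg}; by induction $(H/H_1)^0 = (H/H_1)^{00}$. Applying Lemma~\ref{0-00-ext} to $1 \to H_1 \to H \to H/H_1 \to 1$ reduces us to the case of a one-dimensional definable \textit{dfg} group $H_1$. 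By the classification of one-dimensional definable groups in $\pCF$ \cite{johnson-yao}, such an $H_1$ is isomorphic to $(\Qp,+)$; this group is divisible, hence has no proper finite-index subgroup (so $H_1^0 = H_1$), and its only bounded-index type-definable subgroup is itself (any such subgroup contains all sufficiently generic elements, and these generate), so $H_1^{00} = H_1 = H_1^0$.

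For the definably compact part $C$, the hypothesis $M = \Qp$ is essential. As $C$ is abelian it is definably amenable, so by Fact~\ref{mos-fact} there is an algebraic group $\tilde H$ over $\Qp$ and a definable finite-to-one homomorphism $C^{00} \to \tilde H(\Mm)$; unwinding the construction (or passing to a finite-index definable subgroup), I would arrange that this is the restriction of a definable finite-to-one homomorphism $\phi \colon C' \to \tilde H$ on a finite-index definable subgroup $C' \supseteq C^{00}$, with image $A' := \phi(C')$ a definable subgroup of $\tilde H$. Then $A'$ is definably compact (a definable image of the definably compact group $C'$), and $\phi(C^{00}) = A'^{00}$ by the surjection lemma at the start of Section~\ref{exts} (using $C'^{00} = C^{00}$); since $\ker\phi$ is finite, $\phi$ induces a surjection $C'/C'^{00} \to A'/A'^{00}$ with finite kernel, while $C/C^{00}$ surjects onto the finite group $C/C'$ with kernel $C'/C'^{00}$. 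So, by the ``an extension of a profinite group by a finite group is profinite'' argument from the proof of Lemma~\ref{0-00-ext}, it is enough to show $A'^0 = A'^{00}$. Now $A'(\Qp)$ is a compact subgroup of the $p$-adic analytic group $\tilde H(\Qp)$, hence profinite; writing $A'(\Qp) = \varprojlim_i A'(\Qp)/U_i$ over a neighborhood basis of open (hence $\Qp$-definable, finite-index) subgroups $U_i \leq A'$, a saturation argument gives $A'/\bigcap_i U_i(\Mm) \cong \varprojlim_i A'(\Qp)/U_i$, which is profinite; combined with $\bigcap_i U_i(\Mm) = A'^{00}$ this yields $A'^0 = A'^{00}$.

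The step I expect to be the main obstacle is the equality $\bigcap_i U_i(\Mm) = A'^{00}$ --- that the logic-topology quotient $A'/A'^{00}$ is precisely the profinite completion of the $p$-adic analytic group $A'(\Qp)$, with no smaller bounded-index type-definable subgroup intervening. I would expect to prove this either directly (any bounded-index type-definable subgroup of $A'$ must contain the ``infinitesimal'' subgroup $\bigcap_i U_i(\Mm)$, by a saturation argument exploiting the $p$-adic analytic structure of $A'(\Qp)$) or by invoking the structure theory for \textit{fsg} groups in $\pCF$ from \cite{johnson-fsg}. A more routine difficulty is the bookkeeping: keeping $H$, $C$, $H_1$, $C'$, and $\phi$ all definable over $\Qp$ rather than merely over $\Qp^{\eq}$, and upgrading the homomorphism on $C^{00}$ supplied by Fact~\ref{mos-fact} to one on an honest finite-index definable subgroup.
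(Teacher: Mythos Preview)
Your overall architecture---decompose via Theorem~\ref{decomp}, handle the \textit{dfg} and \textit{fsg} pieces separately, recombine with Lemma~\ref{0-00-ext}---is exactly what the paper does. But the paper dispatches both pieces in one line each by citing existing results, whereas you try to reprove those results from scratch and introduce a genuine gap in the process.

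For the \textit{dfg} piece, the paper simply cites \cite[proof of Lemma~1.15]{pillay-yao0}: \emph{any} \textit{dfg} group $H$ in an NIP theory satisfies $H^0 = H^{00}$. The argument there is direct and does not use any classification: since every left translate of the \textit{dfg} type $p$ is definable over a fixed small model $M_0$, the map $gH^{00} \mapsto g \cdot p$ embeds $H/H^{00}$ into the space of global $M_0$-definable types, and restricting to $\varphi$-types formula-by-formula exhibits $H/H^{00}$ as an inverse limit of finite sets. Your inductive reduction is unnecessary, and the step ``such an $H_1$ is isomorphic to $(\Qp,+)$'' is wrong as stated: one-dimensional \textit{dfg} groups in $\pCF$ are not all additive---the multiplicative group and various twists and finite extensions are also \textit{dfg}---and \cite{johnson-yao} does not prove such a classification.

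For the definably compact piece, the paper cites \cite[Corollary~2.4]{onshuus-pillay}: a definably compact group defined over $\Qp$ has $C^0 = C^{00}$. Your route through Fact~\ref{mos-fact} and the profinite structure of $A'(\Qp)$ is essentially an attempt to reprove that theorem; the ``main obstacle'' you correctly identify, that $\bigcap_i U_i(\Mm) = A'^{00}$, is precisely the content of the Onshuus--Pillay result, and it is not something you should expect to extract by a short saturation argument. Fact~\ref{mos-fact} is used in the paper only for the second part of Theorem~\ref{th2}, not here.

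Finally, the bookkeeping you worry about is a non-issue: Theorem~\ref{decomp} is stated and proved over an arbitrary model $M$, and applying it with $M = \Qp$ already gives $H$ and $C$ definable over $\Qp$.
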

\begin{proof}
  Theorem~\ref{decomp} gives a short exact sequence
  \begin{equation*}
    1 \to H \to G \to C \to 1
  \end{equation*}
  where $H$ has \textit{dfg} and $C$ is definably compact.  Then $C^0
  = C^{00}$ because $C$ is definably compact and defined over $\Qp$
  \cite[Corollary~2.4]{onshuus-pillay}, and $H^0 = H^{00}$ because $H$ is
  \textit{dfg} \cite[proof of Lemma~1.15]{pillay-yao0}.  Then $G^0 = G^{00}$ by
  Theorem~\ref{0-00-ext}.
\end{proof}

\begin{corollary}
  If $G$ is an abelian group definable in $\Qp$, then there is a
  finite index definable subgroup $E\sq G$ and finite subgroup $F$
  such that $E/F$ is isomorphic to an open subgroup of an algebraic
  group $A$ over $\Q$.
\end{corollary}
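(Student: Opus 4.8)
The plan is to combine Theorem~\ref{g00} with Fact~\ref{mos-fact}. By Theorem~\ref{g00} we have $G^0 = G^{00}$, so $G/G^{00}$ is profinite; in particular $G^{00}$ has finite index in $G^0$... wait, $G^{00} = G^0$, so what we actually get is that $G^0$ is itself a definable subgroup of $G$ of finite index. (Recall $G^0$ is the intersection of all definable finite-index subgroups, and $G^0 = G^{00}$ forces this intersection to stabilize at a single definable finite-index subgroup.) Set $E := G^0 = G^{00}$; this is the required finite-index definable subgroup. Now apply Fact~\ref{mos-fact} to $G$ (which is definably amenable, being abelian): there is an algebraic group $H$ over $\Qp$ and a definable finite-to-one homomorphism $\varphi : G^{00} \to H$, i.e.\ $\varphi : E \to H$. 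Let $F := \ker \varphi$, a finite subgroup of $E$. Then $\varphi$ induces an injective definable homomorphism $\bar\varphi : E/F \hookrightarrow H$.

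The next step is to identify the image $\bar\varphi(E/F)$ as an open subgroup of an algebraic group. Since $E$ is definable over $\Qp$, $E/F$ and $\bar\varphi$ are definable over $\Qp$, so $\bar\varphi(E/F)$ is a $\Qp$-definable subgroup of the algebraic group $H(\Qp)$. I would invoke the $p$-adic analogue of the fact that a definable subgroup of an algebraic group over $\Qp$ is, up to finite index, an open subgroup of an algebraic subgroup — this is essentially Pillay's result that a definable subgroup of an algebraic group over $\Qp$ contains a definable subgroup of finite index which is an open subgroup in the topological sense (a ``$p$-adic Lie subgroup''), and that such a group is commensurable with the $\Qp$-points of an algebraic group. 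More precisely: the Zariski closure $A_0$ of $\bar\varphi(E/F)$ inside $H$ is an algebraic group over $\Qp$, and $\bar\varphi(E/F)$ is a finite-index subgroup of an open subgroup of $A_0(\Qp)$. Shrinking $E$ (and correspondingly $F$) once more to the preimage of that open subgroup, we may assume $E/F$ is isomorphic to an open subgroup of $A_0(\Qp)$.

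The remaining point is the descent of the algebraic group from $\Qp$ to $\Q$. This is where I expect the main subtlety to lie, though it is largely formal: the algebraic group $A_0$ is defined over $\Qp$ by finitely many polynomial equations with coefficients in $\Qp$, and by a standard spreading-out / approximation argument — approximating the finitely many coefficients by elements of $\Q$ closely enough in the $p$-adic topology, using that $\Q$ is dense in $\Qp$ and that the relevant ``being a group of the right dimension'' conditions are open — one finds an algebraic group $A$ over $\Q$ with $A_{\Qp} \cong A_0$, hence $A(\Qp) \cong A_0(\Qp)$ as $p$-adic groups. Under this isomorphism $E/F$ becomes an open subgroup of $A(\Qp)$, which is what the corollary demands. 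The honest obstacle is making the descent argument airtight — ensuring the approximating coefficients genuinely define a group (not merely a group near the identity) of the correct dimension; one handles this by descending an affine chart of a group, or by citing the standard fact that algebraic groups over $\Qp$ up to isogeny, or at least the ones arising here, are already defined over $\overline{\Q} \cap \Qp$ and then over $\Q$ after a further finite adjustment — but since the statement only asks for $E/F$ to be isomorphic to an open subgroup ``up to finite factors'' are already absorbed into $E$ and $F$, this flexibility makes the descent routine.
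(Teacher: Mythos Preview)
There is a genuine gap in your argument: the claim that $G^0 = G^{00}$ forces $G^0$ to be \emph{definable} is false. The equality $G^0 = G^{00}$ only says that $G/G^{00}$ is profinite, not finite; the intersection of the definable finite-index subgroups need not stabilize. For a concrete example, take $G = (\Zz_p,+)$: the subgroups $p^n\Zz_p$ are definable of finite index, their intersection (in the monster) is the group of elements of infinite valuation, and one checks $G^0 = G^{00}$ equals this non-definable infinitesimal subgroup. So you cannot simply set $E := G^0$.

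The paper repairs this with a compactness argument. The homomorphism $f : G^{00} \to H$ from Fact~\ref{mos-fact} is given by a formula, and the conditions ``$f$ is a homomorphism'' and ``$f$ is finite-to-one'' are expressible and hold on the type-definable set $G^{00}$. Since $G^0 = G^{00}$, the set $G^{00}$ is the filtered intersection of the definable finite-index subgroups of $G$, so by compactness there is a single definable finite-index subgroup $E \supseteq G^{00}$ on which $f$ is still a finite-to-one homomorphism. Now take $F = \ker(f|_E)$ and replace $H$ by the Zariski closure of $f(E)$; the image then has full dimension in $H$ and is therefore open. This is the whole proof.

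A secondary point: your long final paragraph tries to descend the algebraic group from $\Qp$ to $\Q$. The paper's own proof makes no such move---it stops with $H$ over $\Qp$, as supplied by Fact~\ref{mos-fact}, and the ``$\Q$'' in the corollary's statement is evidently a typo for ``$\Qp$'' (compare the phrasing in Theorem~\ref{th2}). Your proposed descent by perturbing coefficients is in any case not correct: algebraic groups over $\Qp$ need not descend to $\Q$ (think of twists by transcendental parameters), and the approximation argument you sketch does not produce an isomorphic group.
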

\begin{proof}
  By Theorem~\ref{g00}, $G^0 = G^{00}$.  By Fact~\ref{mos-fact}, there is an algebraic group $H$ and a finite-to-one definable homomorphism $f : G^0 \to H$.  By compactness there is a finite-index subgroup $E \subseteq G$ such that $f$ extends to a finite-to-one definable homomorphism $f' : E \to H$.  Replacing $H$ with the Zariski closure of the image of $f'$, we may assume the image is an open subgroup of $H$.
\end{proof}

\section{Interpretable groups} \label{sec5}
In this section, we discuss our original approach to
Theorem~\ref{decomp}, which yielded a weaker result, only
giving an interpretable group.  However, this approach is more general
in one way---one can \emph{start} with an interpretable group.  Unfortunately, in the interpretable case we don't know how to prove the termination of the recursive process implicit in the proof of Theorem~\ref{decomp}.

\begin{proposition} \label{quotop}
  Let $G$ be an abelian definable group, let $H$ be a definable
  subgroup, and let $X = G/H$ be the interpretable quotient group.
  Consider the canonical definable manifold topology on $G$, and the
  quotient topology on $X$.
  \begin{enumerate}
  \item The quotient map $\pi : G \to X$ is an open map.
  \item The quotient topology on $X$ is definable.
  \item The quotient topology on $X$ is a group topology.
  \item The quotient topology on $X$ is Hausdorff.
  \end{enumerate}
\end{proposition}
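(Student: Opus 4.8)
The plan is to reduce everything to general topology, using that $G$, with its canonical definable manifold topology, is a topological group admitting a definable basis of open sets (the preimages of polydiscs under the chart maps), and that $H$ is normal since $G$ is abelian. Give $X = G/H$ the quotient topology, so $\pi \colon G \to X$ is continuous and a subset $V \subseteq X$ is open (resp.\ closed) iff $\pi^{-1}(V)$ is. For (1): for open $U \subseteq G$ we have $\pi^{-1}(\pi(U)) = UH = \bigcup_{h \in H} Uh$, which is open, so $\pi(U)$ is open; thus $\pi$ is open. For (2): if $\{U_t : t \in T\}$ is a definable basis of $G$, then $\{\pi(U_t) : t \in T\}$ is a basis of the quotient topology — each $\pi(U_t)$ is open by (1), and given open $V \ni x$ in $X$, any $g \in \pi^{-1}(x)$ lies in some $U_t \subseteq \pi^{-1}(V)$, whence $x \in \pi(U_t) \subseteq V$ — and it is a definable family because $\pi$ is an interpretable map; so the quotient topology is definable.

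For (3): the product map $\pi \times \pi \colon G \times G \to X \times X$ is continuous, surjective, and open (since $(\pi \times \pi)(U \times U') = \pi(U) \times \pi(U')$), so $X \times X$ carries the quotient topology determined by $\pi \times \pi$. Writing $m_G, m_X$ for multiplication and $\iota_G, \iota_X$ for inversion, the maps $m_X \circ (\pi \times \pi) = \pi \circ m_G$ and $\iota_X \circ \pi = \pi \circ \iota_G$ are continuous, so $m_X$ and $\iota_X$ are continuous by the universal property of the quotient topology; hence the quotient topology on $X$ is a group topology.

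For (4): a $T_1$ topological group is $T_2$, so $X$ is Hausdorff iff $\{1_X\}$ is closed iff $H = \pi^{-1}(1_X)$ is closed in $G$. To see $H$ is closed, note $\overline{H}$ is definable (the topology has a definable basis) and is a subgroup of $G$ (the closure of a subgroup of a topological group is a subgroup). The frontier inequality for definable sets in $\pCF$ gives $\dim(\overline{H} \setminus H) < \dim \overline{H}$, and since $\overline{H} = H \cup (\overline{H} \setminus H)$ this forces $\dim H = \dim \overline{H}$; a definable subgroup of full dimension has finite index, so $H$ is open — hence closed — in $\overline{H}$, and being dense, $H = \overline{H}$. (Alternatively, one may invoke that definable subgroups of definable groups in $\pCF$ are closed.) I expect part (4) to be the only step with genuine content: parts (1)--(3) are formal once $G$ is known to be a topological group with a definable basis of opens, and the crux is the closedness of $H$.
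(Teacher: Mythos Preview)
Your proposal is correct and follows essentially the same line as the paper: parts (1) and (2) are verbatim the paper's argument, and part (4) hinges on the same frontier inequality $\dim(\overline{H}\setminus H)<\dim H$.

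Two small differences in execution are worth noting. For (3), the paper verifies continuity of $(x,y)\mapsto xy^{-1}$ directly by lifting neighborhoods through $\pi$; your argument via the universal property of the open surjection $\pi\times\pi$ is equivalent and arguably cleaner. For (4), the paper finishes in one step: since right translation by $h\in H$ is a homeomorphism preserving $H$, the set $\overline{H}\setminus H$ is a union of $H$-cosets, each of dimension $\dim H$, so the frontier inequality forces $\overline{H}\setminus H=\varnothing$ immediately. Your detour through ``$H$ has finite index in $\overline{H}$, hence is open, hence closed, hence equals its closure'' is correct but longer; the coset observation avoids invoking the finite-index fact.
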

\begin{proof}
  \begin{enumerate}
  \item If $U \subseteq G$ is open, then $\pi^{-1}(\pi(U)) = U \cdot H
    = \bigcup_{h \in H} (U \cdot h)$ which is open.  By definition of
    the quotient topology, $\pi(U)$ is open.
  \item If $\mathcal{B}$ is a definable basis of opens on $G$, then
    $\{\pi(U) : U \in \mathcal{B}\}$ is a definable basis of opens on
    $X$, because $\pi$ is an open map.
  \item We claim $(x,y) \mapsto x \cdot y^{-1}$ is continuous on $X$.
    Fix $a, b \in X$.  Let $U \subseteq X$ be an open neighborhood of
    $a \cdot b^{-1}$.  Take $\tilde{a}, \tilde{b} \in G$ lifting $a$
    and $b$.  Then $\tilde{a} \cdot \tilde{b}^{-1} \in \pi^{-1}(U)$,
    which is open.  By continuity of the group operations on $G$,
    there are open neighborhoods $V \ni \tilde{a}$ and $W \ni
    \tilde{b}$ such that $x \in V, ~ y \in W \implies x \cdot y^{-1}
    \in \pi^{-1}(U)$.  Because $\pi$ is an open map, $\pi(V)$ and
    $\pi(W)$ are open neighborhoods of $a$ and $b$, respectively.  If
    $x \in \pi(V)$ and $y \in \pi(W)$, then $x \cdot y^{-1} \in U$,
    because we can write $x = \pi(\tilde{x}), ~ y = \pi(\tilde{y})$
    for $\tilde{x} \in V, ~ \tilde{y} \in W$, and then $x \cdot y^{-1}
    = \pi(\tilde{x} \cdot \tilde{y}^{-1}) \in \pi(\pi^{-1}(U)) = U$.
    This proves continuity of $x \cdot y^{-1}$ at $(a,b)$.
  \item Because the quotient topology is a group topology, it suffices
    to show that $\{1_X\}$ is closed.  By definition of the quotient topology, it suffices to show that $H$ is closed in $G$.  On definable manifolds, the frontier of a set
    is lower-dimensional than the set itself \cite[Theorem~3.5]{p-minimal-cells}:
    \begin{equation*}
      \dim(\overline{H} \setminus H) < \dim(H).
    \end{equation*}
    But $\overline{H} \setminus H$ is a union of cosets of $H$, and each coset has dimension $\dim(H)$.  Therefore
    $\overline{H} \setminus H$ must be empty, and $H$ is
    closed. \qedhere
  \end{enumerate}
\end{proof}

\begin{definition} \label{mdgroup}
  A \emph{manifold-dominated group} is an interpretable group $X$ with
  a Hausdorff definable group topology such that there is a definable manifold
  $\tilde{X}$ and an interpretable surjective continuous open map $f :
  \tilde{X} \to X$.
\end{definition}
In the setting of Proposition~\ref{quotop}, $X$ is manifold
dominated via the map $G \to X$.
\begin{remark}
  If $X$ is \emph{any} interpretable group, then there is a definable
  group topology $\tau$ on $X$ making $(X,\tau)$ be manifold-dominated
  \cite[Theorem~5.10]{admissible}.  Moreover, $\tau$ is uniquely
  determined, though the manifold $\tilde{X}$ is not.  This motivates
  working in the more general context of manifold-dominated abelian
  groups, rather than the special case of quotient groups $G/H$.
\end{remark}
\begin{theorem} \label{int-ps}
  Let $X$ be a manifold-dominated interpretable abelian group.
  Suppose $X$ is not definably compact.  Then there is an
  interpretable subgroup $X' \subseteq X$ with the following
  properties:
  \begin{enumerate}
  \item $X'$ is not definably compact.
  \item $\dpr(X') = 1$.
  \item $X'$ has \textit{dfg}.
  \end{enumerate}
\end{theorem}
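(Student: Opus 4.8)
The plan is to follow the proof of the definable analogue \cite[Corollary~6.11]{johnson-yao} --- extract a one-dimensional \textit{dfg} subgroup from a suitable ``type at infinity'' on $X$ --- while paying much closer attention to dp-rank, since we have left the definable category.

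The first step is to turn the failure of definable compactness into a concrete ``escape to infinity''. Fix a definable manifold $\tilde X$ and an interpretable surjective continuous open map $f : \tilde X \to X$. Since $f$ is open and continuous and $X$ is not definably compact, cell decomposition on $\tilde X$ together with the curve-characterization of definable compactness in $p$-minimal structures yields a definable curve $\gamma$ into $X$ and a definable ``endpoint'' type $e$ on its domain such that the pushforward $p := \gamma_* e$ is a global definable type on $X$ which does not specialize to any point of $X$.

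The second step is to extract the subgroup. The natural candidate is the left stabilizer $\stab(p) = \{g \in X : g \cdot p = p\}$, possibly after first replacing $p$ by the definable type of a difference $a - a'$ of two independent realizations of $p$ (which still escapes to infinity) so as to ensure that $p$ concentrates on a single coset of $\stab(p)$. Making this into a genuine \emph{interpretable} subgroup requires some care: one can either cite that in the NIP theory $\pCF^\eq$ the relevant intersection of a uniformly definable family of subgroups is a finite subintersection (Baldwin--Saxl), or build the subgroup by hand from the germ of $\gamma$; in either case one obtains an interpretable subgroup $X'$ over $\ulcorner p \urcorner$ on a coset of which $p$ concentrates. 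Granting this, $p$ is then (a translate of) an $X'$-translation-invariant global definable type on $X'$, so $X'$ has \textit{dfg}; and $X'$ is not definably compact, since $p$ concentrates on a coset of $X'$ and escapes to infinity.

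The last --- and genuinely hard --- step is $\dpr(X') = 1$. The inequality $\dpr(X') \ge 1$ is clear. For $\dpr(X') \le 1$ I would use the freedom in the choice of $\gamma$ to arrange that $X'$ is \emph{topologically} one-dimensional near the identity (this is exactly the force of building the escape from a single curve), and then invoke the analysis of ict-patterns in interpretable topological groups from Appendix~\ref{ict} --- as packaged in Lemma~\ref{hard} --- to rule out an ict-pattern of depth $2$ in $X'$. I expect the main obstacles to be: (a) arranging the escape to be ``thin'' enough that $X'$ is topologically one-dimensional yet still non-compact, which is the interpretable substitute for ``peeling off exactly one dimension'' and cannot be handled by a naive dimension induction because interpretable subgroups of $X$ come with no a priori dimension theory; and (b) the step from ``topologically one-dimensional'' to ``dp-rank $1$'', which is precisely what forces the ict-pattern appendix into the argument. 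The remaining points --- interpretability of the subgroup, \textit{dfg}, non-definable-compactness, and the production of the escaping curve $\gamma$ via $f$ --- should be routine given the tools already developed.
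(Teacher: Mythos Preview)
Your overall shape --- produce a one-dimensional escaping definable type on $X$ from a curve in $\tilde{X}$, extract a subgroup from it, and bound the dp-rank via Lemma~\ref{hard} --- matches the paper. But the specific subgroup you propose, $\stab(p)$ (or the stabilizer of the difference type), does not work. Already for $X = \Qp^2$ and the escaping type along the parabola $t \mapsto (t,t^2)$, one checks that $\stab(p) = \{0\}$: the type remembers the algebraic relation $y = x^2$, and no nontrivial translate preserves it. The ``difference trick'' $a - a'$ does not repair this; it is a stable-group-theory device that relies on $p$ being generic in some group to begin with, and here it is not --- the difference type still carries the asymptotic relation $y \approx x^2$ and again has trivial stabilizer. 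So there is no reason for $p$ to concentrate on a coset of $\stab(p)$, and your \textit{dfg} argument, which hinges on exactly that, collapses with it. This is not a matter of choosing $\gamma$ more cleverly: in the interpretable setting there is no notion of ``straight'' curve to fall back on.

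The paper's fix is to thicken by the infinitesimal type: one works with the $\varphi$-stabilizers $\stab_\varphi(\mu \cdot q)$ of the $K$-definable partial type $\mu \cdot q$, not with $\stab(q)$. The crucial identification (Lemma~\ref{equals}, imported from \cite[\S5]{johnson-yao}) is
\[
\bigcap_{\varphi} \stab_\varphi(\mu \cdot q)(L) \;=\; \bigcap_{\psi \in q} \st^{\Mm}_L(\psi(\Mm) b^{-1}),
\]
and Lemma~\ref{key} then shows each set on the right is unbounded; by saturation, every $\stab_\varphi(\mu \cdot q)$ is unbounded, hence not definably compact. Your dp-rank step is then carried out on the right-hand side: since $q$ is the pushforward of a one-dimensional type, some $\psi \in q$ has $\dpr(\psi(\Mm)) \le 1$, so Lemma~\ref{hard}(2) gives $\dpr(\st^{\Mm}_L(\psi(\Mm)b^{-1})) \le 1$, and saturation picks out a single $\stab_\varphi(\mu \cdot q)$ contained in it. Finally, \textit{dfg} is \emph{not} obtained by exhibiting an invariant type on $X'$ as you propose; instead one shows $X'$ is not \textit{fsg} (generics would force $X'$ to be bounded) and invokes the dp-rank~1 dichotomy from \cite{pillay-yao}. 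The infinitesimal thickening and the standard-part description are the missing ideas in your sketch.
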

Theorem~\ref{int-ps} is an analogue of \cite[Theorem~6.8,
  Corollary~6.11]{johnson-yao}, and the proof is similar.
Nevertheless, we sketch the proof for completeness.

For the rest of the section, work in a monster model $\Mm$.  Fix a definable manifold $\tilde{X}$, an interpretable abelian group $X$ with a Hausdorff definable group
topology, and an interpretable continuous surjective open map $\pi :
\tilde{X} \to X$.  Also fix a small model $K$ over which everything is defined.
\begin{definition}
  If $S$ is an interpretable topological space (in $\pCF$) and $x_0
  \in S$, then a \emph{good neighborhood basis} of $x_0$ is an
  interpretable family $\{O_t\}_{t \in \Gamma}$ with the following
  properties:
  \begin{enumerate}
  \item $\{O_t\}_{t \in \Gamma}$ is a neighborhood basis of $x_0$.
  \item $t \le t' \implies O_t \subseteq O_{t'}$.
  \item Each set $O_t$ is clopen and definably compact.
  \item $\bigcup_t O_t = S$.
  \end{enumerate}
\end{definition}
This is more general than the definition in
\cite[Definition~2.27]{johnson-yao}, since we are considering
topological spaces rather than topological groups.  The
definition here is slightly weaker, since we do not require $O_t^{-1}
= O_t$ when $S$ is a group.

Fix some element $\tilde{1} \in \tilde{X}$ lifting $1 \in X$.  By the
proof of \cite[Proposition~2.28]{johnson-yao}, there is a good
neighborhood basis $\{O_t\}_{t \in \Gamma}$ of $\tilde{1}$ in
$\tilde{X}$.  Let $V_t = \pi(O_t)$.  Then $\{V_t\}_{t \in \Gamma}$ is a good neighborhood
basis of $1$ in $X$.  The analogue of
\cite[Proposition~2.29]{johnson-yao} holds, via the same proof:
\begin{enumerate}
\item For any $t \in \Gamma$, there is $t' \in \Gamma$ such that
  $V_{t'} \cdot V_{t'}^{-1} \subseteq V_t$.
\item For any $t \in \Gamma$, there is $t'' \in \Gamma$ such that $V_t
  \cdot V_t^{-1} \subseteq V_{t''}$.
\end{enumerate}
Say that a set $S \subseteq X$, not necessarily interpretable, is
\emph{bounded} if $S \subseteq V_t$ for some $t \in \Gamma$.  As in
\cite[Proposition~2.10]{johnson-yao}, $S$ is bounded if and only if
$S$ is contained in a definably compact subset of $X$.
If $A, B \subseteq X$, let $A \diamond B$ denote the set
\begin{equation*}
  \{g \in A : gB \cap A = \varnothing\},
\end{equation*}
as in \cite[\S4.1]{johnson-yao}.  Let $A \diamond B \setminus C$ mean
$A \diamond (B \setminus C)$.
\begin{lemma}\label{old-7}
Let $I \subseteq X$ be an unbounded interpretable set.  Let $A \subseteq X$ be bounded, but not necessarily interpretable.  Then there is $t \in \Gamma_{\Mm}$ such that $I \diamond V_t
  \setminus A$ is bounded.
\end{lemma}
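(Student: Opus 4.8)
The plan is to reduce to the case where $A$ is one of the basic neighborhoods $V_s$, and then to rule out, by contradiction, the possibility that $I \diamond (V_t \setminus A)$ is unbounded for every $t$. For the reduction: since $A$ is bounded, $A \subseteq V_s$ for some $s \in \Gamma_\Mm$. The operation $B \mapsto I \diamond B$ is antitone in $B$ (if $B \subseteq B'$ then $I \diamond B' \subseteq I \diamond B$), and $V_t \setminus V_s \subseteq V_t \setminus A$, so $I \diamond (V_t \setminus A) \subseteq I \diamond (V_t \setminus V_s)$; hence it suffices to find $t$ for which $I \diamond (V_t \setminus V_s)$ is bounded. The gain is that every set now in play is interpretable: writing $J_t := I \diamond (V_t \setminus V_s)$, we have
\[
  J_t \;=\; \{\, g \in I : I \cap g V_t \subseteq g V_s \,\},
\]
the interpretable set of $g \in I$ that are ``$(s,t)$-isolated'' in $I$, meaning no point of $I$ falls in the annulus $gV_t \setminus gV_s$. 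Since $(V_t \setminus V_s)_t$ is increasing, the family $(J_t)_t$ is decreasing.

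Now suppose for contradiction that $J_t$ is unbounded for every $t \in \Gamma_\Mm$; each $J_t$ is then in particular nonempty and is contained in no $V_u$. Pass to a sufficiently saturated $\Mm^+ \succeq \Mm$. Using that $(J_t)_t$ is decreasing, the partial type over $\Mm$ asserting ``$g \in J_t$ for all $t \in \Gamma_\Mm$'' together with ``$g \notin V_u$ for all $u \in \Gamma_\Mm$'' is finitely satisfiable in $\Mm$, hence realized by some $g \in X(\Mm^+)$. Set $X^{\mathrm{fin}} := \bigcup_{t \in \Gamma_\Mm} V_t(\Mm^+)$, which is a subgroup of $X(\Mm^+)$ because the good neighborhood basis satisfies $V_t \cdot V_t^{-1} \subseteq V_{t''}$. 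Then $g \in I(\Mm^+)$, $g \notin X^{\mathrm{fin}}$, and taking the union over $t \in \Gamma_\Mm$ of the inclusions $I(\Mm^+) \cap g V_t \subseteq g V_s$,
\[
  I(\Mm^+) \cap g X^{\mathrm{fin}} \;\subseteq\; g V_s(\Mm^+).
\]
In other words, $g$ is a point of $I$ lying ``at infinity'' relative to $\Mm$ such that $I$ does not come within $X^{\mathrm{fin}}$ of $g$ except inside the fixed bounded translate $gV_s$.

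The remaining step, which I expect to be the main obstacle, is to show that this configuration is impossible: an unbounded interpretable subset $I \subseteq X$ cannot have a point at infinity that is isolated, modulo a fixed bounded ball, from the rest of $I$ at \emph{every} standard scale. This is where the tameness of $\pCF$ must be used. The expected argument is that an interpretable set which is unbounded must, arbitrarily far out, contain pieces of positive dp-rank, and that such a piece near $g$ forces some point of $I \setminus gV_s$ into $gV_t$ for some $t \in \Gamma_\Mm$, contradicting the displayed inclusion. Making this precise requires transporting the statement through the manifold cover $\pi : \tilde X \to X$ so as to work upstairs on the definable manifold $\tilde X$ with the good neighborhood basis $\{O_t\}$, and then invoking the dimension and dp-rank theory of interpretable sets in $\pCF$ together with the topological input of Appendix~\ref{ict}. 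This mirrors the proof of the corresponding statement in \cite{johnson-yao}, from which the present lemma is adapted, so only a sketch is needed; the genuinely delicate point is the passage from ``$I$ unbounded'' to ``$I$ is fat arbitrarily far out'' in the interpretable, manifold-dominated setting.
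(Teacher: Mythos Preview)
Your reduction to $A = V_s$ is correct and standard, and your compactness setup leading to a single point $g \in I(\Mm^+)$ with $I(\Mm^+) \cap gX^{\mathrm{fin}} \subseteq gV_s(\Mm^+)$ is clean. However, the proof stops at precisely the point that matters: you explicitly flag the contradiction as ``the main obstacle'' and then do not supply it. The sketch you offer---that an unbounded interpretable set must contain ``pieces of positive dp-rank arbitrarily far out'' which somehow force a point of $I \setminus gV_s$ into $gV_t$---is not an argument, and I do not see how to make it one from the single point $g$. Nothing about the local picture near $g$ in its nonstandard $X^{\mathrm{fin}}$-coset contradicts $I$ being unbounded; the cosets are disjoint, so $I(\Mm) \subseteq X^{\mathrm{fin}}$ never meets $gX^{\mathrm{fin}}$ at all. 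Your invocation of Appendix~\ref{ict} is also misplaced: that appendix produces ict-patterns with good topological properties and is used only in Lemma~\ref{hard}, not here.

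The paper itself simply cites Lemmas~4.9--4.11 of \cite{johnson-yao} and notes that finite dp-rank of $X$ is the only additional input needed. The shape of that argument is different from yours: rather than compactifying to one point at infinity, one stays in $\Mm$ and iterates. Knowing that $J_t$ is unbounded for every $t$ lets one choose, inductively, elements $g_1, g_2, \ldots \in I$ at rapidly increasing scales so that the sets $g_n V_s \cap I$ (or suitable annuli) are pairwise separated; this yields an ict-pattern of unbounded depth in $X$, contradicting $\dpr(X) \le \dpr(\tilde X) = \dim(\tilde X) < \infty$. Your passage to a single limit $g$ collapses exactly the iterative structure that drives this dp-rank contradiction. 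If you want to complete the proof rather than cite \cite{johnson-yao}, you should abandon the $\Mm^+$ detour and instead build the sequence directly.
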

\begin{proof}
  The proofs of Lemmas~4.9, 4.10, 4.11 in \cite{johnson-yao} work
  here, after making a couple trivial changes.  The interpretable
  group $X$ has finite dp-rank because $\dpr(X) \le \dpr(\tilde{X}) =
  \dim(\tilde{X}) < \infty$.
\end{proof}
Recall our assumption that $\pi : \tilde{X} \to X$ is $K$-interpretable for some small model $K$.  Fix $|K|^+$-saturated $L$ with $K \preceq L \preceq \Mm$.  If $\Sigma$ is a
definable type or definable partial type over $K$, then $\Sigma^L$ denotes its
canonical extension over $L$.  (See \cite[Definition~2.12]{PS} for
definability of partial types.  When $\Sigma$ is complete, $\Sigma^L$
is the heir of $\Sigma$.)
\begin{lemma} \label{pq}
  There is a 1-dimensional definable type $p \in S_{\tilde{X}}(K)$
  whose pushforward $q = \pi_* p$ has the following properties:
  \begin{enumerate}
  \item $q$ is ``unbounded'' over $K$, in the sense that $q$ does not
    concentrate on any $K$-interpretable bounded set, or equivalently, $q$ does
    not concentrate on $V_t$ for any $t \in \Gamma_K$.
  \item Similarly, the heir $q^L$ is unbounded over $L$.
  \item If $b \in X$ realizes $q$ and $b \notin V_t$ for any
    $t \in \Gamma_L$, then $b$ realizes $q^L$.
  \end{enumerate}
\end{lemma}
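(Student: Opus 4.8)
The plan is to follow the method of \cite{johnson-yao} closely, as the statement already suggests. The substantial part is to produce the $1$-dimensional definable type $p$ whose pushforward $q = \pi_*p$ satisfies (1); properties (2) and (3) should then fall out of the construction together with standard facts about heirs.

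For (1): since $X$ is not definably compact it is not bounded, so $X \neq V_t$ and hence $U_t := \tilde{X} \setminus \pi^{-1}(V_t) \neq \varnothing$ for every $t \in \Gamma$. Each $O_t$ is clopen and definably compact, so each $V_t$ is clopen, each $\pi^{-1}(V_t)$ is clopen, and the $U_t$ form a decreasing family of nonempty clopen sets; in particular the partial type $\Sigma_0(x) := \{x \in U_t : t \in \Gamma_K\}$ is consistent. I would first note that $\Sigma_0$ has no realization in $\acl(K)$: if $a \in \acl(K)$ realized $\Sigma_0$, its finite orbit $F$ over $K$ would be $K$-interpretable with $\pi(F)$ a finite subset of $X$, so $\pi(F) \subseteq V_{\gamma}$ for some $\gamma$; since $\ulcorner \pi(F) \urcorner \in \dcl^\eq(K)$, $\Gamma(K) \preceq \Gamma$, and $\bigcup_t V_t = X$, one can take $\gamma \in \Gamma_K$, contradicting $F \subseteq U_\gamma$. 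I would then run the argument of \cite[\S4]{johnson-yao}: feed $\tilde{X}$ through $\pCF$ cell decomposition, note that a finite union of bounded sets is bounded so that some cell still meets every $U_t$, and iterate a dimension reduction to arrive at a $1$-dimensional $K$-definable locus $C \subseteq \tilde{X}$ on which $\pi$ escapes every $V_t$ with $t \in \Gamma_K$. Taking $p$ to be the generic type of the relevant end of $C$ (arranging that this end has a shape for which that generic type is $K$-definable) gives the required $p$, with $q = \pi_*p$ unbounded over $K$. The hard part is exactly this last step: the dimension reduction, and the bookkeeping with congruence classes needed to make the escaping ``robust,'' in the sense that $C \cap \pi^{-1}(V_t)$ is bounded away from the end of $C$ for every $t \in \Gamma_K$; this is where I would lean on \cite{johnson-yao}.

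Granting (1), property (2) should be immediate from the definition of the heir: if ``$x \in V_t$'' belonged to $q^L$ for some $t \in \Gamma_L$, then ``$x \in V_{t'}$'' would belong to $q$ for some $t' \in \Gamma_K$, contradicting (1). For (3): given $b \models q$ with $b \notin V_t$ for all $t \in \Gamma_L$, I would lift $b$ to a realization $b^\ast$ of $p$ in an elementary extension with $\pi(b^\ast) = b$ (possible because $q = \pi_*p$); since pushforward commutes with taking heirs, it then suffices to show $b^\ast \models p^L$. As $b^\ast \models p$ is the generic type at the end of the $K$-definable curve $C$, the element $b^\ast$ already carries the ball and congruence data of $p^L$, so $b^\ast \models p^L$ unless $b^\ast$ lies in some $L$-definable ball that meets the end of $C$ but is ``$K$-small''; by the robustness of the escaping, the $\pi$-image of such a ball is bounded over $L$, hence contained in some $V_t$ with $t \in \Gamma_L$ — contradicting $\pi(b^\ast) = b \notin V_t$. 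So $b^\ast \models p^L$, and pushing forward gives $b \models q^L$.
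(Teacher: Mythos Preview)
Your route diverges sharply from the paper's, and the step you yourself flag as ``the hard part'' is exactly what the paper avoids. The paper constructs $p$ in one stroke using definable Skolem functions: take $u \in \Mm$ with $v(u) > \Gamma_K$, so that $\tp(u/K)$ is the definable $1$-type of an infinitesimal; set $\gamma = v(u)$, note that $\pi^{-1}(X \setminus V_\gamma)$ is a nonempty $Ku$-definable subset of $\tilde X$, and pick $\beta_0 = f(u) \in \pi^{-1}(X \setminus V_\gamma)$ via a $K$-definable Skolem function $f$. Then $p := \tp(\beta_0/K) = f_*\tp(u/K)$ is automatically $K$-definable of dimension $\le 1$, and $\pi(\beta_0) \notin V_\gamma \supseteq V_t$ for every $t \in \Gamma_K$, giving (1) with no cell decomposition, no curve $C$, and no robustness bookkeeping. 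Your proposed iterated dimension reduction to a $K$-definable curve whose ``end'' carries a $K$-definable generic type might be salvageable, but as written it is a sketch with the crucial step left open, and it is simply unnecessary once one remembers Skolem functions.

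For (3) the paper is again cleaner than your ball-and-congruence analysis. Having lifted $b$ to some $\beta \models p$ with $\pi(\beta) = b$ (via an automorphism over $K$), one observes that $\beta \in O_t$ would force $b = \pi(\beta) \in \pi(O_t) = V_t$; hence the hypothesis on $b$ yields $\beta \notin O_t$ for all $t \in \Gamma_L$. At this point one invokes \cite[Lemma~2.25]{johnson-yao}, which says precisely that a realization of a definable type on the manifold $\tilde X$ escaping every $O_t$ with $t \in \Gamma_L$ must realize the heir $p^L$; pushing forward gives $b \models q^L$. The point is that the relation $V_t = \pi(O_t)$ lets you transport the problem from $X$ back to $\tilde X$, where the needed lemma already exists, rather than analyze $L$-definable balls near the end of a curve you have not actually constructed.
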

\begin{proof}
  Take $u \in \Mm$ with $v(u) > \Gamma_K$.  In other words, $u$ is
  infinitesimally close to 0 over $K$.  Then $\tp(u/K)$ is definable.
  Let $\gamma = v(u)$.  As $X$ is not definably compact, $V_\gamma \ne
  X$.  The set $\pi^{-1}(X \setminus V_\gamma)$ is a non-empty
  $Ku$-definable subset of $\tilde{X}$.  By definable Skolem
  functions, there is $\beta_0 \in \pi^{-1}(X \setminus V_\gamma)$
  with $\beta_0 \in \dcl(Ku)$.  Then $\beta_0 = f(u)$ for some
  $K$-definable function $f$.  Let $p = \tp(\beta_0/K)$.  Then $p =
  f_*(\tp(u/K))$, so $p$ is definable.  Let $b_0 = \pi(\beta_0)$ and
  let $q = \pi_* p = \tp(b_0/K)$.  By choice of $\beta_0$, $b_0 =
  \pi(\beta_0) \notin V_\gamma$, which implies $b_0 \notin V_t
  \subseteq V_\gamma$ for any $t \in \Gamma_K$.  Thus $q$ is unbounded
  over $K$.  As $q^L$ is the heir, it is similarly unbounded over $L$.

  Finally, suppose that $b$ satisfies the assumptions of (3).  Then
  $\tp(b/K) = q = \tp(b_0/K)$, so there is $\sigma \in \Aut(\Mm/K)$
  with $\sigma(b_0) = b$.  Let $\beta = \sigma(\beta_0)$.  Then
  $(b,\beta) \equiv_K (b_0,\beta_0)$, and in particular $\beta$
  realizes $p$ and $\pi(\beta) = b$.  Recall the sets $O_t$ used to define $V_t$.  If $\beta \in O_t$ for some $t \in
  \Gamma_L$, then $b = \pi(\beta) \in \pi(O_t) = V_t$, contradicting
  the assumptions.  Therefore, $\beta \notin O_t$ for any $t \in
  \Gamma_L$.  By \cite[Lemma~2.25]{johnson-yao}, $\beta$ realizes $p^L$.
  Then $b = \pi(\beta)$ realizes $\pi_* (p^L) = q^L$.
\end{proof}
Fix $p, q$ as in Lemma~\ref{pq}.  Fix $\beta \in \tilde{X}$ realizing
$p^L$ and let $b = \pi(\beta) \in X$.  Then $b$ realizes $q^L$.

We will make use of the notation and facts from
\cite[\S5]{johnson-yao}, applied to the group $X$ and the definable
type $q$.  In particular, $\mu$ is the infinitesimal partial type of $X$ over
$K$, $\mu^L$ is the infinitesimal partial type of $X$ over $L$, and
$\st^{\Mm}_L$ is the standard part map, a partial map from $X$ to
$X(L)$.  The domain of $\st^{\Mm}_L$ is the subgroup $\mu^L(\Mm) \cdot
X(L)$ of points in $X$ infinitesimally close to points in $X(L)$.  If
$Y \subseteq X$, then $\st^{\Mm}_L(Y)$ denotes the image of $Y \cap
(\mu^L(\Mm) \cdot X(L))$ under $\st^{\Mm}_L$.

The following lemma takes the place of \cite[Fact~6.3]{johnson-yao}.
\begin{lemma} \label{hard}
  Suppose $Y \subseteq X$ is $\beta$-interpretable.
  \begin{enumerate}
  \item The set $\st^{\Mm}_L(Y) \subseteq X(L)$ is interpretable (in the
    structure $L$)
  \item $\dpr(\st^{\Mm}_L(Y)) \le \dpr(Y)$.
  \end{enumerate}
\end{lemma}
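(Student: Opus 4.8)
The plan is to describe $\st^{\Mm}_L(Y)$ concretely and then read both statements off the description. The starting observation is that for $y \in X(L)$ we have $y \in \st^{\Mm}_L(Y)$ if and only if $Y \cap yV_t \neq \varnothing$ for every $t \in \Gamma_L$. Indeed, $y \in \st^{\Mm}_L(Y)$ means exactly that $Y$ meets $y\cdot\mu^L(\Mm) = \bigcap_{t \in \Gamma_L} yV_t$ (any point of this intersection automatically lies in $\dom(\st^{\Mm}_L) = \mu^L(\Mm)\cdot X(L)$ and maps to $y$), and since the $V_t$, $t \in \Gamma_L$, form a directed family, saturation of $\Mm$ turns ``$Y$ meets the intersection'' into ``$Y$ meets each $yV_t$''. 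This exhibits $\st^{\Mm}_L(Y)$ as the trace on $X(L)$ of a small intersection of $\beta$-interpretable sets.

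For part (1), I would promote this to interpretability over $L$ using that $\beta$ realises the heir $p^L$ of the $K$-definable type $p = \tp(\beta/K)$. Write $Y = \phi(X,\beta)$ with $\phi$ over $K$, and let $\theta(w,z,t)$ be the $K$-formula expressing $\exists x\,(x \in zV_t \wedge \phi(x,w))$. Since $\tp(\beta/L)$ is the heir of $\tp(\beta/K)$, its defining scheme provides a $K$-interpretable set $d_p\theta \subseteq X \times \Gamma$ such that $\models \theta(\beta,z,t) \iff (z,t) \in d_p\theta$ for all $(z,t) \in X(L) \times \Gamma_L$. Hence, for $y \in X(L)$, we get $y \in \st^{\Mm}_L(Y)$ iff $(y,t) \in d_p\theta$ for all $t \in \Gamma_L$, and since $y \in L \preceq \Mm$ and $d_p\theta$ is over $K$, this universal quantifier may be taken in $\Mm$. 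Therefore $\st^{\Mm}_L(Y) = W(L)$, where $W = \{ y \in X : \forall t\ (y,t) \in d_p\theta \}$ is $K$-interpretable; in particular $\st^{\Mm}_L(Y)$ is interpretable over $L$, and by elementarity $\dpr(\st^{\Mm}_L(Y)) = \dpr(W)$.

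For part (2), suppose $\dpr(W) \geq n$ and fix an ict pattern of depth $n$ witnessing this. Here I would use the main result of Appendix~\ref{ict}: since $W$ is an interpretable subset of the manifold-dominated group $X$, the pattern may be taken so that the sets it involves are clopen subsets of $X$. As $W$ is defined over $K$ and $L$ is $\aleph_1$-saturated, I would also arrange that the entire pattern --- the mutually indiscernible arrays together with a realisation $z_\eta \in W(L)$ for every path $\eta$ --- lies in $L$, so that the clopen pattern sets are $L$-interpretable. For each $\eta$, since $z_\eta \in W(L) = \st^{\Mm}_L(Y)$ there is $x_\eta \in Y$ with $\st^{\Mm}_L(x_\eta) = z_\eta$. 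Now for any $L$-interpretable clopen $U \subseteq X$ and any $x \in \dom(\st^{\Mm}_L)$ one has $x \in U \iff \st^{\Mm}_L(x) \in U$: both $U$ and $X \setminus U$ are $L$-open, and a point of $\Mm$ infinitesimally close over $L$ to a point $z \in X(L)$ of an $L$-open set $U$ lies in $U$, because $\{V_t\}$ is a $K$-definable neighbourhood basis and $z \in L$, so some single $t \in \Gamma_L$ already satisfies $zV_t \subseteq U$. Consequently each $x_\eta$ realises exactly the pattern conditions of $z_\eta$ with respect to the same (unchanged, hence still mutually indiscernible) arrays, so those arrays witness $\dpr(Y) \geq n$. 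Thus $\dpr(\st^{\Mm}_L(Y)) = \dpr(W) \leq \dpr(Y)$.

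The hard part is the clopen refinement of ict patterns, which is precisely the content of Appendix~\ref{ict}: a priori the formulas cutting out the pattern for $W$ bear no relation to the topology on $X$, and then a lift $x_\eta$ of $z_\eta$ has no reason to satisfy them --- the clopen case is exactly the case that is transported by the standard part map. A secondary, bookkeeping point --- handled by the saturation of $L$ --- is to keep this ict pattern inside $L$, so that the clopen transfer fact, which needs $L$-definable opens, applies.
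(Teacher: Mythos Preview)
Your approach is essentially the paper's, and Part~(1) is correct (in fact you get a bit more: the set is $K$-interpretable, not merely $L$-interpretable).

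In Part~(2) there is a genuine slip. Theorem~\ref{nice-ict} does \emph{not} produce an ict-pattern with clopen sets; it produces open sets $S_{i,j}$ together with witnesses $a_\eta$ satisfying the asymmetric condition
\[
  j=\eta(i)\implies a_\eta\in S_{i,j},\qquad j\ne\eta(i)\implies a_\eta\notin\overline{S_{i,j}}.
\]
Your clopen transfer principle (``$x\in U\iff\st^{\Mm}_L(x)\in U$ for $L$-interpretable clopen $U$'') is fine as a fact, but it is not what the appendix hands you, so the argument as written does not close. The fix is immediate and is exactly what the paper does: for an $L$-interpretable \emph{open} set $U$ one has $\st^{\Mm}_L(x)\in U\Rightarrow x\in U$, and applying this to the open complement $X\setminus\overline{S_{i,j}}$ gives $\st^{\Mm}_L(x)\notin\overline{S_{i,j}}\Rightarrow x\notin\overline{S_{i,j}}\Rightarrow x\notin S_{i,j}$. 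With this adjustment, your lifts $x_\eta$ satisfy the pattern for the same sets $S_{i,j}$, and the inequality $\dpr(\st^{\Mm}_L(Y))\le\dpr(Y)$ follows. So the error is a misquotation of the appendix result rather than a structural problem with the strategy.
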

See Remark~\ref{ict-remark} for the definition of \emph{ict pattern} and \emph{dp-rank}.
\begin{proof}
  \begin{enumerate}
  \item Fix some interpretable basis of opens for $X$.  Let $\mathcal{F}$
    be the collection of $L$-interpretable basic open sets which intersect
    $Y$.  Then $\mathcal{F}$ is interpretable in the structure $L$,
    because $\mathcal{F}$ is defined externally using $\beta$, but
    $\tp(\beta/L)$ is definable.  Now if $a \in X(L)$, the following
    are equivalent:
    \begin{enumerate}
    \item $a \in \st^{\Mm}_L(Y)$.
    \item There is $a' \in Y$ such that for every $L$-interpretable basic
      open neighborhood $U \ni a$, we have $a' \in U$.
    \item For every $L$-interpretable basic open neighborhood $U \ni a$,
      there is $a' \in Y$ such that $a' \in U$.
    \item Every $L$-interpretable basic open neighborhood of $a$ is in
      $\mathcal{F}$.
    \end{enumerate}
    Indeed, (a)$\iff$(b) by definition, (b)$\iff$(c) by saturation of
    $\Mm$, and (c)$\iff$(d) by definition of $\mathcal{F}$.  Condition
    (d) is definable because $\mathcal{F}$ is.
  \item Let $r$ be the dp-rank of the interpretable set $D :=
    \st^{\Mm}_L(Y)$.  It is finite, bounded by $\dpr(X)$.  There is an
    ict-pattern of depth $r$ in $D$.  That is, there are are uniformly
    interpretable sets $S_{i,j} \subseteq D$ for $i < r$ and $j < \omega$,
    and points $b_\eta \in D$ for $\eta \in \omega^r$, such that
    $b_\eta \in S_{i,j} \iff j = \eta(i)$.  By Theorem~\ref{nice-ict}
    in the appendix, we can also ensure that $S_{i,j}$ is open and $j
    \ne \eta(i) \implies b_\eta \notin \overline{S_{i,j}}$.  As $L$ is
    $\aleph_1$-saturated, we can arrange for all the data to be
    $L$-interpretable.  Then each $b_\eta$ is $\st^{\Mm}_L(b'_\eta)$ for
    some $b'_\eta \in Y$.  Since $S_{i,j}$ is open and $L$-interpretable,
    we have $b'_\eta \in S_{i,j}$ for $j = \eta(i)$.  Since
    $\overline{S_{i,j}}$ is closed and $L$-interpretable, we have $b'_\eta
    \notin \overline{S_{i,j}}$ for $j \ne \eta(i)$.  Then the sets
    $S_{i,j}$ and elements $b'_\eta$ are an ict-pattern of depth $r$
    in $Y$, showing $\dpr(Y) \ge r = \dpr(D)$. \qedhere
  \end{enumerate}
\end{proof}
\begin{lemma} \label{equals}
  The following subsets of $X(L)$ are equal:
  \begin{enumerate}
  \item $\stab(\mu^L \cdot q^L)$.
  \item $\bigcap_{\varphi \in \mathcal{L}} \stab_\varphi(\mu \cdot q)(L)$.
  \item $\st^{\Mm}_L(q^L(\Mm) b^{-1})$
  \item $\bigcap_{\psi \in q^L} \st^{\Mm}_L(\psi(\Mm) b^{-1})$
  \item $\bigcap_{\psi \in q} \st^{\Mm}_L(\psi(\Mm) b^{-1})$.
  \end{enumerate}
\end{lemma}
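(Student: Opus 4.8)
The plan is to prove that (1)--(5) all coincide by a short cycle of inclusions, mirroring the argument for definable groups in \cite[\S5]{johnson-yao}; passing to the interpretable setting changes nothing essential. Beyond the generalities of \cite[\S5]{johnson-yao} on $\st^{\Mm}_L$ (in particular the description, recalled there, that the realization set of $\mu^L\cdot q^L$ is $R:=\mu^L(\Mm)\cdot q^L(\Mm)$, and the compatibility of $\st^{\Mm}_L$ with the group operations), the only new inputs are Lemma~\ref{pq} and the analogue of \cite[Proposition~2.29]{johnson-yao} recorded above, from which one extracts: $\mu^L(\Mm)$ is a subgroup of $X(\Mm)$ contained in every $L$-interpretable open neighbourhood of $1$ (in particular in each $V_t$ with $t\in\Gamma_L$); and the left translate by an element of $X(L)$ of an $L$-interpretable bounded subset of $X$ is again bounded over $L$. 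Note $b\models q^L$ gives $b\in q^L(\Mm)\subseteq R$, and $R$ is $\Aut(\Mm/L)$-invariant.

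First I would establish the ``geometric'' equalities $(3)=(4)=(5)$. The inclusions $(3)\subseteq(4)\subseteq(5)$ are immediate: $\st^{\Mm}_L$ is monotone, $q^L(\Mm)\subseteq\psi(\Mm)$ for $\psi\in q^L$, and the intersection defining $(5)$ ranges over the smaller family $q\subseteq q^L$. For $(5)\subseteq(3)$, fix $g$ in $(5)$. For each $\psi\in q$ there is $c_\psi\models\psi$ with $c_\psi b^{-1}$ infinitesimally close to $g$ over $L$, equivalently (using commutativity of $X$) with $(gb)^{-1}c_\psi\in\mu^L(\Mm)$; hence the partial type $q(x)\cup\{(gb)^{-1}x\in U : U \text{ an } L\text{-interpretable open neighbourhood of }1\}$ is finitely satisfiable, so by saturation of $\Mm$ there is $c\models q$ with $(gb)^{-1}c\in\mu^L(\Mm)$. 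I claim $c$ is unbounded over $L$: otherwise $c\in V_t$ for some $t\in\Gamma_L$, whence $gb\in V_t\cdot\mu^L(\Mm)\subseteq V_t\cdot V_t$, a definably compact $L$-interpretable set, so $gb\in V_{t''}$ for some $t''\in\Gamma_L$, giving $b\in g^{-1}V_{t''}$, which is bounded over $L$ since $g\in X(L)$; this contradicts Lemma~\ref{pq}(2). Being unbounded over $L$, $c$ realizes $q^L$ by Lemma~\ref{pq}(3), and $\st^{\Mm}_L(c b^{-1})=g$ because $g^{-1}(c b^{-1})=(gb)^{-1}c\in\mu^L(\Mm)$. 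Thus $g\in\st^{\Mm}_L(q^L(\Mm)b^{-1})=(3)$, so $(3)=(4)=(5)$.

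Next I would prove $(1)=(3)$ by unwinding both sides to the condition ``$gb\in R$'', and $(1)=(2)$ by stabilizer calculus. If $g\in\stab(\mu^L\cdot q^L)$ then $gR=R$, so $gb\in R$; writing $gb=\varepsilon c$ with $\varepsilon\in\mu^L(\Mm)$, $c\models q^L$ gives $c b^{-1}=\varepsilon^{-1}g$, so $\st^{\Mm}_L(c b^{-1})=g\in(3)$. Conversely, if $g\in(3)$, pick $c\models q^L$ with $c b^{-1}$ infinitesimally close to $g$ over $L$; then $gb\in R$, and for every $c'\models q^L$ there is $\sigma\in\Aut(\Mm/L)$ with $\sigma(b)=c'$, giving $g c'=\sigma(gb)\in\sigma(R)=R$ (as $g\in X(L)$ and $R$ is $\Aut(\Mm/L)$-invariant); hence $g\cdot q^L(\Mm)\subseteq R$ and $gR=\mu^L(\Mm)\cdot\bigl(g\cdot q^L(\Mm)\bigr)\subseteq R$. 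Applying the same to $g^{-1}$, which also lies in $(3)$ by a similar automorphism argument (using that $\st^{\Mm}_L$ commutes with inversion), yields $g^{-1}R\subseteq R$, so $gR=R$ and $g\in(1)$. Finally, for a definable partial type $\Sigma$ in a definable group one has $\stab(\Sigma)=\bigcap_\varphi\stab_\varphi(\Sigma)$, so $(1)=(2)$ reduces to $\stab_\varphi(\mu^L\cdot q^L)=\stab_\varphi(\mu\cdot q)(L)$ for each $\varphi\in\mathcal L$; since $\mu$ is a definable partial type over $K$ (definability of the group topology on $X$, cf.\ Proposition~\ref{quotop}) and $q$ is definable over $K$ by Lemma~\ref{pq}, the product $\mu\cdot q$ is a definable partial type over $K$ in the sense of \cite[Definition~2.12]{PS}, so $\stab_\varphi(\mu\cdot q)$ is $K$-interpretable and its defining formula is unchanged under passing to the canonical extension $\mu^L\cdot q^L$, which gives the claimed identity.

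The step I expect to be the main obstacle is $(5)\subseteq(3)$: the realization produced by compactness only satisfies $q$, and promoting it to a realization of the heir $q^L$ via Lemma~\ref{pq}(3) is what forces the unboundedness bookkeeping above --- that $b$, and hence $gb$ (as $g\in X(L)$), is unbounded over $L$, that translates of bounded sets by $X(L)$-elements remain bounded over $L$, and that $\mu^L(\Mm)$ sits inside every $V_t$ with $t\in\Gamma_L$. The remaining inclusions are formal once one has the description $R=\mu^L(\Mm)\cdot q^L(\Mm)$ of the realization set and the compatibility of $\st^{\Mm}_L$ with the group operations from \cite[\S5]{johnson-yao}.
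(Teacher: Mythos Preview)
Your proposal is correct and takes essentially the same approach as the paper: the paper simply cites \cite[Remark~5.12, Lemma~5.13]{johnson-yao} for the equivalence of (1)--(4), and then observes that (4)$=$(5) follows as in \cite[Lemma~6.2]{johnson-yao} with Lemma~\ref{pq}(3) replacing \cite[Lemma~2.25]{johnson-yao}. Your argument unpacks exactly these citations---the stabilizer calculus for (1)$=$(2), the automorphism and $\st^{\Mm}_L$ reasoning for (1)$=$(3)$=$(4), and the compactness-plus-unboundedness promotion via Lemma~\ref{pq}(3) for $(5)\subseteq(3)$---so the substance is identical, just made explicit.
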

See \cite[Definition~5.3]{johnson-yao} for the definition of $\stab_\varphi(-)$.
\begin{proof}
  The equivalence of (1)--(4) is Remark~5.12 and Lemma~5.13 in
  \cite{johnson-yao}.  The equivalence of (4) and (5) follows by a
  similar argument to the proof of \cite[Lemma~6.2]{johnson-yao},
  using Lemma~\ref{pq}(3) instead of \cite[Lemma~2.25]{johnson-yao}.
\end{proof}

\begin{lemma} \label{key}
  If $I \subseteq X$ is $L$-interpretable and contains $b$, then
  $\st^{\Mm}_L(Ib^{-1})$ is unbounded in $X(L)$.
\end{lemma}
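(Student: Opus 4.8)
The plan is to deduce this from a single application of Lemma~\ref{old-7}, after translating by $b$. It suffices to fix an arbitrary $t_0 \in \Gamma_L$ and exhibit a point of $\st^{\Mm}_L(Ib^{-1})$ lying outside $V_{t_0}$, since the sets $V_{t_0}$ ($t_0 \in \Gamma_L$) exhaust the bounded subsets of $X(L)$.

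First I would note that $I$ is unbounded: since $b \models q^L$ and $q^L$ is unbounded over $L$ (Lemma~\ref{pq}(2)), $b$ lies in no $V_t$ with $t \in \Gamma_L$, so the $L$-interpretable set $I$, which contains $b$, is not contained in any $V_t$ either. Apply Lemma~\ref{old-7} to the unbounded interpretable set $I$ and the bounded set $A = V_{t_0}$: there is $t \in \Gamma_\Mm$ with $I \diamond (V_t \setminus V_{t_0})$ bounded. The set of all such $t$ is $L$-interpretable and non-empty, so since $L \preceq \Mm$ we may take $t \in \Gamma_L$; then $I \diamond (V_t \setminus V_{t_0})$ is an $L$-interpretable bounded set, hence is contained in $V_s$ for some $s \in \Gamma_L$.

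Next, $b \notin I \diamond (V_t \setminus V_{t_0})$: indeed $b \notin V_s$, because $s \in \Gamma_L$ and $q^L$, being unbounded over $L$, does not concentrate on $V_s$, while $I \diamond (V_t \setminus V_{t_0}) \subseteq V_s$. Since $b \in I$, the definition of $\diamond$ gives $b \cdot (V_t \setminus V_{t_0}) \cap I \neq \varnothing$, so we may pick $v \in V_t \setminus V_{t_0}$ with $bv \in I$. As $X$ is abelian, $v = (bv) b^{-1} \in I b^{-1}$. The crucial point is that $v$ belongs to the definably compact $L$-interpretable set $V_t$, so $v$ is infinitesimally close over $L$ to a point $c \in X(L)$; thus $c = \st^{\Mm}_L(v) \in \st^{\Mm}_L(I b^{-1})$, and since $V_{t_0}$ is clopen and $v \notin V_{t_0}$ we also have $c \notin V_{t_0}$. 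This is the point we sought, so $\st^{\Mm}_L(I b^{-1})$ is not contained in $V_{t_0}$; as $t_0$ was arbitrary, it is unbounded in $X(L)$.

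The argument is short, and there is no single hard step; the part to watch is the descent from $\Gamma_\Mm$ to $\Gamma_L$ for the indices $t$ and $s$, which rests only on $L \preceq \Mm$ together with the fact that $I$, the family $\{V_t\}$, and $\pi$ are all $L$-interpretable, and on the observation that the witness $v$ is automatically close to $X(L)$ precisely because it was extracted from $V_t$ with $t \in \Gamma_L$. The one external ingredient is that a definably compact $L$-interpretable subset of $X$ lies in the domain of $\st^{\Mm}_L$, which is part of the standard-part machinery imported from \cite[\S5]{johnson-yao}.
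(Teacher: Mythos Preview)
Your overall strategy matches the paper's, and the steps up through extracting $v \in V_t \setminus V_{t_0}$ with $bv \in I$ are correct (including the descent of $t$ and $s$ to $\Gamma_L$, which the paper leaves implicit).  The gap is at the next step, where you assert that $v$ has an $L$-standard part simply because $v$ lies in the definably compact $L$-interpretable set $V_t$.

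This is not part of the machinery in \cite[\S5]{johnson-yao}.  There, $\st^\Mm_L$ is explicitly a \emph{partial} map with domain $\mu^L(\Mm) \cdot X(L)$, and nothing guarantees that $V_t(\Mm)$ lies inside this domain.  The analogous statement already fails in the home sort: a $|K|^+$-saturated model $L \models \pCF$ need not be spherically complete, so there can be elements of $\Oo_\Mm$ with no $L$-standard part.  What \emph{is} available is \cite[Lemma~2.23]{johnson-yao}, which says that a \emph{definable} type concentrating on a definably compact subset of a definable manifold specializes to an $L$-point.  The paper's proof supplies precisely this missing ingredient: rather than taking an arbitrary witness, it lifts the condition ``$a \in V_{t'} \setminus V_t$ and $ba \in I$'' to the manifold $\tilde{X}$, uses definable Skolem functions to choose the lift $\alpha \in O_{t'}$ inside $\dcl(L\beta)$, and observes that $\tp(\alpha/L)$ is then a pushforward of the definable type $p^L = \tp(\beta/L)$, hence itself definable.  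Now \cite[Lemma~2.23]{johnson-yao} applies to give a specialization $\gamma \in \tilde{X}(L)$, and $c := \pi(\gamma)$ is the required standard part of $a = \pi(\alpha)$.  This detour through $\tilde{X}$ and the definability of $\tp(\beta/L)$ is the real content of the lemma; without it the existence of $\st^\Mm_L(v)$ is unjustified.
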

\begin{proof}
  If not, take $t \in \Gamma_L$ such that $\st^{\Mm}_L(Ib^{-1})
  \subseteq V_t$.  By Lemma~\ref{pq}(2), $b$ is not in any
  $L$-interpretable bounded sets.  Therefore $I$ is unbounded.  By
  Lemma~\ref{old-7}, we can find $t' \in \Gamma_L$ such that $I
  \diamond V_{t'} \setminus V_t$ is bounded.  Then $b \notin
  I \diamond V_{t'} \setminus V_t$.  This means that
  \begin{equation*}
    b \cdot (V_{t'} \setminus V_t) \cap I \ne \varnothing.
  \end{equation*}
  Therefore there is $a \in V_{t'} \setminus V_t$ such that $ba \in
  I$.  Then there is $\alpha \in O_{t'}$ with $\pi(\alpha) = a$.  The
  conditions on $\alpha$ and $a$ are definable over $\dcl(Lb)
  \subseteq \dcl(L\beta)$ (where $\beta$ is the realization of $p^L$).
  By definable Skolem functions, we can assume $\alpha \in
  \dcl(L\beta)$.  Then $\tp(\alpha/L)$ is a pushforward of
  $\tp(\beta/L)$, so $\tp(\alpha/L)$ is a 1-dimensional definable type
  on $\tilde{X}$.  This type $\tp(\alpha/L)$ concentrates on the
  definably compact set $O_{t'} \subseteq \tilde{X}$, and therefore
  $\tp(\alpha/L)$ specializes to some point $\gamma \in G(L)$ by
  \cite[Lemma~2.23]{johnson-yao}.  As the map $\pi : \tilde{X} \to X$
  is continuous, $\tp(a/L)$ specializes to $c := \pi(\gamma) \in
  X(L)$.  Thus $\st^{\Mm}_L(a)$ exists and equals $c$.  Since $V_{t'}
  \setminus V_t$ is closed, $\st^{\Mm}_L(a) \in V_{t'} \setminus V_t$.
  But $a \in b^{-1}I = Ib^{-1}$, and
  \begin{equation*}
    \st^{\Mm}_L(a) \in \st^{\Mm}_L(Ib^{-1}) \subseteq V_t,
  \end{equation*}
  a contradiction.
\end{proof}

We can now complete the proof of Theorem~\ref{int-ps}.  By
Lemma~\ref{equals},
\begin{equation*}
  \bigcap_{\varphi \in \mathcal{L}} \stab_\varphi(\mu \cdot q)(L) =
  \bigcap_{\psi \in q} \st^{\Mm}_L(\psi(\Mm) b^{-1}). \tag{$\ast$}
\end{equation*}
The groups $\stab_\varphi(\mu \cdot q)$ are $K$-interpretable because $\mu
\cdot q$ is a $K$-definable partial type.  The sets $\st^{\Mm}_L(\psi(\Mm)
b^{-1})$ are interpretable by Lemma~\ref{hard}(1).  Both
intersections involve at most $|K|$ terms, and both intersections are
filtered.

If some $\stab_\varphi(\mu \cdot q)(L)$ is bounded, then by
$|K|^+$-saturation of $L$ we have $\st^{\Mm}_L(\psi(\Mm) b^{-1})
\subseteq \stab_\varphi(\mu \cdot q)(L)$ for some $\psi(x) \in q(x)$,
contradicting Lemma~\ref{key}.  Therefore, every group
$\stab_\varphi(\mu \cdot q)(L)$ is unbounded.  Consequently, \emph{no}
$\stab_\varphi(\mu \cdot q)$ is definably compact.

Since $\tp(\beta/K)$ has dimension 1, there is some $K$-definable set
$D \ni \beta$ of dimension 1.  Then $\dpr(\pi(D)) \le \dpr(D) =
\dim(D) = 1$.  If $\psi(x)$ defines $\pi(D)$, then $\psi(x) \in q =
\tp(b/K)$, and $\st^{\Mm}_L(\psi(\Mm) b^{-1})$ has dp-rank at most
1 by Lemma~\ref{hard}(2).  By $|K|^+$-saturation,
($\ast$) gives some $\varphi$ such that $\stab_\varphi(\mu \cdot q)(L)
\subseteq \st^{\Mm}_L(\psi(\Mm) b^{-1})$.  Then $\stab_\varphi(\mu
\cdot q)$ has dp-rank at most 1.  On the other hand,
$\stab_\varphi(\mu \cdot q)$ is infinite, since it is not definably
compact.  Therefore $X' := \stab_\varphi(\mu \cdot q)$ has dp-rank at least
1.

It remains to show that the interpretable subgroup $X' \subseteq X$ has
\textit{dfg}.  The proof of \cite[Lemma~6.10]{johnson-yao} works with
minor changes.  For completeness, we give the details.  For abelian
groups of dp-rank 1, ``not \textit{fsg}'' implies \textit{dfg} as in
the proof of \cite[Lemma~2.9]{pillay-yao}.  It suffices to show that $X'$ does \emph{not}
have \textit{fsg}.  Assume for the sake of contradiction that $X'$ has
\textit{fsg}.  By \cite[Proposition~4.2]{HPP}, non-generic sets form
an ideal, and there is a small model $M_0$ such that every generic set
contains an $M_0$-point.  Take $t$ large enough that $V_t$ contains
every point in $X(M_0)$.  Then $X' \setminus V_t$ is not generic in
$X'$, so $X' \cap V_t$ \emph{is} generic, meaning that finitely many
translates of $X' \cap V_t$ cover $X'$.  But $X' \cap V_t$ and its
translates are bounded (as subsets of $X$), so then $X'$ is bounded, a
contradiction.  This completes the proof of
Theorem~\ref{int-ps}.
\begin{corollary}\label{int-main}
  Let $X$ be an abelian interpretable group.  Then there is $\alpha \le
  \omega$ and an increasing chain of \textit{dfg} subgroups $(Y_i : i < \alpha)$ with $Y_0 = 0$
  such that the quotients $Y_i/Y_{i+1}$ have dp-rank 1.  In the case
  when $\alpha < \omega$, the quotient $X/Y_{\alpha-1}$ is definably compact and
  has \textit{fsg}.
\end{corollary}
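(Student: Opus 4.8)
The plan is to construct the chain $(Y_i)$ by a straightforward recursion that applies Theorem~\ref{int-ps} repeatedly and stops exactly when a definably compact quotient is reached. I would set $Y_0 = 0$, which trivially has \textit{dfg}, and maintain as an inductive invariant that $Y_i$ is an interpretable subgroup of $X$ with \textit{dfg}, sitting atop a chain $0 = Y_0 \subseteq \cdots \subseteq Y_i$ all of whose successive quotients have dp-rank $1$. At stage $i$ I would examine the interpretable abelian group $X/Y_i$: if it is definably compact, I halt and put $\alpha = i + 1$, so that $Y_{\alpha - 1} = Y_i$.

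Otherwise, I would equip $X/Y_i$ with its canonical definable group topology, which by the Remark following Definition~\ref{mdgroup} makes it a manifold-dominated abelian group; since it is not definably compact, Theorem~\ref{int-ps} supplies an interpretable subgroup $Z \subseteq X/Y_i$ that is not definably compact, has $\dpr(Z) = 1$, and has \textit{dfg}. I would then take $Y_{i+1} \subseteq X$ to be the preimage of $Z$ under the quotient map $X \to X/Y_i$, an interpretable subgroup with $Y_i \subseteq Y_{i+1}$ and $Y_{i+1}/Y_i \cong Z$, so the new successive quotient again has dp-rank $1$. To recover \textit{dfg} for $Y_{i+1}$, I would feed the short exact sequence $1 \to Y_i \to Y_{i+1} \to Z \to 1$ into the interpretable analogue of Lemma~\ref{dfg-ext}(3) (namely Theorem~\ref{dfg-ext-2}), using that $Y_i$ has \textit{dfg} by induction and $Z$ has \textit{dfg} by Theorem~\ref{int-ps}. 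This closes the recursive step, and the chain is increasing by construction.

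Finally I would distinguish the two cases. If the recursion never halts, I set $\alpha = \omega$ and the chain $(Y_i : i < \omega)$ already satisfies everything claimed, with the last sentence of the statement vacuous. If it halts at a finite stage, then $\alpha < \omega$ and $X/Y_{\alpha - 1}$ is definably compact by the halting condition; to see it also has \textit{fsg}, I would invoke the equivalence of \textit{fsg} with definable compactness for abelian groups over $\pCF$ (the interpretable counterpart of \cite[Theorem~1.2]{johnson-fsg}).

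I expect none of these steps to be the real difficulty: each is an immediate consequence of Theorem~\ref{int-ps} together with the extension lemma. The genuine obstacle is that the recursion cannot be shown to terminate. Unlike the definable case, where $\dim(Y_i)$ strictly increases and is bounded by $\dim(X)$, for interpretable groups we have only dp-rank, which is not known to be additive across extensions in this setting, so there is no evident reason $\dpr(X/Y_i)$ must strictly decrease. This is exactly the gap flagged at the start of this section, and bridging it would turn the corollary into a full interpretable analogue of Theorem~\ref{decomp} (Conjecture~\ref{conj}).
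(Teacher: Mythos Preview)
Your proposal is correct and follows essentially the same approach as the paper's proof: recursively apply Theorem~\ref{int-ps} to the quotients $X/Y_i$ (using that every interpretable group is manifold-dominated), build the chain via preimages, and obtain \textit{dfg} for each $Y_i$ from Theorem~\ref{dfg-ext-2}. The only minor difference is that for the implication ``definably compact $\Rightarrow$ \textit{fsg}'' in the interpretable setting, the paper cites \cite[Theorem~7.1]{admissible} directly rather than appealing to an interpretable counterpart of \cite[Theorem~1.2]{johnson-fsg}.
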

\begin{proof}
  Any interpretable group is manifold-dominated
  \cite[Theorem~5.10]{admissible}, so we can apply
  Theorem~\ref{int-ps} to any interpretable group.  The
  first application gives $Y_1$; applying the theorem to $X/Y_1$ gives
  $Y_2$, and so on.  The process terminates if any quotient $X/Y_i$ is
  definably compact.  Definably compact groups have \textit{fsg}
  \cite[Theorem~7.1]{admissible}.  To prove that the groups $Y_i$ have
  \textit{dfg}, we can no longer use Lemma~\ref{dfg-ext}, as
  $\pCF^\eq$ lacks definable Skolem functions.  But
  Theorem~\ref{dfg-ext-2} in the appendix works.
\end{proof}
\begin{remark}
  If we start with a quotient group $G/H$, we can replace the use of
  \cite[Theorem~5.10]{admissible} with
  Proposition~\ref{quotop} above.
\end{remark}
\begin{remark}
  If $X$ is \emph{definable}, then the quotients $Y_i/Y_j$ are
  definable by induction on $i-j$, using Corollary~\ref{quots}.
  Then $\dim(Y_{i+1}/Y_i) = \dpr(Y_{i+1}/Y_i) = 1$, which implies
  $\dim(Y_{i+1}) > \dim(Y_i)$.  Therefore, the sequence \emph{must}
  terminate, as we saw in the proof of Theorem~\ref{decomp}.
  In the general interpretable case, it's unclear whether this works,
  so we make a conjecture:
\end{remark}
\begin{conjecture} \label{conj}
  In Corollary~\ref{int-main}, $\alpha$ is finite.
  Therefore, any abelian interpretable group $X$ sits in a short exact
  sequence $1 \to Y_{\alpha-1} \to X \to X/Y_{\alpha-1} \to 1$ where $Y_{\alpha-1}$
  has \textit{dfg} and $X/Y_{\alpha-1}$ has \textit{fsg} and is definably
  compact.
\end{conjecture}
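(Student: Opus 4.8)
The plan is to produce a natural-number-valued rank $\rho$ on interpretable abelian groups that is finite on any fixed $X$ and strictly drops along the recursion of Corollary~\ref{int-main}. Concretely, it suffices to show that
\begin{equation*}
  \rho(X/Y_{i+1}) < \rho(X/Y_i) \quad\text{whenever } X/Y_i \text{ is not definably compact,}
\end{equation*}
since then the recursion cannot run for $\omega$ steps: after at most $\rho(X)$ steps some quotient $X/Y_i$ must be definably compact, at which point the process stops with $\alpha = i+1 < \omega$. Once $\alpha$ is finite, $X/Y_{\alpha-1}$ is definably compact, hence has \textit{fsg} by \cite[Theorem~7.1]{admissible} (exactly as in the proof of Corollary~\ref{int-main}), and the short exact sequence $1 \to Y_{\alpha-1} \to X \to X/Y_{\alpha-1} \to 1$ in the statement of Conjecture~\ref{conj} is then immediate.

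The obvious candidate for $\rho$ is dp-rank. First, $\dpr(X) < \omega$: fixing a manifold-domination $\pi : \tilde X \to X$ we get $\dpr(X) \le \dpr(\tilde X) = \dim \tilde X < \omega$, exactly as in the proof of Lemma~\ref{old-7}. Second, dp-rank never increases under interpretable surjections, so $\dpr(X/Y_{i+1}) \le \dpr(X/Y_i)$ for free; the entire difficulty is the \emph{strictness}. Writing the relevant extension as
\begin{equation*}
  1 \to Y_{i+1}/Y_i \to X/Y_i \to X/Y_{i+1} \to 1,
\end{equation*}
and recalling that $Y_{i+1}/Y_i$ is \textit{dfg} of dp-rank $1$ by Theorem~\ref{int-ps}, the needed strictness is precisely an instance of the following super-additivity statement: for a short exact sequence $1 \to A \to B \to C \to 1$ of interpretable abelian groups with $A$ \textit{dfg}, one has $\dpr(B) \ge \dpr(A) + \dpr(C)$. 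The reverse inequality is the standard subadditivity of dp-rank in NIP, so the content is purely the $\ge$ direction; and in fact one needs it only for $\dpr(A) = 1$ with $A$ \textit{dfg}, where it reduces to $\dpr(B) \neq \dpr(C)$, i.e.\ that quotienting by an infinite \textit{dfg} subgroup of dp-rank $1$ genuinely costs rank.

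I see two natural routes to this super-additivity. The first is to carry the manifold-domination through the quotient: one would like to show that if $X$ is manifold-dominated by $\pi : \tilde X \to X$ and $Y_1 \subseteq X$ is a \textit{dfg} interpretable subgroup, then $X/Y_1$ is again manifold-dominated, by a definable manifold of dimension $\dim \tilde X - \dpr(Y_1)$. This is the interpretable analogue, for the whole domination datum, of Corollary~\ref{quots} and Proposition~\ref{quotient} (and presumably fits the framework of \cite{andujar-johnson}); with it, $\rho(X) := \min\{\dim \tilde X : \tilde X \text{ manifold-dominates } X\}$ would be a working rank. The second route is structural: prove an analogue of Fact~\ref{mos-fact} for interpretable \textit{dfg} abelian groups over $\Qp$ (or over $\pCF$) — say a finite-to-one interpretable homomorphism from $X'$, or from $(X')^{00}$, to an algebraic group — and transfer additivity of algebraic dimension to $\dpr$, mirroring how for \emph{definable} groups Theorem~\ref{decomp} succeeds precisely because $\dim B = \dim A + \dim(B/A)$.

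The main obstacle is exactly this super-additivity of dp-rank — equivalently, additivity of a dimension function — for short exact sequences of \emph{interpretable} abelian groups in $\pCF^\eq$. In the definable category it is the elementary fact $\dim(H') = \dim(H) + 1 > \dim(H)$ used in Theorem~\ref{decomp}; but $\pCF^\eq$ has no definable Skolem functions and no fully developed dimension theory on imaginaries, and the usual ict-pattern pullback argument yields only the easy inequality $\dpr(B) \le \dpr(A) + \dpr(C)$. I do not expect the hard direction to be routine: super-additivity of dp-rank for abelian groups is not known in general, and establishing it in the \textit{dfg} case over $\pCF^\eq$ is essentially equivalent to Conjecture~\ref{conj} itself.
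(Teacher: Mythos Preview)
Your proposal is not a proof, and you correctly recognize this yourself in the final paragraph. The statement in question is a \emph{conjecture} in the paper; the authors do not prove it. In the remark immediately preceding it they note that the argument of Theorem~\ref{decomp} works in the definable case precisely because $\dim(Y_{i+1}) > \dim(Y_i)$, but ``in the general interpretable case, it's unclear whether this works, so we make a conjecture.'' There is therefore no proof in the paper to compare against.

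That said, your analysis of the obstacle is accurate and matches the paper's own diagnosis. The natural rank to use is dp-rank (or some dimension on $\pCF^\eq$), subadditivity is easy, and the missing ingredient is exactly the strict drop $\dpr(X/Y_{i+1}) < \dpr(X/Y_i)$, i.e.\ super-additivity of dp-rank across the short exact sequence $1 \to Y_{i+1}/Y_i \to X/Y_i \to X/Y_{i+1} \to 1$ with dp-rank~$1$ \textit{dfg} kernel. Your two suggested routes (tracking a manifold-domination of controlled dimension through the quotient, or transferring additivity from algebraic dimension via a structure theorem for interpretable \textit{dfg} groups) are reasonable strategies, but neither is currently known, and as you say, establishing either would essentially resolve the conjecture. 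So the gap is real and named correctly; there is simply nothing further the paper supplies that would let you close it.
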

Pillay and Yao asked whether any definably amenable group $G$ in a distal theory sits in a short exact sequence $1 \to H \to G \to C \to 1$ with $C$ having \textit{fsg} and $H$ having \textit{dfg} \cite[Question~1.19]{pillay-yao0}.  If Conjecture~\ref{conj} is true, it would provide further evidence for this.

\appendix

\section{Nice ict patterns} \label{ict}
\begin{remark} \label{ict-remark}
Following \cite[Definition~4.21]{NIPguide}, an \emph{ict-pattern} of depth $\kappa$ in a partial type $\Sigma(x)$ is a sequence of formulas $\varphi_i(x;y_i)$ and an array $(b_{i,j} : i <\kappa, ~ j < \omega)$ with $|b_{i,j}| = |y_i|$ such that for any function $\eta : \kappa \to \omega$, the following partial type is consistent:
\[ \Sigma(x) \cup \{\varphi_{i,\eta(i)}(x,b_{i,\eta(i)}) : i < \kappa\} \cup \{\neg \varphi_{i,j}(x,b_{i,j}):i < \kappa, ~ j \ne \eta(i)\}\]
Abusing notation, we say that $(\varphi_i(x;b_{i,j}) : i < \kappa, ~ j < \omega)$ is an ict-pattern to mean that the pair $((\varphi_i : i < \kappa),(b_{i,j} : i <\kappa, ~ j <\omega))$ is an ict-pattern.  Sometimes we consider ict-patterns where the columns are indexed by an infinite linear order $I$ other than $\omega$.  The definition is analogous, and ict-patterns of this sort can be converted to ict-patterns indexed by $\omega$ via a compactness argument.
% The notion of ict-pattern is implicit in \cite{shelah}, and explicit in \cite[Section~3]{Adler} (but without the partial type $\Sigma(x)$).  The ``randomness patterns'' of \cite[Definition~8.1]{usvyatsov} are the same thing as mutually indiscernible ict patterns.

Finally, the \emph{dp-rank} of $\Sigma(x)$ is the supremum of cardinals $\kappa$ such that there is an ict-pattern of depth $\kappa$ in $\Sigma(x)$, possibly in an elementary extension.
\end{remark}

Work in $\Mm^\eq$ for some monster model $\Mm \models \pCF$.  There is
a well-behaved notion of dimension on $\Mm^\eq$ \cite{gagelman}, which
gives rise to a notion of independence:
\begin{equation*}
  a \dimind_C b \iff \dim(a/Cb) = \dim(a/C) \iff \dim(b/Ca) =
  \dim(b/C).
\end{equation*}
This notion satisfies many of the usual properties
\cite[\S2.1]{admissible}.\footnote{The one unusual property is that ``$\dim(a/C) = 0$'' is strictly weaker than ``$a \in \acl(C)$''.} Say that a sequence $\{a_i : i \in I\}$ is
\emph{dimensionally independent} over a set $B$ if $a_i \dimind_B
a_{<i}$ for $i \in I$, where $a_{<i} = \{a_j : j < i\}$.  As usual,
this is independent of the order on $I$.
\begin{lemma}
  If $\tp(a/Cb)$ is finitely satisfiable in $C$, then $a \dimind_C
  b$.
\end{lemma}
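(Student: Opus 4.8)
The plan is to deduce this from the characterization of $\dimind$ as a symmetric relation together with the basic monotonicity of $\dim$ under the passage from finite satisfiability to a specialization. The key fact I want to use is that finite satisfiability gives a kind of "non-forking" behavior for the dimension: if $\tp(a/Cb)$ is finitely satisfiable in $C$, then no $Cb$-definable set of dimension strictly less than $\dim(a/C)$ can contain $a$ — equivalently, $\dim(a/Cb) = \dim(a/C)$.

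Concretely, I would argue as follows. Suppose for contradiction that $a \centernot{\dimind}_C b$, i.e.\ $\dim(a/Cb) < \dim(a/C)$. Let $n = \dim(a/Cb)$. By definition of dimension in $\Mm^\eq$, there is a $Cb$-definable (in $\Mm^\eq$) set $D$ with $a \in D$ and $\dim(D) = n$. Pick a formula $\varphi(x;b)$ over $Cb$ defining $D$ (absorbing the $C$-parameters). Since $\tp(a/Cb)$ is finitely satisfiable in $C$, the formula $\varphi(x;b)$ is satisfied by some $a' \in C$; more is true — every formula in $\tp(a/Cb)$ is realized in $C$, so in particular any finite conjunction carving out a subset of $D$ of dimension $n$ witnessing $a \in D$ is realized in $C$. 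Now I want to promote this: because $C$ is (a subset of) the monster, and $\dim$ is definable in families, the set of parameters $b'$ such that $\dim(\varphi(\Mm; b')) = n$ is type-definable over $C$ (indeed definable, by definability of dimension), and $b$ satisfies the corresponding condition; so we may take $b' \in C$ realizing enough of $\tp(b/C)$ that $\dim(\varphi(\Mm;b')) = n$ as well. Then every realization of $\varphi(x;b')$ that we extract lies in a $C$-definable set of dimension $n$, forcing $\dim(a'/C) \le n < \dim(a/C)$ for the $C$-point $a' \in \varphi(\Mm;b')$ — but this does not yet contradict anything, since $a'$ need not have the same type as $a$ over $C$.

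The right way to close the gap, and the step I expect to be the main obstacle, is to instead use the symmetry of $\dimind$ directly: we always have $\dim(b/Ca) \le \dim(b/C)$, and the content to extract is that finite satisfiability prevents the strict inequality. Here is the cleaner route. Since $\tp(a/Cb)$ is finitely satisfiable in $C$, it extends to a global type $\mathbf{p} \in S(\Mm)$ finitely satisfiable in $C$, hence $C$-invariant; in particular $\tp(a/Cb)$ does not fork over $C$ in the sense of the dimension-independence relation, because any $Cb$-definable superset of the locus of $a$ of dimension $< \dim(a/C)$ would, by finite satisfiability, have a $C$-point, and iterating along a Morley sequence of $\mathbf{p}$ over $Cb$ would produce an infinite $C$-indiscernible sequence all of whose members lie in that low-dimensional $Cb$-definable set — contradicting that the locus of $a$ over $C$ has dimension $\dim(a/C)$ once one notes that a Morley sequence of $\mathbf{p}$ over $C$ is dimensionally independent over $C$, so its $k$-th term cannot be pinned into a set of dimension $< \dim(a/C)$ even after adding the earlier terms. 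This last point — that a $C$-finitely-satisfiable Morley sequence is dimensionally generic, i.e.\ $a_i \dimind_C a_{<i}$ — is exactly the inductive heart of the argument, and if it is available from \cite[\S2.1]{admissible} or provable by a short induction using that $\dim(a_i / C a_{<i})$ is computed by a $C a_{<i}$-definable set which, by finite satisfiability, reflects down to $C$, then the lemma follows by taking $i = 1$ and $b = a_0$. I would isolate that Morley-sequence statement as the one genuine thing to check; everything else is bookkeeping with definability of dimension.
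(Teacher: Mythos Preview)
Your proposal does not reach a proof. The first attempt (bounding $\dim(a/Cb)$ directly) stalls, as you yourself observe: finite satisfiability of $\tp(a/Cb)$ in $C$ lets you move $a$ into $C$, but the low-dimensional set $D$ is $Cb$-definable, so moving $a$ gains nothing. You then correctly flag symmetry as the missing ingredient---but you never actually use it. Instead you pivot to a Morley-sequence argument that is circular: the claim ``a $C$-finitely-satisfiable Morley sequence satisfies $a_i \dimind_C a_{<i}$'' is just the lemma applied with $b = a_{<i}$, and even granting it, ``taking $i=1$ and $b=a_0$'' does not recover the general statement, since the $b$ in the lemma is arbitrary, not an earlier term of a Morley sequence in $\mathbf{p}$.

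The paper's proof executes precisely the symmetry idea you named and then dropped. Assume $n := \dim(b/Ca) < \dim(b/C)$. Take a $Ca$-interpretable set $X \ni b$ with $\dim(X) = n$, say $X = \varphi(a,\Mm)$ with $\varphi$ an $\mathcal{L}^\eq_C$-formula. By definability of dimension there is a $C$-formula $\psi(x)$ defining $\{a' : \dim(\varphi(a',\Mm)) = n\}$. Now $\varphi(x,b) \wedge \psi(x)$ is a formula \emph{over $Cb$} satisfied by $a$, so finite satisfiability yields $a' \in C$ with $\varphi(a',b) \wedge \psi(a')$. Then $b$ lies in the $C$-interpretable set $\varphi(a',\Mm)$, which has dimension $n$, forcing $\dim(b/C) \le n$---a contradiction. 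The whole point is that by switching to $\dim(b/Ca)$, the parameter you need to replace is $a$, and that is exactly the parameter finite satisfiability lets you push into $C$.
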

\begin{proof}
  Suppose not.  Let $n = \dim(b/Ca) < \dim(b/C)$.  By
  \cite[Proposition~3.7]{gagelman}, there is a $Ca$-interpretable set $X$
  containing $b$ with $\dim(X) = n$.  Write $X$ as $\varphi(a,\Mm)$
  for some $\mathcal{L}^\eq_C$-formula $\varphi(x,y)$.  By
  \cite[Proposition~2.12]{admissible}, the set $\{a' \in \Mm :
  \dim(\varphi(a',\Mm)) = n\}$ is definable, defined by some
  $\mathcal{L}^\eq_C$-formula $\psi(x)$.  Then $\Mm \models \varphi(a,b) \wedge
  \psi(a)$.  As $\tp(a/Cb)$ is finitely satisfiable in $C$, there is
  some $a' \in C$ such that $\Mm \models \varphi(a',b) \wedge
  \psi(a')$.  Then $b$ is in the $C$-interpretable set $\varphi(a',\Mm)$
  which has dimension $n$ as $\Mm \models \psi(a')$.  Therefore
  $\dim(b/C) \le n$, a contradiction.
\end{proof}
\begin{corollary} \label{naming}
  Suppose $\ldots, b_{-1}, b_0, b_1, \ldots, \ldots, c_{-1}, c_0, c_1, \ldots$ is
  $C_0$-indiscernible.  Then the sequence $\ldots, b_{-1}, b_0, b_1, \ldots$ is dimensionally
  independent over $C = C_0 \cup \{c_i : i \in \Zz\}$.
\end{corollary}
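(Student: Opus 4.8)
The plan is to reduce everything to the previous lemma (finite satisfiability implies dimensional independence). Concretely, write $C := C_0 \cup \{c_i : i \in \Zz\}$. I would show that for every $i \in \Zz$ the type $\tp(b_i / C \cup \{b_k : k < i\})$ is finitely satisfiable in $C$. Granting this, the previous lemma gives $b_i \dimind_C e$ for every finite subtuple $e$ of $\{b_k : k < i\}$, and then finite character of $\dimind$ yields $b_i \dimind_C \{b_k : k < i\}$ for all $i$, which is exactly the assertion that $b_{-1}, b_0, b_1, \ldots$ is dimensionally independent over $C$.

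For the finite satisfiability claim, fix $i$ and a formula $\varphi(x,e)$ with $\models \varphi(b_i, e)$, where $e$ is a finite tuple from $C \cup \{b_k : k < i\}$. After sorting, write $e = (\bar d, b_{k_1}, \ldots, b_{k_m}, c_{j_1}, \ldots, c_{j_n})$ with $\bar d$ from $C_0$, $k_1 < \cdots < k_m < i$, and $j_1 < \cdots < j_n$. I would then pick $j_0 \in \Zz$ with $j_0 < j_1$ (any $j_0$ if $n=0$) and compare the two increasing subsequences $(b_{k_1}, \ldots, b_{k_m}, b_i, c_{j_1}, \ldots, c_{j_n})$ and $(b_{k_1}, \ldots, b_{k_m}, c_{j_0}, c_{j_1}, \ldots, c_{j_n})$ of the given $C_0$-indiscernible sequence: in both, the distinguished middle entry lies strictly above each of $b_{k_1}, \ldots, b_{k_m}$ (since the whole $c$-block sits above the whole $b$-block) and strictly below $c_{j_1}$, so the two tuples realize the same order pattern. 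By $C_0$-indiscernibility and $\bar d \subseteq C_0$, we get $\models \varphi(c_{j_0}, e)$, and $c_{j_0} \in C$. Since the type is closed under conjunction, finite satisfiability in $C$ follows.

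The step I expect to be the only real obstacle is recognizing \emph{where} to move $b_i$: the naive idea of keeping the witness inside the $b$-block fails, because $\{k \in \Zz : k < i\}$ has a maximum $i-1$, so $b_i$ cannot be slid to an earlier $b$ while staying above $b_{i-1}$; symmetrically, reversing the order on the $b$'s is no help, since the $c$'s all lie above the $b$'s. So one is forced to exploit that the $c$-block is indexed by an order with no least element and lies above every $b_k$, and to send $b_i$ into a sufficiently early $c$. Everything else is routine bookkeeping with finite character of $\dimind$ and the definition of dimensional independence; one may also note in passing that $c_{j_0}$ is distinct from the listed parameters (as $j_0 < j_1 \le j_\ell$), though this is not actually needed for finite satisfiability.
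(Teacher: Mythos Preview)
Your proof is correct and follows essentially the same route as the paper: show that $\tp(b_i / C \cup \{b_k : k < i\})$ is finitely satisfiable in $C$ by sliding $b_i$ to a sufficiently early $c_{j_0}$ via indiscernibility, then apply the preceding lemma and finite character of $\dimind$. The paper's proof is terser (``any formula in $p$ is satisfied by $c_i$ for $i \ll 0$'') but uses the identical witnessing idea.

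One small remark: your third paragraph's diagnosis of why the $b$-block cannot supply a witness is slightly off. The real obstruction is not that $\{k : k < i\}$ has a maximum---it is simply that no $b_\ell$ lies in $C$ at all, so finite satisfiability \emph{in $C$} forces the witness into the $c$-block from the outset. This does not affect the proof itself, only the surrounding commentary.
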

\begin{proof}
  For example, $p = \tp(b_n/Cb_1 b_2 \cdots b_{n-1})$ is finitely satisfiable in $C$; any formula in $p$ is satisfied by $c_i$ for $i \ll 0$.  This argument shows that any finite subsequence of $\{b_i\}_{i \in \Zz}$ is dimensionally independent over $C$.  This implies the full sequence is dimensionally independent, by finite character of $\dimind$.
\end{proof}
\begin{lemma} \label{discard}
  If $\{b_i : i \in I\}$ is dimensionally independent over $C$, and
  $\dim(a/C) = n$, then $a \dimind_C b_i$ for all but at most $n$
  values of $i$.
\end{lemma}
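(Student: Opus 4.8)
The plan is to argue by contradiction, exploiting the symmetry of $\dimind$ (built into the definition above) together with the additivity and monotonicity of $\dim$ recorded in \cite[\S2.1]{admissible}. Suppose the ``bad set'' $\{i \in I : a \not\dimind_C b_i\}$ had at least $n+1$ elements, and pick distinct indices $i_0 < i_1 < \cdots < i_n$ inside it. Since $\dimind$-independence of a sequence does not depend on its order, $b_{i_0}, \ldots, b_{i_n}$ is still dimensionally independent over $C$, and applying the chain rule $\dim(xy/C) = \dim(x/Cy) + \dim(y/C)$ repeatedly gives
\begin{equation*}
  \dim(b_{i_0} \cdots b_{i_n}/C) = \sum_{j=0}^n \dim(b_{i_j}/C).
\end{equation*}

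Next I would compute $\dim(a\, b_{i_0} \cdots b_{i_n}/C)$ in two ways. Peeling off the $b$'s first and using dimensional independence of the $b$'s together with $\dim(a / C b_{i_0} \cdots b_{i_n}) \ge 0$ gives
\begin{equation*}
  \dim(a\, b_{i_0} \cdots b_{i_n}/C) \ge \sum_{j=0}^n \dim(b_{i_j}/C).
\end{equation*}
Peeling off $a$ first gives $\dim(a\, b_{i_0} \cdots b_{i_n}/C) = \dim(a/C) + \sum_{j=0}^n \dim(b_{i_j}/ C a\, b_{i_0} \cdots b_{i_{j-1}})$. For each $j$, monotonicity gives $\dim(b_{i_j}/Ca\, b_{i_0} \cdots b_{i_{j-1}}) \le \dim(b_{i_j}/Ca)$; and $a \not\dimind_C b_{i_j}$ gives, by symmetry of $\dimind$ together with monotonicity, $\dim(b_{i_j}/Ca) < \dim(b_{i_j}/C)$, which forces $\dim(b_{i_j}/Ca) \le \dim(b_{i_j}/C) - 1$ since $\dim$ is $\Nn$-valued. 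Summing and using $\dim(a/C) = n$,
\begin{equation*}
  \dim(a\, b_{i_0} \cdots b_{i_n}/C) \le n + \sum_{j=0}^n \bigl(\dim(b_{i_j}/C) - 1\bigr) = \sum_{j=0}^n \dim(b_{i_j}/C) - 1.
\end{equation*}

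Comparing the two estimates yields $\sum_j \dim(b_{i_j}/C) \le \sum_j \dim(b_{i_j}/C) - 1$, a contradiction; hence the bad set has size at most $n$. I do not expect a serious obstacle: the whole argument is a routine bookkeeping with the chain rule, monotonicity, and symmetry of Gagelman dimension in $\Mm^\eq$. The only point needing a moment's care is the passage from the strict inequality $\dim(b_{i_j}/Ca) < \dim(b_{i_j}/C)$ to a drop of at least $1$, which is legitimate precisely because the dimension takes values in $\Nn$.
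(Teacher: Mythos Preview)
Your argument is correct. The only point worth noting is that passing to a finite subsequence preserves dimensional independence (since $b_{i_j} \dimind_C b_{<i_j}$ and $\{b_{i_0},\ldots,b_{i_{j-1}}\} \subseteq b_{<i_j}$, monotonicity of $\dimind$ gives $b_{i_j} \dimind_C b_{i_0}\cdots b_{i_{j-1}}$); you use this implicitly and it is unproblematic.

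The paper's proof is organized differently. After extracting $n+1$ bad indices, it looks at the non-increasing sequence $\dim(a/C b_1 \cdots b_i)$ for $0 \le i \le n+1$; since it starts at $n$ and lies in $\Nn$, it cannot strictly decrease $n+1$ times, so for some $i$ one has $a \dimind_{Cb_1\cdots b_i} b_{i+1}$, and then left transitivity (using $b_1\cdots b_i \dimind_C b_{i+1}$) yields $a \dimind_C b_{i+1}$, contradicting badness of $i+1$. Your approach instead computes $\dim(a\,b_{i_0}\cdots b_{i_n}/C)$ in two ways and compares. Both are standard bookkeeping with the same underlying axioms (additivity, monotonicity, symmetry); the paper's route isolates a single offending index via pigeonhole and an explicit appeal to left transitivity, whereas yours hides the transitivity inside the global additivity count. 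Neither approach gains real generality over the other here.
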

The proof is standard, but we include it for completeness.
\begin{proof}
  Otherwise, passing to a subsequence, we could arrange for
  $b_1,\ldots,b_{n+1}$ to be dimensionally independent over $C$, but
  $a {\centernot\ind}^{\dim}_C b_i$ for each $i$.  The sequence
  $(\dim(a/Cb_1,\ldots,b_i) : 0 \le i \le n+1)$ cannot decrease $n+1$
  times, so there is some $0 \le i \le n$ such that
  $\dim(a/Cb_1,\ldots,b_i) = \dim(a/Cb_1,\ldots,b_{i+1})$, i.e.,
  \begin{equation*}
    a \ind^{\dim}_{Cb_1,\ldots,b_i} b_{i+1}.
  \end{equation*}
  As $b_1,\ldots,b_i \ind^{\dim}_C b_{i+1}$, left transitivity gives
  $a \ind^{\dim}_C b_{i+1}$, a contradiction.
\end{proof}
\begin{lemma} \label{indep-ict}
  Let $X$ be a $C$-interpretable set of parameters, with dp-rank $r$.
  Then there is $C' \supseteq C$ and an ict pattern of depth $r$ in
  $X$ of the form $(\varphi_i(x;b_{i,j}) : i < r, ~ j \in \Zz)$, such
  that the array $(b_{i,j} : i < r, ~ j \in \Zz)$ is mutually
  $C'$-indiscernible, and for each $i$, the sequence $(b_{i,j} : j \in
  \Zz)$ is dimensionally independent over $C'$.
\end{lemma}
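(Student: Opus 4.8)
The plan is to start from an arbitrary ict pattern of depth $r$ in $X$ witnessing $\dpr(X)=r$ and improve it in two stages: first make the rows mutually indiscernible, then split each row into a ``front half'' that we keep as the pattern and a ``back half'' that we fold into the new parameter set $C'$. The dimensional independence will come out of Corollary~\ref{naming}, applied one row at a time after the split.

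First enlarge $C$ to a small set $C_1 \supseteq C$ containing the finitely many parameters of the formulas of some fixed ict pattern of depth $r$ in $X$; the $C'$ we produce will satisfy $C' \supseteq C_1 \supseteq C$. By definition of dp-rank there is an ict pattern $(\varphi_i(x;b_{i,j}) : i < r, ~ j < \omega)$ of depth $r$ in $X$ with each $\varphi_i$ over $C_1$. By the standard extraction of mutually indiscernible arrays (see \cite{NIPguide}), applied with column index set $\Zz + \Zz$, we may replace this by a mutually $C_1$-indiscernible array $(b_{i,j} : i < r, ~ j \in \Zz + \Zz)$ which is again an ict pattern of depth $r$ in $X$ with the same formulas $\varphi_i$. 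Indeed, being an ict pattern means that for each $\eta$ a certain infinite partial type is consistent; by compactness this reduces to finite fragments involving only finitely many columns, and the extraction can be arranged so that every finite configuration of columns of the new array realizes over $C_1$ the same type as some configuration of columns of the original array occupying the same rows in the same relative order, where the original pattern (which works for \emph{all} choices of distinguished columns) supplies the needed consistency. Write $J_1, J_2$ for the first and second copies of $\Zz$ inside $\Zz + \Zz$.

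Now put $C' = C_1 \cup \{ b_{i,j} : i < r, ~ j \in J_2 \}$ and pass to the sub-array $(b_{i,j} : i < r, ~ j \in J_1)$. Restricting to the infinite column set $J_1$ only shrinks the partial types occurring in the ict-pattern condition, so this is still an ict pattern of depth $r$ in $X$; after relabelling $J_1$ as $\Zz$ it remains to verify mutual $C'$-indiscernibility and dimensional independence of each row over $C'$. For a fixed $i_0 < r$ one computes
\[
  C' \cup \{ b_{i,j} : i \ne i_0, ~ j \in J_1 \}
  = B \cup \{ b_{i_0,j} : j \in J_2 \}, \qquad
  B := C_1 \cup \{ b_{i,j} : i \ne i_0, ~ j \in \Zz + \Zz \}.
\]
By mutual $C_1$-indiscernibility the full row $(b_{i_0,j} : j \in \Zz + \Zz)$ is $B$-indiscernible, and since $J_1$ is an initial segment of $\Zz + \Zz$ with complementary final segment $J_2$, the restriction $(b_{i_0,j} : j \in J_1)$ is indiscernible over $B \cup \{ b_{i_0,j} : j \in J_2 \}$ --- any two increasing $J_1$-subtuples, even after appending a fixed increasing $J_2$-subtuple, remain increasing subtuples of the $B$-indiscernible sequence, hence have the same type over $B$. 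This is exactly indiscernibility over the displayed base, so the sub-array is mutually $C'$-indiscernible.

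For dimensional independence, fix $i_0 < r$ and apply Corollary~\ref{naming} to the single row $(b_{i_0,j} : j \in \Zz + \Zz)$, viewed as $(b_{i_0,j} : j \in J_1)$ followed by $(b_{i_0,j} : j \in J_2)$: this row is indiscernible over $C_0 := C_1 \cup \{ b_{i,j} : i \ne i_0, ~ j \in J_2 \}$ (a subset of $B$, so indiscernibility persists), hence $(b_{i_0,j} : j \in J_1)$ is dimensionally independent over $C_0 \cup \{ b_{i_0,j} : j \in J_2 \} = C'$. As $i_0$ is arbitrary, every row of the sub-array is dimensionally independent over $C'$, which finishes the plan. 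The crux is the ``one-sided naming'' in this last step: it works precisely because Corollary~\ref{naming} gives dimensional independence over the base \emph{together with} the named tail, and because an initial segment of an indiscernible sequence stays indiscernible over its own tail. The only points needing care --- all routine --- are the bookkeeping showing that the single set $C'$ (forced to contain the $J_2$-columns of \emph{every} row) simultaneously yields mutual indiscernibility and row-wise dimensional independence, and the check that the ict-pattern property survives both the extraction and the restriction to $J_1$.
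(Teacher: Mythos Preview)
Your proof is correct and follows essentially the same approach as the paper: extract a mutually indiscernible array indexed by two end-to-end copies of $\Zz$, fold the second copy into the base $C'$, and invoke Corollary~\ref{naming} row by row. The paper's version is terser and omits the explicit verifications of mutual $C'$-indiscernibility and the dimensional-independence check that you spell out; your preliminary enlargement of $C$ to $C_1$ is harmless but unnecessary, since in an ict pattern the formulas $\varphi_i(x;y_i)$ may be taken parameter-free.
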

\begin{proof}
  Let $\Zz + \Zz'$ denote two copies of $\Zz$ laid end to end, with
  the second copy denoted $\Zz'$.  Take an ict pattern
  $(\varphi_i(x;b^0_{i,j}) : i < r, ~ j < \omega)$ in $X$.  Let
  $(b_{i,j} : i < r, ~ j \in \Zz + \Zz')$ be a mutually
  $C$-indiscernible array extracted from $(b^0_{i,j} : i < r, ~ j <
  \omega)$.  Then $(\varphi_i(x;b_{i,j}) : i < r, ~ j \in \Zz + \Zz')$
  is an ict pattern in $X$.  Let $C' = C \cup \{b_{i,j}, i < r, ~ j
  \in \Zz'\}$.  Then $(b_{i,j} : i < r, ~ j \in \Zz)$ is mutually
  $C'$-indiscernible, and each row is dimensionally independent over
  $C'$ by Corollary~\ref{naming}.
\end{proof}

\begin{theorem} \label{nice-ict}
  Let $G$ be a manifold-dominated interpretable group of dp-rank $r$.
  There is an ict-pattern $(\varphi_i(x;b_{i,j}) : i < r, ~ j <
  \omega)$ in $G$ such that if $S_{i,j} = \varphi_i(\Mm;b_{i,j})$,
  then the following properties hold:
  \begin{enumerate}
  \item Each set $S_{i,j}$ is open.
  \item For each function $\eta : r \to \omega$, there is an element
    $a_\eta \in G$ such that
    \begin{gather*}
      j = \eta(i) \implies a_\eta \in S_{i,j} \\
      j \ne \eta(i) \implies a_\eta \notin \overline{S_{i,j}}
    \end{gather*}
  \end{enumerate}
\end{theorem}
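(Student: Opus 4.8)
The plan is to take an arbitrary ict-pattern of depth $r$ in $G$ (one exists since $\dpr(G) \ge r$) and improve it in stages: normalize it using dimensional independence, re-select the witnesses so that they avoid all the frontiers that could get in the way, and then thicken the sets to open sets using a sufficiently small group-neighborhood of $1_G$. The only fact about the topology of $G$ that I need beyond what is already set up is the frontier-dimension bound ``$\dim(\overline D \setminus D) < \dim D$'' for interpretable $D \subseteq G$; this transfers from the definable-manifold case (\cite[Theorem~3.5]{p-minimal-cells}, as already used in Proposition~\ref{quotop}(4)) along a manifold-domination $f : \tilde G \to G$, using that $f$ is continuous, open and surjective (so that $f^{-1}(\overline A) = \overline{f^{-1}(A)}$ for every $A \subseteq G$ and that preimages have controlled dimension).

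First I would fix a depth-$r$ ict-pattern and apply Lemma~\ref{indep-ict}: after replacing it we may assume it has the shape $(\varphi_i(x; b_{i,j}) : i < r,\ j \in \Zz)$ with the array $(b_{i,j})$ mutually $C'$-indiscernible and each row $(b_{i,j} : j \in \Zz)$ dimensionally independent over $C'$, where $C'$ is chosen large enough to define $G$, the group topology, a basis of opens, and a good neighborhood basis $\{V_t\}_{t \in \Gamma}$ of $1_G$. Put $S_{i,j} = \varphi_i(\Mm; b_{i,j})$; by indiscernibility the dimensions of $S_{i,j}$ and of $\overline{S_{i,j}} \setminus S_{i,j}$ are independent of $j$.

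Next, for $\eta : r \to \Zz$ let $P_\eta(x)$ be the partial type $\{\varphi_i(x;b_{i,\eta(i)}) : i<r\} \cup \{\neg\varphi_i(x;b_{i,j}) : i<r,\ j \ne \eta(i)\}$, which is consistent. The heart of the argument is to choose, coherently in $\eta$, a realization $a_\eta$ of $P_\eta$ with $a_\eta \notin \overline{S_{i,j}}$ for all $i$ and all $j \ne \eta(i)$; here one uses that the obstacle set $\bigcup_{i,\ j\ne\eta(i)}(\overline{S_{i,j}}\setminus S_{i,j})$ is a countable union of $b_{i,j}$-definable sets of dimension strictly below $\dim S_{i,j}$, that by Lemma~\ref{discard} a sufficiently dimensionally-generic realization of $P_\eta$ is dimensionally independent over $C'$ from all but finitely many columns in each row (and a separate, more delicate argument handles those exceptional columns, exploiting that the pattern has depth exactly $r = \dpr(G)$), and that mutual indiscernibility lets one make the whole selection uniformly, so that a single $t_0 \in \Gamma_{C'}$ witnesses $a_\eta V_{t_0} \cap S_{i,j} = \varnothing$ for every $\eta$ and every wrong $(i,j)$. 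Finally I would enlarge the base to $C'' := C' \cup \{t\}$ for a value-group element $t$ generic over the array with $t > \Gamma_{C'}$, so that $V := V_t$ is a clopen neighborhood of $1_G$ smaller than every $C'$-definable one; I would replace each $S_{i,j}$ by the open set $S'_{i,j} := S_{i,j}\cdot V = \bigcup_{s\in S_{i,j}} sV$ (a union of translates of $V$, hence uniformly interpretable over $b_{i,j}$ together with $\ulcorner V\urcorner$), and keep the $a_\eta$ from before: then $a_\eta \in S_{i,\eta(i)} \subseteq S'_{i,\eta(i)}$, while for $j \ne \eta(i)$ one gets $a_\eta \notin \overline{S'_{i,j}}$ from $\overline{S_{i,j}\cdot V} \subseteq S_{i,j}\cdot V_{t'}$ (a good-neighborhood-basis estimate, $t'$ near $t$) combined with $a_\eta V_{t_0} \cap S_{i,j} = \varnothing$ and $t > t_0$. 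Re-indexing the columns by $\omega$ via Remark~\ref{ict-remark} gives the statement.

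The main obstacle is the middle step: building one coherent family of witnesses that realize every $P_\eta$ and simultaneously avoid the countably many lower-dimensional frontiers with a gap that is uniform over all $\eta \in \omega^r$. The frontier-dimension bound disposes of any one frontier, Lemma~\ref{discard} (dimensional independence) disposes of all but finitely many columns in a given row at once, and mutual indiscernibility is what propagates the choice uniformly; reconciling the finitely many exceptional columns per row with the demand for a uniform gap is the fiddly part, and is the analogue here of the technical core of \cite[\S6]{johnson-yao}.
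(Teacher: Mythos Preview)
Your outline has the right shape---normalize via Lemma~\ref{indep-ict}, avoid frontiers using the small-boundaries inequality, then thicken by a group neighborhood---but the central dimension comparison does not close. You want a realization $a$ of $P_\eta$ with $a \notin \overline{S_{i,j}} \setminus S_{i,j}$ for $j \ne \eta(i)$, arguing from $a \dimind_{C'} b_{i,j}$ together with $\dim(\overline{S_{i,j}} \setminus S_{i,j}) < \dim S_{i,j}$. But dimensional independence only gives $\dim(a/C'b_{i,j}) = \dim(a/C')$, and nothing forces $\dim(a/C') \ge \dim S_{i,j}$: in an arbitrary ict-pattern the $S_{i,j}$ may have dimension up to $\dim G$, while any realization satisfies $\dim(a/C') \le \min_i \dim S_{i,\eta(i)}$. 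The missing step is to fix a single witness $a$ for $\eta \equiv 0$ and then \emph{shrink} each $\varphi_i$ by conjoining a $C'b_{i,0}$-interpretable set of dimension exactly $k_i := \dim(a/C'b_{i,0})$ containing $a$. After this modification $\dim S_{i,j} = k_i \le \dim(a/C')$ for all $j$ by row-indiscernibility, and the frontier bound now yields $\dim(\overline{S_{i,j}} \setminus S_{i,j}) < k_i \le \dim(a/C') = \dim(a/C'b_{i,j})$, whence $a \notin \overline{S_{i,j}}$ for the non-exceptional $j \ne 0$.

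Two smaller corrections. The ``exceptional columns'' need no delicate argument exploiting $r = \dpr(G)$: once you work with a single $a$ and transport to general $\eta$ only at the end via mutual indiscernibility, Lemma~\ref{discard} leaves finitely many bad $j \ne 0$ per row, and those columns are simply deleted (an ict-pattern survives removing finitely many columns from each row). And for the uniform separating neighborhood, indiscernibility is the wrong tool---the array is indiscernible over $C'$, not over $C'a$---and you should not expect the witness to lie in $\Gamma_{C'}$. Use saturation of $\Mm$ instead: once all $a_\eta$ are in hand, find a single open $N_0 \ni 1$ contained in every $N_{i,j,\eta}$ witnessing $a_\eta \notin \overline{S_{i,j}}$, take $N \ni 1$ open with $N = N^{-1}$ and $N \cdot N \subseteq N_0$, and set $S'_{i,j} = S_{i,j} \cdot N$; then $a_\eta \cdot N \cdot N \cap S_{i,j} = \varnothing$ gives $a_\eta \cdot N \cap S'_{i,j} = \varnothing$, so $a_\eta \notin \overline{S'_{i,j}}$.
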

\begin{proof}
  By \cite[Theorem~5.10]{admissible}, the topology on $G$ is ``admissible'', and so
  \begin{equation*}
    \dim(\overline{D} \setminus D) < \dim(D) \tag{Small boundaries property}
  \end{equation*}
  for any interpretable subset $D \subseteq G$, by
  \cite[Proposition~4.34]{admissible}.  By Lemma~\ref{indep-ict},
  there is an ict-pattern $(\psi_i(x;b_{i,j}) : i < r, ~ j \in \Zz)$
  and a set of parameters $C$ (over which $G$ is interpretable) such
  that the $b_{i,j}$ are mutually indiscernible over $C$, and each row
  is dimensionally independent over $C$.  Take some $a$ such that $\Mm \models
  \psi_i(a;b_{i,j}) \Leftrightarrow j = 0$ for all $i < r$ and $j \in \Zz$.  By
  \cite[Proposition~3.7]{gagelman} there is a formula
  $\theta_i(x;b_{i,0},c_i)$ in $\tp(a/Cb_{i,0})$ such that
  $\dim(\theta_i(x;b_{i,0},c_i)) = \dim(a/Cb_{i,0})$.  Replacing
  $b_{i,j}$ with $b_{i,j}c_i$ and replacing $\psi_i(x;b_{i,j})$ with
  $\psi_i(x;b_{i,j}) \wedge \theta_i(x;b_{i,j},c_i)$, we may assume
  that $\dim(\psi_i(x;b_{i,0})) = \dim(a/Cb_{i,0}) =: k_i$.  Let
  $V_{i,j} = \psi_i(\Mm;b_{i,j})$.  Then $\dim(V_{i,j}) =
  \dim(V_{i,0}) = k_i$ by indiscernibility.

  For each $i$, we have $a \dimind_C b_{i,j}$ for all but finitely
  many $j$, by Lemma~\ref{discard}.  Throwing away the finitely many
  bad values of $b_{i,j}$ in each row, we may assume $a \dimind_C b_{i,j}$ for
  all $j \ne 0$.  Thus $\dim(a/Cb_{i,j}) = \dim(a/C)$ for $j \ne 0$.
  By the Small Boundaries Property,
  \begin{equation*}
    \dim(\overline{V_{i,j}} \setminus V_{i,j}) < \dim(V_{i,j}) = \dim(V_{i,0}) = k_i = \dim(a/Cb_{i,0}) \le \dim(a/C) = \dim(a/Cb_{i,j}),
  \end{equation*}
  for $j \ne 0$.  Then $a$ cannot be in the $Cb_{i,j}$-interpretable set
  $\overline{V_{i,j}} \setminus V_{i,j}$.  By choice of $a$, we also
  have $a \notin V_{i,j}$.  So $a \notin \overline{V_{i,j}}$ for any
  $j \ne 0$.    Thus
  \begin{gather*}
    j = 0 \implies a \in V_{i,j} \\
    j \ne 0 \implies a \notin \overline{V_{i,j}}.
  \end{gather*}
  By mutual indiscernibility, we can find $a_\eta$ for any $\eta : r
  \to \Zz$ such that
  \begin{gather*}
    j = \eta(i) \implies a_\eta \in V_{i,j} \\
    j \ne \eta(i) \implies a_\eta \notin \overline{V_{i,j}}.
  \end{gather*}
  Recall that the topology on $G$ is a group topology, so every open
  neighborhood of $a_\eta$ has the form $a_\eta \cdot N$ for some open
  neighborhood $N$ of $1$.  For each $i,j,\eta$ with $j \ne \eta(i)$,
  we can find an open neighborhood $N_{i,j,\eta} \ni 1$ such that
  $(a_\eta \cdot N_{i,j,\eta}) \cap V_{i,j} = \varnothing$.  By
  saturation, there is an interpretable open neighborhood $N_0 \ni 1$ with $N_0
  \subseteq N_{i,j,\eta}$ for all $i,j,\eta$.  Because the topology is
  a group topology, there is a smaller interpretable open neighborhood $N \ni 1$ such
  that $N = N^{-1}$ and $N \cdot N \subseteq N_0$.

  Let $U_{i,j} = V_{i,j} \cdot N = \{x \cdot y : x \in V_{i,j}, ~ y
  \in N\}$.  Note that $U_{i,j}$ is open.  If $j \ne \eta(i)$, then
  \begin{equation*}
    (a_\eta \cdot N \cdot N) \cap V_{i,j} \subseteq a_\eta \cdot
    N_{i,j,\eta} \cap V_{i,j} = \varnothing.
  \end{equation*}
  The fact that $(a_\eta \cdot N \cdot N) \cap V_{i,j} = \varnothing$
  implies that \[(a_\eta \cdot N) \cap U_{i,j} = (a_\eta \cdot N) \cap
  (V_{i,j} \cdot N) = \varnothing.\] The neighborhood $a_\eta \cdot N$
  then shows that $a_\eta \notin \overline{U_{i,j}}$.  On the other
  hand, $1 \in N$, so $V_{i,j} \subseteq U_{i,j}$.  Therefore, if $j =
  \eta(i)$, then $a_\eta \in V_{i,j} \subseteq U_{i,j}$.  Putting
  everything together, we get
  \begin{gather*}
    j = \eta(i) \implies a_\eta \in U_{i,j} \\
    j \ne \eta(i) \implies a_\eta \notin \overline{U_{i,j}}.
  \end{gather*}
  The sets $U_{i,j}$ are uniformly interpretable, so we can find some
  formula $\varphi(x;y)$ such that each $U_{i,j}$ has the form
  $\varphi(\Mm;b_{i,j})$ for some $b_{i,j}$ (not the original ones).
  Then $(\varphi(\Mm;b_{i,j}) : i < r, ~ j < \omega)$ is the desired
  ict pattern.
\end{proof}

\section{Extensions and \textit{dfg}} \label{exts2}

Work in a highly resplendent monster model $\Mm$.  $\acl(-)$ always
means $\acl^\eq$.  All sets and parameters can come from $\Mm^\eq$ by
default.  ``Definable'' means ``interpretable.''
\begin{definition}
  A definable set $D$ is \emph{almost $A$-definable} if it is
  $\acl(A)$-definable, or equivalently, $\{\sigma(D) : \sigma \in
  \Aut(\Mm/A)\}$ is finite.  A global definable type $p$ is
  \emph{almost $A$-definable} if it is $\acl(A)$-definable, or
  equivalently, $\{\sigma(p) : \sigma \in \Aut(\Mm/A)\}$ is small.
\end{definition}
The following is folklore; see \cite[Lemma~3.13]{johnson} for a proof.
\begin{fact} \label{left-tran}
  Suppose $b$ realizes $p | A$ for some almost
  $A$-definable global type $p$.  Suppose $c$ realizes $q |
  (Ab)$ for some almost $Ab$-definable global type $q$.  Then $c$
  realizes $r | A$ for some almost $A$-definable global
  type $r$.
\end{fact}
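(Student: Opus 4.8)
The plan is to reduce the Fact to the special case in which the second type is algebraic over the first, and then to treat that case by passing to a ``fresh'' realization of $p$ and counting orbits.

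\emph{Reduction.} Since $q$ is almost $Ab$-definable it is definable over some tuple $f \in \acl(Ab)$, which we may take finite (otherwise iterate the argument below coordinate by coordinate). It suffices to prove:
$(\star)$ if $e$ is a finite tuple in $\acl(Ab)$ and $b \models p|A$ for an almost $A$-definable global type $p$, then $\tp(be/A)$ extends to an almost $A$-definable global type.
Indeed, granting $(\star)$ with $e := f$, let $\pi(x,w) \supseteq \tp(bf/A)$ be almost $A$-definable and global, and use the definition schema of $q$ over $f$ --- for each $\psi(y,z)$ an $A$-formula $\delta_\psi(z,w)$ with $\{m : \psi(y,m)\in q\} = \delta_\psi(\Mm,f)$ --- to transport $\pi$ to a global type $r$ in the variable $y$ by declaring $\psi(y,m) \in r$ iff $\delta_\psi(m,w)$ lies in the restriction of $\pi$ to the variable $w$. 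One checks routinely that $r$ is a consistent complete global type (the relevant consistency statements for $q$ are formulas over $A$, hence lie in $\tp(f/A) \subseteq \pi$), that $r|A = \tp(c/A)$ (since $\pi|A = \tp(bf/A)$ and $c \models q|Ab$), and that $r$ is $\acl(A)$-definable (since $\pi$ is and the schema is over $A$). This is the Fact.

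\emph{Proof of $(\star)$.} Work in a bigger monster $\Mm^+ \succeq \Mm$ and take $b^* \models p|\Mm$. Then $b^* \equiv_A b$, so we may pick $f^*$ with $b^*f^* \equiv_A bf$; then $f^* \in \acl(Ab^*)$ and $\tp(b^*f^*/A) = \tp(bf/A)$. I claim $\rho := \tp(b^*f^*/\Mm)$ is almost $A$-definable; since it extends $\tp(bf/A)$ this finishes $(\star)$. To bound $\{\tau(\rho) : \tau \in \Aut(\Mm/A)\}$, extend $\tau$ to $\hat\tau \in \Aut(\Mm^+/A)$, so that $\tau(\rho) = \tp(\hat\tau(b^*)\hat\tau(f^*)/\Mm)$ with $\hat\tau(b^*) \models \tau(p)|\Mm$. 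There are boundedly many possibilities for $\tau(p)$ (as $p$ is almost $A$-definable); having fixed it, pick a reference realization $b'$ of $\tau(p)|\Mm$, write $\hat\tau(b^*) = \rho_0(b')$ with $\rho_0 \in \Aut(\Mm^+/\Mm)$, and set $g := \rho_0^{-1}\hat\tau(f^*)$, so that $\tau(\rho) = \tp(b'g/\Mm)$. Since $\rho_0^{-1}\hat\tau$ fixes $A$, we get $\tp(b'g/A) = \tp(bf/A)$, so $\tp(g/Ab')$ is determined by $b'$ and $g \in \acl(Ab')$ has a fixed number $k$ of conjugates over $Ab'$; hence $\tp(g/\Mm b')$, and with it $\tau(\rho)$, is one of at most $k$ types once $b'$ is fixed. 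Thus the orbit of $\rho$ has size at most (the size of the orbit of $p$) times $k$, which is small.

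\emph{Main obstacle.} The key difficulty is that $b$ need not realize $p|\acl(A)$, so one cannot simply enlarge the base to make $p$ literally $A$-definable; this is what forces the detour through $(\star)$ and the passage to $b^* \models p|\Mm$. Once that fresh realization is available, the fact that $q$ is only \emph{almost} $Ab$-definable (rather than $Ab$-definable) costs only the bounded factor $k$ in the orbit count, and the remaining care lies in tracking that factor and in verifying that the transported type $r$ is a genuine complete type.
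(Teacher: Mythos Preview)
The paper does not prove this fact; it labels it folklore and cites an external reference (\cite[Lemma~3.13]{johnson}). So there is no in-paper argument to compare against, and I will simply evaluate your proof on its own.

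Your strategy is sound and the orbit-counting argument in $(\star)$ is the right idea, but two points deserve more care. First, the code $f$ for the definitions of $q$ is in general an infinite tuple in $\acl(Ab)$, and your parenthetical ``iterate coordinate by coordinate'' is doing real work: applying $(\star)$ successively to adjoin $f_1, f_2, \ldots$ only yields a coherent $\acl(A)$-definable global type $\pi$ in the infinite variable $w$ if the extensions $\pi_n$ are chosen compatibly. This is indeed the case if at stage $n{+}1$ you run your construction of $(\star)$ starting from a realization of $\pi_n|\Mm$ (so that $\pi_{n+1}$ restricts to $\pi_n$), but you should say so explicitly; the naive orbit bound ``$k$ conjugates'' fails outright for infinite $f^*$. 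Second, in your proof of $(\star)$ you show that $\rho = \tp(b^*f^*/\Mm)$ has small $\Aut(\Mm/A)$-orbit, and then invoke the paper's equivalence ``$\acl(A)$-definable $\iff$ small orbit'' --- but that equivalence is stated only for types already known to be \emph{definable}, and small orbit alone gives only $\acl(A)$-invariance. You should first observe that $\rho$ is definable: since $f^* \in \acl(Ab^*)$, the type $\tp(f^*/\Mm b^*)$ is isolated by some formula $\theta(w;b^*,m_0)$ with $m_0 \in \Mm$, and then $\varphi(b^*,f^*;m)$ holds iff $\forall w\,(\theta(w;b^*,m_0) \to \varphi(b^*,w;m))$, which is decided by the $p$-definition of the inner formula. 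With these two points filled in, your argument goes through.
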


\begin{definition}
  Let $G$ be an $A$-definable group.  Say that $G$ has \textit{dfg}
  \emph{over $A$} if there is a global definable type $p$ on $G$ such
  that $p$ and all its left-translates are almost $A$-definable.
\end{definition}
\begin{lemma} \label{first}
  Let $G$ be a definable \textit{dfg} group and $S$ be a definable set
  with a regular right action of $G$.  Suppose everything is
  $A$-definable, and $G$ has \textit{dfg} over $A$.  Then there is a
  global type on $S$ that is almost $A$-definable.
\end{lemma}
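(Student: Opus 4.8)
The plan is to transport the witnessing \textit{dfg} type on $G$ over to the torsor $S$ via an orbit map. The only wrinkle is that $S$ need not have a point over $\acl(A)$, so I cannot simply push $p$ forward along an $A$-definable bijection; instead I will exhibit a small, $\Aut(\Mm/A)$-invariant \emph{family} of global types on $S$ and pick any member of it.

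In detail: let $p$ be a global definable type on $G$ witnessing that $G$ has \textit{dfg} over $A$, so $p$ and all its left translates are $\acl(A)$-definable (equivalently, have small $\Aut(\Mm/A)$-orbit); in particular $p$ has boundedly many left translates. Let $P = \{\sigma(p) : \sigma \in \Aut(\Mm/A)\}$, a small, $\Aut(\Mm/A)$-invariant set of global definable types on $G$, each still with boundedly many left translates (being an automorphic image of $p$). Fix any $a \in S$; since the action is regular, $\phi_a : G \to S$, $g \mapsto a\cdot g$, is a bijection, and for a global type $r$ on $G$ and $g \in G$ write $g\cdot r$ for the left translate of $r$ by $g$ (the pushforward along $h \mapsto gh$). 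I would then set
\[ \mathfrak{S} \;=\; \{\phi_{a,*}(g\cdot r) : g \in G,\ r \in P\} \;=\; \{\phi_{a',*} r : a' \in S,\ r \in P\}, \]
the second equality because the action is transitive and $\phi_{a g} = \phi_a \circ (h \mapsto gh)$. Each member of $\mathfrak{S}$ is a pushforward of a definable type along a definable bijection, hence a global definable type on $S$, and $\mathfrak{S}$ is small since $P$ is small and each $r \in P$ has boundedly many left translates. The point of the second description is that $\mathfrak{S}$ is manifestly $\Aut(\Mm/A)$-invariant: as $S$ and the action are $A$-definable, $\sigma(\phi_{a',*} r) = \phi_{\sigma(a'),*}(\sigma(r))$ for $\sigma \in \Aut(\Mm/A)$, with $\sigma(a') \in S$ and $\sigma(r) \in P$. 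Finally $\varnothing \ne \mathfrak{S} \ni \phi_{a,*}p$, so picking any $r_0 \in \mathfrak{S}$, its $\Aut(\Mm/A)$-orbit lies inside the small set $\mathfrak{S}$, whence $r_0$ is a global definable type on $S$ that is almost $A$-definable.

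The substantive point — the ``main obstacle'' — is realizing that one must enlarge ``left translates of $p$'' to ``left translates of members of the $\Aut(\Mm/A)$-orbit of $p$'' before the transported family becomes $\Aut(\Mm/A)$-invariant: the naive candidate $\{\phi_{a',*}p : a' \in S\}$ need not be invariant, because $\sigma(p)$ need not be a left translate of $p$. Past that, the care needed is routine: keeping the left/right bookkeeping straight (the torsor $S$ is a \emph{right} $G$-set while \textit{dfg} is governed by \emph{left} translation), and noting that pushing a definable type forward along a definable bijection, and applying an automorphism, both yield definable types, so that $r_0$ is genuinely definable and not merely invariant. Fact~\ref{left-tran} is not needed here, but the setup of this lemma feeds into the later results.
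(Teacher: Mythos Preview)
Your proof is correct and follows the same strategy as the paper: push the witnessing \textit{dfg} type $p$ along orbit maps $\phi_{a'} : G \to S$, and argue that the resulting family of global definable types on $S$ is small and automorphism-invariant, so any member is almost $A$-definable.

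The only difference is that the paper sidesteps what you call the ``main obstacle.''  Rather than enlarging the family to $\{\phi_{a',*} r : a' \in S,\ r \in P\}$, the paper works with the naive family $\mathfrak{S} = \{\phi_{a',*} p : a' \in S\}$ directly and checks invariance under $\Aut(\Mm/\acl(A))$ instead of $\Aut(\Mm/A)$: since $p$ is $\acl(A)$-definable it is fixed by every such $\sigma$, so $\sigma(\phi_{a',*} p) = \phi_{\sigma(a'),*} p \in \mathfrak{S}$, and a small $\Aut(\Mm/\acl(A))$-orbit already gives almost $A$-definability (as $\acl(\acl(A)) = \acl(A)$).  Your enlargement to the full $\Aut(\Mm/A)$-orbit $P$ of $p$ works too, but is unnecessary once one makes this switch of automorphism group.
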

\begin{proof}
  For $b \in S$, let $b \cdot p$ denote the pushforward of
  the $A$-definable type $p$ along the map $x \mapsto b \cdot x$ from
  $G$ to $S$.  Note that $b \cdot p$ is a definable type on $S$.

  The set $\mathfrak{S} = \{b \cdot p : b \in S\}$ is small, because
  it is $\{b_0 \cdot g \cdot p : g \in G\}$ for any fixed $b_0 \in S$.
  If $\sigma \in \Aut(\Mm/\acl(A))$, then $\sigma$ fixes $p$ and
  $\sigma$ fixes $\mathfrak{S}$ setwise, since $\mathfrak{S}$ was
  defined in an invariant way.  Therefore any $b \cdot p$ has small
  orbit under $\Aut(\Mm/\acl(A))$, implying that $b \cdot p$ is
  almost $A$-definable.
\end{proof}
If $G$ is a $\varnothing$-definable group, let $\Mm \ltimes G$ be the new
structure obtained by adding a copy of $G$ as a new sort $S$, and
putting no structure on $S$ other than the regular right action of
$G$.  For any $g \in G$, there is an automorphism of $\Mm \ltimes G$
fixing $\Mm$ and acting as left translation by $g$ on the new sort
$S$.  In fact, $\Aut(\Mm \ltimes G) \cong \Aut(\Mm) \ltimes G$.

This construction is called ``Construction $C$'' in \cite[\S
  1]{udi-anand}, where it is attributed to Hrushovski's thesis.  It
also appears in \cite{NIPguide} above Lemma~8.19.  As mentioned in
\cite{NIPguide}, $\Mm \ltimes G$ is a conservative extension of $\Mm$,
in the sense that it introduces no new $\varnothing$-definable or definable sets
on $\Mm$.  After naming the element $1 \in S$, the two structures are
bi-interpretable.  Since we assumed $\Mm$ was very resplendent, $\Mm
\ltimes G$ will be too.
\begin{lemma} \label{second}
  Let $A \subseteq \Mm$ be a small set of parameters.  Suppose that in
  $\Mm \ltimes G$, there is a global type $p$ on $S$ that is almost
  $A$-definable.  Then $G$ has \textit{dfg} over $A$.
\end{lemma}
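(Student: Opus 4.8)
The plan is to transport the type $p$ on the new sort $S$ across the identification of $S$ with the $\Mm$-definable group $G$, and then read off a \textit{dfg} witness for $G$. Let $1 \in S$ be the point of $S$ matching the identity of $G$ under the fixed identification of $S$ with a copy of $G$, and let $\beta : S \to G$ be the resulting bijection ($\beta(s)$ is the unique $g \in G$ with $1 \cdot g = s$), whose graph is $\{1\}$-interpretable in $\Mm \ltimes G$. This $\beta$ intertwines the right $G$-action on $S$ with right multiplication on $G$, and --- reading off the decomposition $\Aut(\Mm \ltimes G) \cong \Aut(\Mm) \ltimes G$ --- it intertwines the automorphism $\lambda_g$ of $\Mm \ltimes G$ that fixes $\Mm$ and acts as left translation by $g$ on $S$ with pushforward along left multiplication by $g$ on $G$. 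I would then set $q = \beta_* p$. Since $p$ is definable, $q$ is a complete global definable type on the sort $G$; and since $G$ is a sort of $\Mm$ and $\Mm \ltimes G$ is a conservative extension of $\Mm$, the type $q$ is actually definable over $\Mm$ itself.

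Next I would check that $q$ has boundedly many left translates, each almost $A$-definable. Each $\lambda_g$ fixes $\Mm$ pointwise, hence lies in $\Aut(\Mm \ltimes G / A)$; since $p$ is almost $A$-definable its orbit under $\Aut(\Mm \ltimes G / A)$ is bounded, so $\{\lambda_g(p) : g \in G\}$ is bounded. Pushing forward along $\beta$ and using the intertwining property, $\beta_*(\lambda_g(p))$ is exactly the left translate $g \cdot q$, so $q$ has boundedly many left translates. Moreover each $\lambda_g(p)$ lies in the same $\Aut(\Mm \ltimes G / A)$-orbit as $p$, hence is itself almost $A$-definable in $\Mm \ltimes G$; transporting along $\beta$ exactly as for $q$ shows that $g \cdot q$ is almost $A$-definable over $\Mm$, and in particular (the case $g = 1$) $q$ is almost $A$-definable. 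This produces a global definable type on $G$ all of whose left translates are almost $A$-definable, which is precisely the statement that $G$ has \textit{dfg} over $A$. (This is the converse, in the special geometry of $\Mm \ltimes G$, to Lemma~\ref{first}; the extra leverage over a general $G$-torsor is that $\Mm \ltimes G$ carries the left-translation automorphisms $\lambda_g$.)

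The step I expect to require the most care is the transfer itself: that the pushforward along $\beta$ of a type definable over $\Mm \ltimes G$ is definable over $\Mm$, and that almost $A$-definability passes back and forth. This rests on conservativity of $\Mm \ltimes G$ over $\Mm$ --- an $\Mm \ltimes G$-definable subset of a product of $\Mm$-sorts is already $\Mm$-definable, so restricting defining formulas is a legitimate operation on the types in play --- together with the compatibility of $\beta$ with $\Aut(\Mm \ltimes G) \cong \Aut(\Mm) \ltimes G$ (so that extending $\sigma \in \Aut(\Mm / A)$ to the ``diagonal'' automorphism $(\sigma, 1)$, which fixes $1 \in S$, is compatible with $\beta_*$). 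Once these bookkeeping facts are pinned down, everything else is routine manipulation of pushforwards of definable types.
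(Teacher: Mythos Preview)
Your proof is correct and is essentially the paper's argument: both fix a basepoint in $S$ (you take $1$, the paper an arbitrary $b$), push $p$ forward along the resulting bijection $S \to G$, and check almost $A$-definability of every left translate by extending each $\sigma \in \Aut(\Mm/A)$ to an automorphism of $\Mm \ltimes G$ fixing the basepoint. The only cosmetic difference is that you realize $g \cdot q$ as $\beta_*(\lambda_g(p))$ via the extra automorphisms $\lambda_g$, whereas the paper realizes it as $(b g^{-1})^{-1} \cdot p$ by changing basepoint---these are the same computation.
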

\begin{proof}
  For $b, s \in S$, let $b^{-1} \cdot s$ denote the unique $x \in G$
  such that $s = b \cdot x$.  Let $b^{-1} \cdot p$ denote the
  pushforward of $p$ along the map $x \mapsto b^{-1} \cdot x$ from $S$
  to $G$.  Then $b^{-1} \cdot p$ is a definable type on $G$.  If
  $\sigma \in \Aut(\Mm/A)$, we can extend $\sigma$ to $\hat{\sigma}
  \in \Aut((\Mm \ltimes G) / A)$ fixing $b$.  Then
  \begin{equation*}
    \sigma(b^{-1} \cdot p) = \hat{\sigma}(b^{-1} \cdot p) = b^{-1} \cdot \hat{\sigma}(p).
  \end{equation*}
  There are only a small number of possibilities for
  $\hat{\sigma}(p)$, and so $b^{-1} \cdot p =: q$ is almost
  $A$-definable.

  If $g \in G$, then $g \cdot b^{-1} \cdot x = (b \cdot g^{-1})^{-1}
  \cdot x$ for $x \in S$, and so
  \begin{equation*}
    g \cdot q = g \cdot b^{-1} \cdot p = (b \cdot g^{-1})^{-1} \cdot p = (b')^{-1} \cdot p
  \end{equation*}
  for $b' = b \cdot g^{-1}$.  Replacing $b$ with $b'$ in the argument
  above, we see that $(b')^{-1} \cdot p = g \cdot q$ is almost
  $A$-definable.  In other words, every translate $g \cdot q$ of $q$
  is almost $A$-definable, showing $G$ has \textit{dfg} over $A$.
\end{proof}
Lemmas~\ref{first} and \ref{second} are formally analogous to
\cite[Lemma~8.19]{NIPguide}, replacing ``non-forking over $A$'' with
``almost $A$-definable.''
\begin{theorem} \label{dfg-ext-2}
  If $1 \to N \to G \to H \to 1$ is a short exact sequence of
  definable groups, and $N, H$ have \textit{dfg}, then $G$ has
  \textit{dfg}.
\end{theorem}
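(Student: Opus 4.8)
The plan is to produce, inside the expanded structure $\Mm \ltimes G$, a global type on the new sort $S$ that is almost $A$-definable for a suitable small $A \subseteq \Mm$, and then to quote Lemma~\ref{second}. Since ``\textit{dfg}'' already comes with a small model of definability, I would first enlarge $A$ to a small model over which $N$, $G$, $H$ and the short exact sequence are all defined, and such that $N$ and $H$ each have \textit{dfg} over $A$. After passing to $\Mm \ltimes G$, the groups $N$ and $H$ still have \textit{dfg} over $A$: the extension $\Mm \ltimes G \supseteq \Mm$ is conservative and $\Mm$ is stably embedded, so the defining scheme of a global definable type on $N$ (resp.\ $H$) over $A$ in $\Mm$ still defines a complete global type in $\Mm \ltimes G$, and the conditions ``boundedly many translates'' and ``$\acl(A)$-definable'' are unaffected.

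Next I would exploit the torsor structure. The sort $S$ is a regular right $G$-set. Since $N \trianglelefteq G$, the set of right $N$-orbits $\bar S := S/N$ is an interpretable regular right $H$-set, and the quotient map $\pi : S \to \bar S$ is $A$-interpretable with each fiber a regular right $N$-set. Apply Lemma~\ref{first} to the action of $H$ on $\bar S$: there is a global type $\bar p$ on $\bar S$ that is almost $A$-definable. Pick a realization $\bar b \models \bar p \mid A$. The fiber $F := \pi^{-1}(\bar b)$ is an $A\bar b$-interpretable regular right $N$-set, and $N$ retains \textit{dfg} over the larger base $A\bar b$. Apply Lemma~\ref{first} a second time, to the action of $N$ on $F$: there is a global type $q$ on $F$ that is almost $A\bar b$-definable. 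Pick $c \models q \mid A\bar b$; then $c \in F \subseteq S$. By Fact~\ref{left-tran}, taking $\bar p, \bar b, q, c$ in the roles of $p, b, q, c$, there is an almost $A$-definable global type $r$ with $c \models r \mid A$; as $c \in S$, this $r$ is a global type on $S$. Lemma~\ref{second} then yields that $G$ has \textit{dfg} over $A$, hence $G$ has \textit{dfg}.

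The one delicate point is the transfer of ``\textit{dfg} over $A$'' from $\Mm$ to $\Mm \ltimes G$ carried out in the first paragraph, which rests on the fact that $\Mm \ltimes G$ introduces no new definable families of subsets of the sorts of $\Mm$ (stable embeddedness). Granting that, the remainder is a routine assembly of the torsor structures on $S$, $\bar S$ and $F$ together with Lemmas~\ref{first} and \ref{second} and Fact~\ref{left-tran}, exactly parallel to the proof pattern of \cite[Lemma~8.19]{NIPguide}.
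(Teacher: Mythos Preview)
Your proposal is correct and follows essentially the same route as the paper's proof: form $\Mm \ltimes G$, pass to the $H$-torsor $S/N$, apply Lemma~\ref{first} twice (first to $H$ on $S/N$, then to $N$ on a fiber), combine via Fact~\ref{left-tran}, and conclude with Lemma~\ref{second}. The only cosmetic differences are that the paper names parameters so as to work over $\varnothing$ rather than a small $A$, and that you are more explicit about the conservativity/stable-embeddedness step (which the paper records in the paragraph preceding Lemma~\ref{second} but does not repeat inside the proof).
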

\begin{proof}
  Naming parameters, we may assume the whole sequence is $\varnothing$-definable,
  and that $N$ and $H$ have \textit{dfg} over $\varnothing$.
  Construct $\Mm \ltimes G$.  Let $S$ be the new sort with a regular
  right action of $G$.  Let $S'$ be the quotient $S/N$.  Then $S'$ has
  a regular right action by $H$.  By Lemma~\ref{first}, there is an
  almost $\varnothing$-definable global type $p$ on $S'$.  Take $b$
  realizing $p | \varnothing$.  Let $S''$ be the fiber of
  $S \to S'$ over $b \in S'$.  Then $S''$ is a $b$-definable set with
  a $b$-definable regular right action by $N$.  By Lemma~\ref{first},
  there is an almost $b$-definable global type $q$ on $S''$.  Let $c$
  realize $q | b$.  Note $c \in S$.  By
  Fact~\ref{left-tran}, there is an almost
  $\varnothing$-definable global type $r$ on $S$ such that $c$
  realizes $r | \varnothing$.  By Lemma~\ref{second}, $G$
  has \textit{dfg}.
\end{proof}
Theorem~\ref{dfg-ext-2} generalizes one direction of
Lemma~\ref{dfg-ext}.  We cannot expect the reverse direction to hold
(if $G$ has \textit{dfg}, then $N$ and $H$ have \textit{dfg}).  For
example, in $\pCF^\eq$, the short exact sequence
\begin{equation*}
  0 \to \Zz_p \to \Qq_p \to \Qq_p/\Zz_p \to 0
\end{equation*}
is a counterexample: $\Qq_p$ has \textit{dfg} but $\Zz_p$ does not.
So the use of definable Skolem functions in Lemma~\ref{dfg-ext} is
essential.

\bibliographystyle{alpha} \bibliography{references.bib}{}

\newcommand{\SortNoop}[1]{}
\begin{thebibliography}{CKDL17}

\bibitem[AGJ22]{andujar-johnson}
Pablo And\'ujar~Guerrero and Will Johnson.
\newblock Around definable types in $p$-adically closed fields.
\newblock {arXiv:2208.05815v1 [math.LO]}, 2022.

\bibitem[CKDL17]{p-minimal-cells}
Pablo Cubides-Kovacsics, Luck Darni\`ere, and Eva Leenknegt.
\newblock Topological cell decomposition and dimension theory in {$P$}-minimal
  fields.
\newblock {\em Journal of Symbolic Logic}, 82(1):347--358, 2017.

\bibitem[CP12]{cp}
Annalisa Conversano and Anand Pillay.
\newblock Connected components of definable groups and $o$-minimality {I}.
\newblock {\em Advances in Mathematics}, 231:605--623, 2012.

\bibitem[Gag05]{gagelman}
Jerry Gagelman.
\newblock Stability in geometric theories.
\newblock {\em Annals of Pure and Applied Logic}, 132:313--326, 2005.

\bibitem[HP11]{udi-anand}
Ehud Hrushovski and Anand Pillay.
\newblock On {NIP} and invariant measures.
\newblock {\em J. Eur. Math. Soc.}, 13(4):1005--1061, 2011.

\bibitem[HPP08]{HPP}
Ehud Hrushovski, Ya'acov Peterzil, and Anand Pillay.
\newblock Groups, measures, and the {NIP}.
\newblock {\em J. Amer. Math. Soc.}, 21(2):563--596, April 2008.

\bibitem[Joh18]{johnson-dpm1}
Will Johnson.
\newblock The canonical topology on dp-minimal fields.
\newblock {\em J. Math. Logic}, 18(2):1850007, 2018.

\bibitem[Joh20]{johnson}
Will Johnson.
\newblock {On the proof of elimination of imaginaries in algebraically closed
  valued fields}.
\newblock {\em Notre Dame Journal of Formal Logic}, 61(3):363 -- 381, 2020.

\bibitem[Joh21]{johnson-fsg}
Will Johnson.
\newblock A note on fsg groups in $p$-adically closed fields.
\newblock {arXiv:2108.06092v1 [math.LO]}, 2021.

\bibitem[Joh22]{admissible}
Will Johnson.
\newblock Topologizing interpretable groups in $p$-adically closed fields.
\newblock {arXiv:2205.00749v1 [math.LO]}, 2022.

\bibitem[JY22]{johnson-yao}
Will Johnson and Ningyuan Yao.
\newblock On non-compact $p$-adic definable groups.
\newblock {\em J. Symbolic Logic}, 87(1):188--213, 2022.

\bibitem[MOS20]{MOS}
Samaria Montenegro, Alf Onshuus, and Pierre Simon.
\newblock Stabilizers, {NTP}$_2$, groups with f-generics, and {PRC} fields.
\newblock {\em J. Inst. Math. Jussieu}, 19(3):821--853, 2020.

\bibitem[OP08]{onshuus-pillay}
A.~Onshuus and A.~Pillay.
\newblock Definable groups and compact $p$-adic {L}ie groups.
\newblock {\em Journal of the London Mathematical Society}, 78(1):233--247,
  2008.

\bibitem[PS17]{PS}
Y.~Peterzil and S.~Starchenko.
\newblock Topological groups, $\mu$-types and their stabilizers.
\newblock {\em J. European Math. Soc.}, 19(10):2965--2995, 2017.

\bibitem[PY16]{pillay-yao0}
Anand Pillay and Ningyuan Yao.
\newblock On minimal flows, definably amenable groups, and o-minimality.
\newblock {\em Adv. in Mathematics}, 290:483--502, February 2016.

\bibitem[PY19]{pillay-yao}
Anand Pillay and Ningyuan Yao.
\newblock Definable $f$-generic groups over $p$-adic numbers.
\newblock {arXiv:1911.01833v1 [math.LO]}, 2019.

\bibitem[Sim15]{NIPguide}
Pierre Simon.
\newblock {\em A guide to NIP theories}.
\newblock Lecture Notes in Logic. Cambridge University Press, July 2015.

\end{thebibliography}

\end{document}